\documentclass{amsart}

\usepackage{amsfonts, amsmath, amssymb, amsthm}
\usepackage{mathrsfs}
\usepackage{array, multirow}
\usepackage{tikz}
\usetikzlibrary{positioning, scopes}

\usepackage[margin=1.2in]{geometry}

\newcommand{\real}{\mathbb{R}}

\newcommand{\integer}{\mathbb{Z}}

\newtheorem{theorem}{Theorem}[section]
\newtheorem{lemma}[theorem]{Lemma}
\newtheorem{corollary}[theorem]{Corollary}

\newtheorem{proposition}[theorem]{Proposition}
\newtheorem{definition}[theorem]{Definition}

\newtheorem*{proposition*}{Proposition}

\theoremstyle{definition} 
\newtheorem{remark}[theorem]{Remark}

\begin{document}

\title[Mean curvature and closed geodesics]{Mean curvature and closed geodesics in convex hypersurfaces} 
\author[J. DIBBLE]{JAMES DIBBLE}
\address{Department of Mathematics and Statistics, University of Southern Maine, Portland, ME 04101}
\email{james.dibble@maine.edu}

\author[J.A. HOISINGTON]{JOSEPH ANSEL HOISINGTON}
\address{Department of Mathematics, Rose--Hulman Institute of Technology, Terre Haute, IN 47803}
\email{hoisingt@rose-hulman.edu}

\subjclass[2020]{Primary 52A20, 52A40, 53C45; Secondary 53C65}
\keywords{Convex hypersurfaces, mean curvature bounds, closed geodesics, mean width, Birkhoff invariant}

\date{}

\begin{abstract}
We give a sharp lower bound for the total mean curvature of a convex hypersurface in Euclidean space in terms of the length of a shortest nontrivial closed geodesic, generalizing a result of \'{A}lvarez Paiva for convex surfaces. This result is based on a sharp lower bound for the mean width of a convex hypersurface in terms of its Birkhoff invariant, which gives sharp lower bounds for a broader array of total curvature functionals. We also characterize spheres as the unique convex hypersurfaces whose planar sections containing chords of maximal length are all as long as possible. 
\end{abstract}

\maketitle


\section{Introduction}
\label{introduction}


This paper establishes the following sharp lower bound for the total mean curvature of a convex hypersurface in Euclidean space in terms of the length of its shortest closed geodesic, generalizing work of \'{A}lvarez Paiva \cite{AlvarezPaiva97} for convex surfaces, which we discuss below.  

\begin{theorem}\label{mean_curvature_theorem}
Let $M^{n}$ be a smooth, closed, convex hypersurface in $\real^{n+1}$, $n \geq 2$, $\mathcal{H}$ the mean curvature of $M$ (i.e., the average of its principal curvatures), $\Lambda(M)$ the length of a shortest nontrivial closed geodesic in $M$, and $\sigma_{n}$ the surface area of the unit $n$-sphere. Then, 
\begin{align}\label{mean_curvature_thm_eqn}
\displaystyle \int_{M} \mathcal{H}^{n-1} dA \geq \frac{\sigma_{n}}{2\pi}  \Lambda(M). 
\end{align}
Equality holds if and only if $M$ is a round sphere. 
\end{theorem}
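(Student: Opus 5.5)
The plan is to split the inequality into two pieces, connected through the mean width $w(M)$, the average over unit directions $u$ of the width $h(u)+h(-u)$, where $h$ is the support function of the convex body $K$ bounded by $M$. The first piece is the statement announced in the abstract: a sharp lower bound for the mean width in terms of the Birkhoff invariant $\beta(M)$, namely $w(M)\geq \tfrac{2}{\pi}\beta(M)$, with equality exactly for round spheres. The second piece is the classical inequality $\Lambda(M)\leq \beta(M)$, due to Birkhoff minimax: a sweepout of $M$ by planar sections in parallel hyperplanes gives closed curves of length at most $\beta(M)$, and a minimax argument produces a closed geodesic of length at most $\beta(M)$. It then remains to relate $\int_M \mathcal{H}^{n-1}\,dA$ to $w(M)$.

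\textbf{Step 1 (mean width as total curvature).} Parametrize $M$ by the inverse Gauss map $\nu^{-1}:S^n\to M$. Then $dA = (\det d\nu)^{-1} d\sigma = \frac{1}{\kappa_1\cdots\kappa_n}\,d\sigma$. The reciprocals $1/\kappa_i$ are the principal radii of curvature, which are the eigenvalues of $\nabla^2 h + h\,I$ on $S^n$. Hence
\begin{align*}
\int_M \sigma_{n-1}(\kappa)\,dA=\int_{S^n}\frac{\sigma_{n-1}(\kappa)}{\kappa_1\cdots\kappa_n}\,d\sigma=\int_{S^n}\sum_i \frac{1}{\kappa_i}\,d\sigma=\int_{S^n}(\Delta h+nh)\,d\sigma=n\int_{S^n}h\,d\sigma .
\end{align*}
Since $\int_{S^n} h\,d\sigma = \tfrac{\sigma_n}{2}w(M)$, this gives $\int_M \sigma_{n-1}(\kappa)\,dA=\tfrac{n\sigma_n}{2}w(M)$.

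\textbf{Step 2 (Maclaurin).} Maclaurin's inequality gives $\mathcal{H}^{n-1}\geq \sigma_{n-1}(\kappa)/n$, with equality precisely at umbilic points. Combining with Step 1,
\begin{align*}
\int_M \mathcal{H}^{n-1}\,dA\geq \frac{\sigma_n}{2}\,w(M)\geq \frac{\sigma_n}{\pi}\,\beta(M)\cdot\frac{1}{1}\cdot\frac{1}{1}.
\end{align*}
The normalization must be checked against the unit sphere, where the left side equals $\sigma_n$, $w=2$, and $\beta=\Lambda=2\pi$. With $w\geq \tfrac{2}{\pi}\beta$ the chain reads $\sigma_n\geq \sigma_n\cdot\tfrac{2}{\pi}\cdot 2\pi\cdot\tfrac12\cdot\tfrac{1}{2}$, so the constants must be fixed so that equality holds there. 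The correct form is $\int_M \mathcal{H}^{n-1}\,dA\geq \tfrac{\sigma_n}{2}w(M)\geq \tfrac{\sigma_n}{2\pi}\beta(M)\geq \tfrac{\sigma_n}{2\pi}\Lambda(M)$, using the mean-width bound in the form $w(M)\geq \beta(M)/\pi$. This is consistent at the sphere: $w=2$ and $\beta/\pi=2$.

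\textbf{Step 3 (main obstacle: the mean-width bound).} The key step is $w(M)\geq \beta(M)/\pi$. I would first try the following. For each unit direction $u$, consider the sweepout by hyperplane sections orthogonal to $u$. Each section is a convex curve (for $n=2$), or more generally bounds a convex set in which one picks planar loops, and by Cauchy's formula its perimeter is $\pi$ times the mean width of the section. I would average over sweepouts and use an integral-geometric (Crofton/Cauchy-type) identity that expresses the average over 2-planes of the mean width of projections as $w(M)$. The goal is to choose a sweepout family of 2-planar sections, for instance all sections by 2-planes containing a fixed maximal chord, whose lengths are controlled by $\pi$ times the width of the projection onto that plane, and then bound $\beta$ by a maximum that is in turn controlled by an average. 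Making a maximum be bounded by an average is the delicate point. I would exploit the fact that a section by a plane through a diameter chord has perimeter at most $\pi\cdot$(its width in that plane)$\leq \pi\cdot$(the relevant width of $M$). The equality case analysis (all such sections maximal forces a sphere, as in the abstract's characterization) then yields rigidity.

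\textbf{Step 4 (equality).} Equality in Step 2 forces every point to be umbilic, so $M$ is a round sphere. Conversely, round spheres attain equality, by the computation in Step 2.
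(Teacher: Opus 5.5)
The inequality chain in Steps 1--2 is exactly the paper's reduction: the identity $\Xi(M)=\frac{2}{\sigma_n}\int_M S_{n-1}(\lambda)\,dA$, Maclaurin, $\Lambda(M)\le\beta(M)$, and $\Xi(M)\ge\beta(M)/\pi$. Here the constant is the corrected $1/\pi$, not the $2/\pi$ you first wrote. For $n\ge 3$ your equality argument is also valid, and it is the one noted in Remark~\ref{main_thm_remark}. The gap is Step 4 for $n=2$. There $\mathcal{H}^{n-1}=\mathcal{H}=S_1(\lambda)=S_{n-1}(\lambda)$ identically, so the Maclaurin step is an identity and says nothing about umbilicity.

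For surfaces, equality in the theorem forces $\Lambda(M)=\beta(M)=\pi\,\Xi(M)$. Characterizing equality in $\Xi(M)\ge\beta(M)/\pi$ is the main content of the paper, and your sketch does not supply it. The paper proceeds as follows:
\begin{itemize}
\item It approximates $M$ by strictly convex $M_k$. Uniform convergence of $\Pi\mapsto L(\partial\phi_\Pi(M_k))$ and $\beta(M_k)\to\beta(M)$ then give $L(\partial\phi_\Pi(M))=\beta(M)$ for every $\Pi$.
\item It inverts the spherical Radon transform on even Lipschitz functions (Lemma~\ref{nearly_constant_width_lemma}) to get constant width $\beta(M)/\pi$.
\item It uses the stability Lemma~\ref{width stability} and a limiting argument to show that every diametrical slice has length $\pi w^{+}(M)$.
\item It applies Theorem~\ref{slicing property implies round}. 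That proof identifies diametrical slices with geodesics via the $C^{1,1}$ geodesic equation, which makes them Zindler curves and hence circles by Theorem~\ref{besicovitch_smith_thm}.
\end{itemize}
Your parenthetical ``all such sections maximal forces a sphere'' states the conclusion of that theorem. It neither proves it nor derives its hypothesis from equality.

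Step 3 also leaves $\Xi(M)\ge\beta(M)/\pi$ unproved. The family you suggest, sections by $2$-planes through a fixed maximal chord, is not a sweepout. None of its loops is constant, so it defines no class in $\pi_{n-1}(\Omega(M),\Omega^{0}(M))$. Moreover, the bound it would give, $\pi w^{+}(M)$, is too weak, since $w^{+}(M)\ge\Xi(M)$.

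The ``maximum versus average'' difficulty disappears once you compare pointwise in $\Pi$. Take $M$ strictly convex and fix $\Pi\in\mathrm{Gr}(2,n+1)$. Sweep out $M$ by the parallel sections $\Pi_x\cap M$, $x\in\phi_{\Pi^\perp}(M)\cong B^{n-1}$. Each section projects isometrically into the region bounded by $\partial\phi_\Pi(M)$. Corollary~\ref{crofton_corollary} then gives $\beta(M)\le\max_x L(\Pi_x\cap M)\le L(\partial\phi_\Pi(M))$ for every $\Pi$. The left side does not depend on $\Pi$, so integrating over $\mathrm{Gr}(2,n+1)$ and applying Lemma~\ref{integral_geometric_formula} yields $\beta(M)\le\pi\,\Xi(M)$.

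A smooth convex hypersurface need not be strictly convex; a $2$-plane can meet a flat piece of $M$ in a $2$-dimensional set. The general case therefore follows by approximation, using the Hausdorff continuity of $\Xi$ and $\beta$ (Lemmas~\ref{width_continuity} and~\ref{birkhoff continuity}).
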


\noindent The proof of Theorem \ref{mean_curvature_theorem} is based on the following sharp lower bound for the mean width of a convex hypersurface.  

\begin{theorem}\label{mean_width_theorem}
Let $M^{n}$ be a closed, convex hypersurface in $\real^{n+1}$, $n \geq 2$, $\Xi(M)$ its mean width, as in Definition \ref{width_defn}, and $\beta(M)$ its Birkhoff invariant, as in Definition \ref{birkhoff invariant}. Then,
\begin{align}\label{mean_width_thm_eqn}
\displaystyle \Xi(M) \geq \frac{1}{\pi} \beta(M).  
\end{align}
Equality holds if and only if $M$ is a round sphere. 
\end{theorem}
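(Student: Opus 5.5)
The plan is to compare both sides of \eqref{mean_width_thm_eqn} with the perimeters of the orthogonal projections of $M$ onto $2$-planes. Let $K$ be the convex body bounded by $M$ and $h$ its support function, so the width in direction $u$ is $w(u)=h(u)+h(-u)$ and $\Xi(M)$ is the average of $w$ over $S^n$. I am assuming Definition \ref{birkhoff invariant} is a min-max over sweepouts of $M$ by closed curves. If it is phrased differently, only the first step below changes.

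\emph{Step 1: $\beta(M)$ is at most every projected perimeter.} Fix a $2$-plane $P\subset\real^{n+1}$ and let $\pi_P$ be the orthogonal projection onto $P$. For $v\in P^{\perp}$ the sections $M\cap(P+v)$ are planar closed convex curves (or points), and as $v$ ranges over $\pi_{P^\perp}(K)$ they form an admissible sweepout of $M$. Each such section bounds a convex set in $P+v$ whose translate into $P$ lies inside $\pi_P(K)$. Perimeter is monotone under inclusion of planar convex sets, so every curve in the family has length at most $L(P):=\operatorname{per}(\pi_P(K))$. Hence
\[
\beta(M)\le L(P)\quad\text{for every } P .
\]

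\emph{Step 2: the average of $L(P)$ is $\pi\,\Xi(M)$.} By Cauchy's formula in the plane, $L(P)=\int_0^{2\pi}h(\cos\theta\, e_1+\sin\theta\, e_2)\,d\theta$ for an orthonormal basis $e_1,e_2$ of $P$, using $h_{\pi_P K}=h|_{P}$. This equals $\int_0^{\pi}w\,d\theta$, i.e.\ $\pi$ times the average of $w$ over the great circle $S^n\cap P$. Averaging over the Grassmannian $G(2,n+1)$ with its invariant probability measure reproduces the uniform average over $S^n$. Therefore $\int_{G(2,n+1)}L(P)\,dP=\pi\,\Xi(M)$.

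Combining the two steps, $\beta(M)\le\min_P L(P)\le \pi\,\Xi(M)$, which is \eqref{mean_width_thm_eqn}. For a round sphere of radius $r$, every projection is a disk of perimeter $2\pi r$ and the equatorial circles give $\beta=2\pi r=\pi\,\Xi$, so equality holds there.

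\emph{Main obstacle: rigidity in the equality case.} If equality holds, then $L(P)$ is constant in $P$, equal to $\beta(M)$. Moreover, for every $P$ the optimal sweepout must contain a section whose length equals the full projected perimeter. By the equality case of perimeter monotonicity, that section projects onto all of $\pi_P(K)$, so it is a planar section whose projection is the entire shadow of $K$ in direction $P$. I expect to exploit this as follows. Choose $P$ to contain a chord of maximal length (a diameter) of $K$. Using the existence of such maximal sections for all $P$, deduce that all planar sections through maximal chords have the largest possible length. Then invoke the characterization of spheres announced in the abstract, presumably proved in a later section, to conclude that $M$ is a round sphere. Making this step rigorous means tracking the equality cases through the min-max, which may require a compactness argument for nearly optimal sweepouts. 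That is where I expect the main difficulty to lie.
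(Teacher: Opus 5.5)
Your proof of the inequality is the paper's argument: parallel-slice sweepout, perimeter monotonicity as in Corollary~\ref{crofton_corollary}, and averaging over the Grassmannian via Lemma~\ref{integral_geometric_formula}. However, Step 1 is only valid when $M$ is strictly convex. If $v$ lies on the boundary of $\phi_{P^\perp}(K)$, the plane $P+v$ misses the interior of $K$. Then $M\cap(P+v)$ can be a segment or a $2$-dimensional convex region (take a cube with $P$ parallel to a face). So your family need not send the boundary sphere to constant loops, and it is not a sweepout. The paper proves \eqref{mean_width_pf_eqn_1} only for strictly convex $M$ and reaches general $M$ by approximation. That step needs the Hausdorff continuity of $\beta$ (Lemma~\ref{birkhoff continuity}), which is a genuine lemma you would have to supply. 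The same issue affects your equality analysis: for general $M$ you cannot assert that some parallel slice of $M$ has length $L(\partial\phi_P(M))$. What survives by approximation is $L(\partial\phi_P(M))\ge\beta(M)$ for all $P$. Together with the averaging identity, this still gives $L(\partial\phi_P(M))=\beta(M)$ for every $P$.

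The essential gap is the next step. Your diameter step can be completed. The supporting hyperplanes orthogonal to a diametrical segment $xy$ meet $M$ only at $x$ and at $y$, so a parallel slice whose support function is $h_M|_P$ must contain both. But this only shows that a diametrical slice coincides with the corresponding shadow. Its length is then $\beta(M)=\pi\,\Xi(M)\le\pi w^{+}(M)$, with equality if and only if $M$ has constant width. ``Slice equals shadow'' is far from ``long'': on a prolate spheroid, every plane through the major axis cuts out exactly the shadow, yet these sections are shorter than $\pi w^{+}$.

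The missing idea is the Minkowski-type rigidity of Lemma~\ref{nearly_constant_width_lemma}. Since $L(\partial\phi_P(M))=\tfrac12\int_{S^n\cap P}w(M,\cdot)$ and the spherical Radon transform is injective on even functions, $L\equiv\beta(M)$ forces $w(M,\cdot)\equiv\beta(M)/\pi$. Once you have constant width, $M$ is strictly convex and your slice sweepout is legitimate for $M$ itself. Your approach then closes more simply than the paper's limit of maximal slices of approximants $M_k$. Some slice parallel to $\Pi$ has length $\pi w^{+}(M)$, so it has constant width $w^{+}(M)$ by Lemma~\ref{width stability}. It therefore contains the unique diametrical segment parallel to $y-x$, so it is $\Pi\cap M$, and Theorem~\ref{slicing property implies round} applies. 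No compactness argument for near-optimal sweepouts is needed anywhere.

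Finally, for the round sphere the equatorial circles only give $\beta\le 2\pi r$. The reverse inequality needs a separate argument, e.g.\ Birkhoff's theorem that $\beta$ is the length of a closed geodesic.
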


\noindent Theorem \ref{mean_width_theorem} is connected to Theorem \ref{mean_curvature_theorem} by two facts: the mean width of a smooth convex hypersurface can be calculated in terms of its principal curvatures (see \cite{Schneider1993} and \eqref{width_curvature_identity} below); and, by an argument due to Birkhoff, a smooth, closed, convex hypersurface contains a closed geodesic whose length is equal to its Birkhoff invariant (\cite{Birkhoff1917}, see also \cite{Birkhoff1927,Croke1988}). By means of these results, the proof of Theorem \ref{mean_width_theorem} gives sharp lower bounds for a broader class of total curvature functionals in terms of the minimum length of a nontrivial closed geodesic in smooth convex hypersurfaces, recorded in Proposition \ref{main_curvature_prop}.

The proof of the equality conditions in Theorems \ref{mean_curvature_theorem} and \ref{mean_width_theorem} is based on what is, to the best of the authors' knowledge, a novel characterization of spheres among convex hypersurfaces: they are the unique closed, convex hypersurfaces whose intersections with $2$-planes containing diametrically opposed points are all as long as possible. 

\begin{theorem}\label{slicing property implies round}
Let $M^n$ be a closed, convex hypersurface in $\real^{n+1}$, $n \geq 2$, and $w^{+}(M)$ its extrinsic diameter (as in Definition \ref{width_defn}). Suppose every affine $2$-plane containing points $x,y \in M$ with $|y-x| = w^{+}(M)$ intersects $M$ in a curve of length $\pi w^{+}(M)$. Then, $M$ is a round sphere.
\end{theorem}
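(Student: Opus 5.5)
Write $D = w^{+}(M)$ and fix $x,y \in M$ with $|y-x| = D$. Let $K$ be the compact convex body bounded by $M$. The plan is to show that $K$ is the ball with diameter $[x,y]$, i.e. the ball $B$ of radius $D/2$ centered at $c = (x+y)/2$.

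\textbf{Step 1: $K \subseteq B$.} Since $x,y$ realize the diameter, every point $z \in K$ satisfies $|z-x| \le D$ and $|z-y| \le D$. The hyperplanes through $x$ and through $y$ perpendicular to $y-x$ are supporting hyperplanes of $K$: if $z \in K$ had $\langle z - x, y-x\rangle < 0$, then $|z-y| > D$. So $K$ lies in the slab between these two hyperplanes.

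Now take any affine $2$-plane $P$ containing the line $xy$. The section $K \cap P$ is a planar convex set whose diameter is exactly $D$, attained by $x$ and $y$. By hypothesis its boundary curve $M \cap P$ has length $\pi D$. Planar convex sets satisfy perimeter $\le \pi \cdot \mathrm{diameter}$, and this is where I would put most of the care. One route is Cauchy's formula: perimeter $= \int_0^\pi w(\theta)\,d\theta \le \pi D$. Equality forces constant width $D$ in $P$.

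\textbf{Step 2: each section is a disk.} A planar set of constant width $D$ whose diameter is realized by $x,y$ need not yet be a disk, so I use more structure. Because $K \cap P$ has constant width $D$, every boundary point $z$ has an opposite point at distance $D$. In particular every boundary point of $K \cap P$ lies on a diametral chord of $K$ of length $D$.

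This gives new diametral pairs $(z,z')$ in $M$. The hypothesis therefore applies to every $2$-plane through the line $zz'$, not just to planes through $xy$. Iterating, every point of $M$ (each lies in some plane through $xy$) is an endpoint of a diameter. Moreover, every $2$-plane through any diameter cuts $K$ in a constant-width-$D$ set.

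\textbf{Step 3: $K$ has constant width $D$ and is a ball.} From Step 2, for every direction $u$ there are diametral chords, and widths in plane sections through diameters equal $D$. I would use this to show that the width of $K$ in every direction is $D$.

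Take a direction $u$ and any diameter $[z,z']$. The plane spanned by $z z'$ and $u$ gives a section of constant width $D$. Its width in direction $u$ is at most the width of $K$ in direction $u$, which is at most $D$. Hence the width of $K$ in direction $u$ equals $D$, so $K$ is a body of constant width $D$ in $\real^{n+1}$.

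Then every section of $K$ through a diameter has constant width. The remaining task is to conclude that $K$ is a ball. I expect this to be the main obstacle: higher-dimensional constant-width bodies, such as Meissner-type bodies, are not balls.

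For this I would use the following extra fact. For a constant-width body, the diametral chord through a boundary point $z$ is normal to $M$ at $z$, so the chord $[x,y]$ is normal at both ends. In each plane $P$ containing $[x,y]$, consider the constant-width section. Its support function $h_P$ satisfies $h_P(\theta) + h_P(\theta+\pi) = D$. I would aim to show that the midpoints of all diameters coincide, equal to $c$. Then every boundary point is at distance $D/2$ from $c$, so $M$ is a sphere.

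To get the common midpoint, I would use that in a constant-width body any two diameters intersect: two disjoint diametral chords normal to the boundary would contradict width $D$ in the plane they span, or by a short argument in $\mathrm{span}$ of the two chords. Combined with the section hypothesis applied to planes containing two diameters, all diameters in a section through $[x,y]$ pass through one point. I would establish that point is $c$ by symmetry of the chord, and then propagate this to all of $M$ using Step 2.
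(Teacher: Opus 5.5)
Your Steps 1--2 and the first half of Step 3 are correct and reproduce Lemmas~\ref{width stability} and~\ref{constant width}: every diametrical slice has constant width $D$, and so does $M$. Despite the headings, however, Steps 1 and 2 establish neither $K\subseteq B$ nor that the sections are disks.

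The argument breaks exactly at the obstacle you identified, in two places.
\begin{itemize}
\item \emph{Diameters need not intersect.} It is false that any two diameters of a constant-width body in $\real^{n+1}$, $n\ge 2$, intersect. Two diameters can be skew, so there is no plane they span in which to run the planar argument. For instance, rotate a Reuleaux triangle $ABC$ about its symmetry axis through $A$, and let $B'\neq B,C$ lie on the circle traced by $B$. Then the diameters $[B,C]$ and $[A,B']$ are disjoint.
\item \emph{Pairwise intersection does not give concurrency.} Even within one planar section, the diameters of a Reuleaux triangle pairwise intersect but are not concurrent. Applying the hypothesis to the plane through two intersecting diameters only tells you again that this section has constant width $D$, and Reuleaux-type sections satisfy that.
\end{itemize}
Concurrency of all diameters of a planar constant-width curve is equivalent to the curve being a circle. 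So the sentence ``all diameters in a section through $[x,y]$ pass through one point'' assumes the heart of the theorem, and nothing in the proposal excludes non-circular constant-width slices.

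What is missing is an argument that uses how the different slices through a common diameter fit together. The paper does this intrinsically, in the $C^{1,1}$ case:
\begin{itemize}
\item It shows the diametrical slices are exactly the geodesics (Lemma~\ref{slices are closed geodesics}, via Theorem~\ref{lytchak_theorem} and Lemma~\ref{acceleration normal to hypersurface}).
\item Rotating the slice plane about the line through $x$ and $\alpha(x)$ is a variation through geodesics with fixed endpoints, so every half-slice has length $\pi D/2$.
\item Hence every diametral chord of every slice bisects its perimeter. Each slice is then a Zindler curve of constant width, so a circle by Corollary~\ref{zindler_corollary}, and these circles are concentric.
\end{itemize}
The general case is reduced to the $C^{1,1}$ one via outer parallel hypersurfaces.

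If you prefer to stay elementary, the information you never exploited is that a slice of constant width $D$ satisfies $h_{\Pi\cap M}=h_M|_{\Pi}$. For $C^1$ $M$, this says the normal of $M$ at each point of a diametrical slice is parallel to the slice plane. Consequently the rotation fields about a diametral line are tangent to $M$, so $M$ is rotationally symmetric about every normal line. The centroid then lies on all normal lines, and $M$ is a sphere; the same parallel-surface reduction handles non-smooth $M$. As written, however, Step 3 does not prove the theorem.
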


\noindent Theorem \ref{slicing property implies round} complements rigidity results for convex hypersurfaces discussed in \cite[Chapter 9]{MartiniMontejanoOliveros2019}. For example, Montejano \cite{Montejano1991} proved that every convex hypersurface whose $k$-sections through a point all have constant width must be round, and S\"{u}ss \cite{Suss1947} showed that, for $n = 2$, only the sphere has congruent planar sections in each direction. 

\'{A}lvarez Paiva proved the inequality in Theorem \ref{mean_curvature_theorem} for $n=2$ in \cite{AlvarezPaiva97}. The proof there can be adapted to prove the inequality in Theorem \ref{mean_width_theorem} in all dimensions. A non-sharp version of the inequality in Theorem \ref{mean_curvature_theorem} for $n=2$ also follows from Minkowski's inequality between the mean curvature of a convex surface and its area \cite{Minkowski1903} and results of Treibergs and Croke bounding the area of a convex surface below in terms of the length of its shortest closed geodesic \cite{Treibergs1985,Croke1988}. None of these arguments or results, however, gives a description of the equality conditions in Theorems \ref{mean_curvature_theorem} and \ref{mean_width_theorem}, even for smooth convex surfaces in $\real^{3}$. The equality conditions in Theorems \ref{mean_curvature_theorem} and \ref{mean_width_theorem} admit alternate proofs under some additional assumptions, which we discuss in Remarks \ref{blaschke_remark} and \ref{main_thm_remark}, but these draw on results such as the proof of the Blaschke conjecture \cite{Besse1978}. It therefore seems that the characterization of equality in \'{A}lvarez Paiva's result, as in Theorems \ref{mean_curvature_theorem} and \ref{mean_width_theorem}, requires an argument like the one given here. 

Theorem \ref{mean_curvature_theorem} is similar in spirit to results of Willmore and Chen \cite{Willmore1965,Chen1971b} that give lower bounds for the integral of the $n$-th power of the mean curvature of a hypersurface in $\real^{n+1}$. For hypersurfaces of dimension $n \geq 3$, however, the authors do not know of general results bounding the integral of the mean curvature itself from below in terms of the length of the shortest closed geodesic or similar geometric invariants, with the exception of a result for convex $3$-spheres discussed in Remark \ref{main_thm_remark}. Theorem \ref{mean_width_theorem} gives a lower bound for the mean width of a smooth convex hypersurface in terms of the length of the shortest closed geodesic, recorded in \eqref{mcp_pf_eqn_1}, because of Birkhoff's result cited above, but the Birkhoff invariant of a general convex hypersurface does not seem to admit a geometrically natural representative---generic convex polyhedra, for example, have no simple closed geodesics \cite{Gruber1991, Galperin2003}. In this sense, Theorem \ref{mean_width_theorem} seems to be the simplest geometrically natural statement of its type. 



\subsection*{Outline of the Paper} In Section \ref{variation_section}, we establish versions of the first variation formula and geodesic equation in $C^{1,1}$ hypersurfaces. These are used in the proof of Theorem \ref{slicing property implies round}, which is given in Section \ref{slices_section}. Theorem \ref{mean_width_theorem} is proved in Section \ref{mean_width_section}, and Theorem \ref{mean_curvature_theorem} is proved in Section \ref{curvature_section}. The proofs of Theorems \ref{mean_width_theorem} and \ref{slicing property implies round} draw on several elementary results from convex geometry. For completeness, we include these results and their proofs in Sections \ref{slices_section} and \ref{mean_width_section}. 

\subsection*{Acknowledgments} The authors are grateful to Matteo Raffaelli for comments on a draft of this paper and Andreas Bernig for input about the references.


\section{The geodesic equation for $C^{1,1}$ hypersurfaces}
\label{variation_section}


The proof of Theorem \ref{slicing property implies round} uses a form of the geodesic equation satisfied by geodesics in $C^{1,1}$ hypersurfaces in $\real^{n+1}$. This is recorded in Lemma \ref{acceleration normal to hypersurface}. We begin with some basic definitions and formulas. The \textbf{length} $L(\gamma)$ of a continuous path $\gamma : [a,b] \to \real^{n+1}$, which may be infinite, is $\sup \Big\{ \sum_{i = 1}^m |\gamma(x_i) - \gamma(x_{i-1})| \;\Big|\; a = x_0 < x_1 < \cdots < x_m = b \Big\}$. A path is \textbf{rectifiable} if its length is finite, in which case its unit-speed parameterization is Lipschitz continuous and, consequently, absolutely continuous. When $\gamma$ is absolutely continuous, its length is $\int_a^b |\gamma'(t)| dt$, and its energy $E(\gamma)$ is $\frac{1}{2} \int_a^b |\gamma'(t)|^2 dt$. These are related by the Cauchy--Schwarz inequality: $L(\gamma)^2 \leq 2(b-a) E(\gamma)$, with equality if and only if $\gamma$ has constant speed. The \textbf{distance} between two points in a connected $C^{1,1}$ hypersurface $M$ in $\real^{n+1}$ is the infimum of the lengths of paths in $M$ connecting them. 

\begin{lemma}[First variation formula]\label{first variation formula}
Let $V : [c,d] \times [a,b] \to \real^{n+1}$ be a $C^{1,1}$ map, and for each $s \in [c,d]$, define the path $\gamma_s : [a,b] \to \real^{n+1}$ by $\gamma_s(t) = V(s,t)$. Then, for almost all $s_0$,
\begin{equation}\label{fvf}
\frac{d}{ds}\Big|_{s=s_0} E(\gamma_s) = \langle W(s_0,t), \gamma_{s_0}'(t) \rangle \Big|_a^b - \int_a^b \langle W(s_0,t), \gamma_{s_0}''(t) \rangle \,dt
\end{equation}
where $W = \frac{\partial V}{\partial s}$ is the variation field of $V$, $\gamma_{s_0}''$ is defined for almost all $t$ and essentially bounded, and the boundary difference $\langle W(s_0,t), \gamma_{s_0}'(t) \rangle \Big|_a^b = \langle W(s_0,b), \gamma_{s_0}'(b) \rangle - \langle W(s_0,a), \gamma_{s_0}'(a) \rangle$ uses one-sided derivatives of $\gamma$ at the endpoints.
\end{lemma}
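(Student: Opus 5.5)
The plan is to differentiate under the integral sign, then integrate by parts in $t$. Since $V$ is $C^{1,1}$, the partial derivatives $V_s = W$ and $V_t$ are Lipschitz on $[c,d]\times[a,b]$. By Rademacher's theorem they are differentiable almost everywhere, with essentially bounded derivatives. The cleanest route is to avoid pointwise differentiation in $s$ and work with the integrated identity instead.

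First, fix $s_1<s_2$. For each $t$, the map $s\mapsto V_t(s,t)$ is Lipschitz, hence absolutely continuous, and its derivative $\partial_s V_t$ exists for a.e.\ $s$. Then
\begin{align*}
E(\gamma_{s_2})-E(\gamma_{s_1}) = \int_a^b \int_{s_1}^{s_2} \langle \partial_s V_t, V_t\rangle \, ds\, dt .
\end{align*}
Fubini applies because the integrand is bounded and measurable. So $s\mapsto E(\gamma_s)$ is Lipschitz, and for a.e.\ $s_0$ (Lebesgue points)
\begin{align*}
\frac{d}{ds}\Big|_{s_0}E(\gamma_s)=\int_a^b\langle \partial_s V_t(s_0,t),V_t(s_0,t)\rangle\,dt .
\end{align*}

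Second, the key step is to identify $\partial_s V_t$ with $\partial_t W$ almost everywhere. The distributional mixed partials of the $C^{1}$ map $V$ agree. Both $\partial_s V_t$ and $\partial_t V_s$ exist a.e.\ as classical partials of Lipschitz functions, and these coincide with the distributional derivatives, which are $L^\infty$. Hence $\partial_s V_t=\partial_t W$ a.e.\ on the rectangle. By Fubini, for a.e.\ $s_0$ this equality holds for a.e.\ $t$. In addition, for such $s_0$:
\begin{itemize}
\item[(i)] $t\mapsto W(s_0,t)$ is Lipschitz with derivative $\partial_t W(s_0,\cdot)$;
\item[(ii)] $\gamma_{s_0}' = V_t(s_0,\cdot)$ is Lipschitz, so $\gamma_{s_0}''$ exists a.e.\ and is essentially bounded.
\end{itemize}
Item (ii) in fact holds for every $s_0$.

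Third, for such $s_0$ the integrand becomes $\langle \partial_t W(s_0,t),\gamma_{s_0}'(t)\rangle$. The product of the two absolutely continuous functions $W(s_0,\cdot)$ and $\gamma_{s_0}'$ is absolutely continuous, with derivative $\langle \partial_tW,\gamma'\rangle+\langle W,\gamma''\rangle$ a.e. The fundamental theorem of calculus for absolutely continuous functions therefore yields \eqref{fvf}. The boundary values are the continuous extensions of $V_t$ at $t=a,b$, i.e.\ the one-sided derivatives.

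The main obstacle is the second step, interchanging the order of differentiation for a map that is only $C^{1,1}$. To handle it, I would first try mollifying $V$: for smooth $V_\epsilon$ the formula is classical. Then pass to the limit, using that $\partial_s\partial_t V_\epsilon\to\partial_s V_t$ in $L^1$ while remaining uniformly bounded. This limit argument also settles the a.e.\ statements above.
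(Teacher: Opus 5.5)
Your proof is correct, but it handles both analytic steps differently from the paper. The paper applies Rademacher's theorem to the Lipschitz map $DV$, so $V$ is twice differentiable at almost every point of the rectangle. At each $s_0$ for which this holds at $(s_0,t)$ for a.e.\ $t$, it differentiates $E(\gamma_s)$ under the integral sign by dominated convergence. It then swaps the mixed partials pointwise by Young's theorem, which gives $\frac{\partial^2 V}{\partial s\partial t}=\frac{\partial^2 V}{\partial t\partial s}$ wherever $V_s$ and $V_t$ are differentiable. You instead use one-variable absolute continuity and Fubini to write $E(\gamma_{s_2})-E(\gamma_{s_1})$ as a double integral, and read off the derivative at Lebesgue points. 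You get the symmetry of mixed partials as an a.e.\ identity of $L^\infty$ functions: distributional derivatives commute, and the a.e.\ classical partials of a Lipschitz function are its weak partials. The paper's route is shorter and names the good set of $s_0$ explicitly, at the cost of the two-variable Rademacher theorem and Young's theorem. Yours needs no pointwise second-order differentiability of $V$. As a by-product it shows $s\mapsto E(\gamma_s)$ is Lipschitz with the formula as its a.e.\ derivative. That is exactly what Lemma \ref{acceleration normal to hypersurface} needs to pass from an a.e.\ bound on $\frac{dE}{ds}$ to $E(\gamma_{\varepsilon,s})<E(\gamma)$. The paper leaves this implicit, though it follows directly from $V_t$ being bounded and Lipschitz. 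Your closing mollification paragraph is unnecessary, since the distributional argument in your second step already settles the interchange. Mollifying would also require extending $V$ past the rectangle or working on interior subrectangles.
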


\begin{proof} Since $V$ is $C^{1,1}$, Rademacher's theorem implies it is twice differentiable almost everywhere on $[c,d] \times [a,b]$; consequently, for almost all $s_0$, $V$ is twice differentiable at $(s_0,t)$ for almost all $t$. By the dominated convergence theorem, at each such $s_0$, one has
\[
\frac{d}{ds} \Big|_{s = s_0} E(\gamma_s) = \frac{1}{2} \frac{d}{ds} \Big|_{s = s_0} \int_a^b  |\gamma_s'(t)|^2 \,dt = \int_a^b \langle \frac{\partial^2 V}{\partial s \partial t}(s_0,t), \gamma_{s_0}'(t) \rangle \,dt\textrm{.}
\]
Young's theorem states that, at each point where $V$ is twice differentiable, $\frac{\partial^2 V}{\partial s \partial t} = \frac{\partial^2 V}{\partial t \partial s}$. Thus,
\begin{align*}
\frac{d}{ds} \Big|_{s=s_0} E(\gamma_s) &= \int_a^b \langle \frac{\partial^2 V}{\partial t \partial s}(s_0,t), \gamma_{s_0}'(t) \rangle \,dt\\
&= \int_a^b \frac{d}{d\tau} \Big|_{\tau = t} \langle \frac{\partial V}{\partial s}(s_0,\tau), \gamma_{s_0}'(\tau) \rangle \,dt - \int_a^b \langle \frac{\partial V}{\partial s}(s_0,t), \gamma_{s_0}''(t) \rangle \,dt\\
&= \langle W(s_0,t), \gamma_{s_0}'(t) \rangle \Big|_a^b - \int_a^b \langle W(s_0,t), \gamma_{s_0}''(t) \rangle \,dt\textrm{,}
\end{align*}
where the final equality follows from absolute continuity. \end{proof}

\noindent A \textbf{geodesic} in $M$ is a constant-speed path $\gamma : [a,b] \to M$ that is locally length-minimizing and, therefore, realizes the distance between nearby points. It is \textbf{minimal} if its length is the distance between its endpoints. Lytchak showed that, in embedded $C^{1,\alpha}$ submanifolds of smooth manifolds, geodesics are as regular as the submanifold itself.

\begin{theorem}[{\cite[Theorem 1.2]{Lytchak2005}}]\label{lytchak_theorem}
Let $0 < \alpha \leq 1$. If $M$ is an isometrically embedded $C^{1,\alpha}$ submanifold of a smooth Riemannian manifold, then geodesics in $M$ are $C^{1,\alpha}$.
\end{theorem}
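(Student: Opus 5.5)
The statement is a cited result (Lytchak, Theorem 1.2), so rather than reprove it in full generality I would sketch how one establishes it in the case actually used here: $M$ a $C^{1,\alpha}$ hypersurface (or submanifold) of $\real^{n+1}$. The plan is to show first that a geodesic $\gamma$ in $M$, parameterized by arc length, is $C^{1}$ with a tangent direction, and then that the tangent direction is $\alpha$-H\"older. Locally, write $M$ as the graph of a $C^{1,\alpha}$ function $f$ over its tangent plane $T_pM$ at a point $p$, so that the induced metric is $g_{ij} = \delta_{ij} + \partial_i f \,\partial_j f$, a $C^{0,\alpha}$ Riemannian metric on a domain in $\real^n$. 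The problem then reduces to showing that length-minimizers of a $C^{0,\alpha}$ metric that arises from a $C^{1,\alpha}$ embedding are $C^{1,\alpha}$.

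The key step is a quantitative comparison between intrinsic and extrinsic geometry at small scales. Since the unit normal (or tangent plane field) is $\alpha$-H\"older, for $x,y \in M$ with $|x-y| = r$ small the intrinsic distance satisfies $d_M(x,y) \le r(1 + C r^{2\alpha})$: take the path in $M$ lying over the chord in the graph chart, whose length exceeds $r$ by an amount controlled by $\int |\nabla f|^2$ along the chord, and $|\nabla f| = O(r^\alpha)$ there. So a minimal geodesic segment of length $\ell$ has endpoints at Euclidean distance at least $\ell(1 - C\ell^{2\alpha})$. A rectifiable curve with this property, whose length barely exceeds its chord, must be close to the chord, and by elementary geometry its average direction over any subinterval of length $\ell$ is within $C\ell^{\alpha}$ of the chord direction. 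Applying this on dyadic subintervals and summing a geometric series shows that the unit tangent $\gamma'$ exists everywhere and satisfies $|\gamma'(s) - \gamma'(t)| \le C|s-t|^\alpha$. That is exactly the assertion that $\gamma$ is $C^{1,\alpha}$.

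For a general smooth ambient Riemannian manifold the same argument runs in normal coordinates, where the ambient metric is Euclidean up to $O(r^2)$ errors, and these are dominated by the $r^{2\alpha}$ term because $\alpha \le 1$.

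I expect the main obstacle to be the ``short excess implies small angular deviation'' step. It must be done at every scale uniformly, and the conversion from length excess $\ell^{1+2\alpha}$ to direction deviation $\ell^{\alpha}$ has to be sharp to get the exponent $\alpha$ rather than something weaker. The tool I would use is the Pythagorean-type estimate that a curve of length $\ell(1+\epsilon)$ has midpoint within $\ell\sqrt{\epsilon}$ of the chord, applied recursively on halves. If that recursion loses too much, I would instead cite Lytchak's argument directly, since the paper uses the result only as a black box.
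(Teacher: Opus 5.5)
The paper gives no proof of this statement. It is quoted from Lytchak \cite{Lytchak2005} and used as a black box, only for $\alpha=1$ and $M$ a hypersurface of $\real^{n+1}$, to get $\gamma''$ bounded a.e.\ in Lemmas~\ref{acceleration normal to hypersurface} and~\ref{slices are closed geodesics}. Your sketch is a sound, self-contained alternative, and it uses the right mechanism. Comparing intrinsic distance with \emph{extrinsic} chords gives excess $O(r^{1+2\alpha})$. Treating the induced metric as an abstract $C^{0,\alpha}$ metric and freezing coefficients would give only $O(r^{1+\alpha})$, and hence only exponent $\alpha/2$ by the same scheme; so your reduction must keep the phrase ``arising from a $C^{1,\alpha}$ embedding.''

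The step you flag as the main risk is not lossy. For a unit-speed curve on $I=[a,b]$ of length $\ell$ with unit chord direction $e_I$, one has exactly
\[
\frac{1}{\ell}\int_I |\gamma'-e_I|^2\,ds \;=\; 2\Big(1-\frac{|\gamma(b)-\gamma(a)|}{\ell}\Big)\;\le\; 2C\ell^{2\alpha}.
\]
This is a Campanato condition on $\gamma'$, which for $0<\alpha\le 1$ gives $\gamma'\in C^{0,\alpha}$, hence $\gamma\in C^{1,\alpha}$ since $\gamma$ is Lipschitz. Your midpoint (ellipse) recursion is the hands-on version of the same fact: the deviations $C(2^{-k}\ell)^{\alpha}$ sum to $C\ell^{\alpha}$ because $\alpha>0$.

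When you write it out, tighten the following points:
\begin{itemize}
\item Take the graph chart over $T_xM$ at the basepoint $x$ of each chord, with $C^{1,\alpha}$ bounds uniform on a compact piece of $M$. Over a fixed $T_pM$, the bound $|\nabla f|=O(r^\alpha)$ along the chord fails in general.
\item State the key estimate as a comparison of chord directions of nested subintervals, as your recursion does. Comparing the ``average direction'' of an interval with its own chord is trivially $O(\ell^{2\alpha})$ and carries no information across scales.
\item Check convergence of chord directions at all scales, not just dyadic ones, and check that the one-sided derivatives agree, for instance by applying the midpoint estimate on $[t-s,t+s]$.
\item In the Riemannian case, center the normal coordinates at the basepoint of each chord. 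Comparing directions at different points then costs an extra $O(r)\le O(r^\alpha)$, which is the second place $\alpha\le 1$ is used.
\end{itemize}

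The citation buys the paper full generality (any codimension, any smooth ambient manifold) at no cost. Your argument buys transparency about why the exponent is exactly $\alpha$, and in the one case the paper actually needs it is short and elementary.
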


\begin{remark}
Geodesics in manifolds with $C^{0,1}$ Riemannian metrics are $C^{1,1}$ \cite{LytchakYaman2006}, but without a sufficiently regular immersion into an ambient smooth manifold, many important properties are lost. For example, with $C^1$ metrics, the initial-value problem for the geodesic equation might fail to have unique solutions \cite{Hartman1950} and solutions to the geodesic equation might not realize the distance between nearby points \cite{HartmanWintner1951}. 
\end{remark}

\begin{lemma}\label{velocity normal to acceleration}
Let $M^n$ be a $C^{1,1}$ hypersurface in $\real^{n+1}$. If $\gamma : [a,b] \to M$ is any constant-speed $C^{1,1}$ path, then, for almost all $t \in [a,b]$, $\gamma''(t)$ exists and is orthogonal to $\gamma'(t)$.
\end{lemma}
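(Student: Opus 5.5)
The idea is to differentiate the constant speed condition $|\gamma'(t)|^2 = c^2$. Since $\gamma$ is $C^{1,1}$, the derivative $\gamma'$ is Lipschitz on $[a,b]$. By Rademacher's theorem (in one variable, equivalently because Lipschitz functions are absolutely continuous), $\gamma''(t)$ exists for almost all $t$ and is essentially bounded by the Lipschitz constant of $\gamma'$. This gives the first assertion. Note that we need not use that $M$ is a hypersurface for this step; it only enters through the hypothesis that $\gamma$ is a constant-speed path, so that $\gamma'(t)$ is defined everywhere and $|\gamma'(t)|$ is constant.

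Next, fix any $t \in (a,b)$ at which $\gamma'$ is differentiable. The function $f(\tau) = \langle \gamma'(\tau), \gamma'(\tau)\rangle$ is then differentiable at $\tau = t$, by the product rule applied to the inner product. Since $\gamma'$ is differentiable at $t$, we have $\gamma'(t+h) = \gamma'(t) + h\gamma''(t) + o(h)$, and expanding the inner product gives
\[
f'(t) = 2\langle \gamma''(t), \gamma'(t)\rangle .
\]
But $f$ is identically equal to the constant $c^2$, where $c$ is the speed of $\gamma$, so $f'(t) = 0$. Hence $\langle \gamma''(t), \gamma'(t) \rangle = 0$ at every such $t$, which is almost every $t \in [a,b]$.

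The only point requiring any care is interpreting "constant speed" pointwise. If constant speed were only meant in the sense that $|\gamma'(t)| = c$ almost everywhere, continuity of $\gamma'$ (which holds since $\gamma$ is $C^{1,1}$) upgrades this to every $t$, so that $f$ is genuinely constant and the differentiation above is legitimate. I expect no real obstacle: the argument is the classical one, with Rademacher's theorem replacing the assumption of twice differentiability.
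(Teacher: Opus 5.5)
Your proof is correct and is essentially the paper's own argument: since $\gamma'$ is Lipschitz, $\gamma''(t)$ exists for almost all $t$, and differentiating the constant function $|\gamma'|^2$ at such $t$ gives $2\langle \gamma''(t), \gamma'(t)\rangle = 0$. Your remark that the hypersurface $M$ plays no role in the argument is also accurate.
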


\begin{proof} Since $|\gamma'|^2$ is constant and $\gamma'$ is Lipschitz, for almost all $t$, $\frac{d}{dt}|\gamma'|^2 = 2\langle \gamma''(t), \gamma'(t) \rangle = 0$. \end{proof}

\begin{lemma}\label{acceleration normal to hypersurface}
Let $M^n$ be a $C^{1,1}$ hypersurface in $\real^{n+1}$. If $\gamma : [a,b] \to M$ is any geodesic, then, for almost all $t \in [a,b]$, $\gamma''(t)$ exists and is orthogonal to $M$.
\end{lemma}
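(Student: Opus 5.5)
By Theorem \ref{lytchak_theorem}, the geodesic $\gamma$ is $C^{1,1}$, and by Rademacher's theorem $\gamma''(t)$ exists for almost all $t$. It therefore suffices to show that, for almost every $t$, $\gamma''(t)$ is orthogonal to $T_{\gamma(t)}M$. The strategy is to run the first variation formula (Lemma \ref{first variation formula}) with variations that stay inside $M$ and fix the endpoints. Since $\gamma$ locally minimizes length and has constant speed, Cauchy--Schwarz shows that $\gamma$ also minimizes energy among paths in $M$ with the same endpoints on short subintervals. So $s=0$ is a minimum of $E(\gamma_s)$, and the energy derivative should vanish there.

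\textbf{Step 1: localize.} Restrict to a short subinterval $[a',b']$ on which $\gamma$ is minimal and whose image lies in a single chart. In that chart, write $M$ near $\gamma(t)$ as a $C^{1,1}$ graph over its tangent hyperplane, or equivalently use the $C^{1,1}$ nearest-point projection $\pi$ onto $M$, defined on a tubular neighborhood of radius below the reach; a $C^{1,1}$ hypersurface has positive reach locally.

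\textbf{Step 2: build the variation.} Take a smooth vector field $X:[a',b']\to\real^{n+1}$ vanishing at the endpoints, and set
\[
V(s,t)=\pi\bigl(\gamma(t)+sX(t)\bigr).
\]
This map is $C^{1,1}$, stays in $M$, and fixes the endpoints. Its variation field at $s=0$ is $W(0,t)=d\pi_{\gamma(t)}X(t)=P_t X(t)$, where $P_t$ is orthogonal projection onto $T_{\gamma(t)}M$.

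\textbf{Step 3: extract the derivative.} The main technical obstacle is that Lemma \ref{first variation formula} only gives the derivative at almost every $s_0$, not at $s=0$ itself. To handle this, observe that $s\mapsto E(\gamma_s)$ is $C^{1}$, indeed Lipschitz in its derivative. The integrand $\langle \partial_s\partial_t V,\partial_t V\rangle$ depends continuously on $s$ in $L^1$: the second derivatives are bounded and converge almost everywhere as $s\to0$, since $\partial_t V(s,t)=d\pi_{\gamma+sX}(\gamma'+sX')$ is Lipschitz jointly. We then pass to the limit in the formula along $s_0\to0$. Alternatively, we can avoid this by computing directly at $s=0$. Differentiating under the integral sign gives
\[
\frac{d}{ds}\Big|_{s=0}E=\int\langle \partial_t(P_tX),\gamma'\rangle\,dt,
\]
which is legitimate because the difference quotients of $\partial_tV$ are uniformly bounded. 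Integrating by parts using the absolute continuity of $\langle P_tX,\gamma'\rangle$ and the vanishing boundary terms then yields $-\int\langle P_tX,\gamma''\rangle\,dt$.

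\textbf{Step 4: conclude.} Minimality of energy gives $\int_{a'}^{b'}\langle X, P_t\gamma''(t)\rangle\,dt=0$ for every smooth compactly supported $X$. Since $P_t\gamma''$ is in $L^\infty$, it vanishes for almost every $t$; that is, $\gamma''(t)\perp T_{\gamma(t)}M$ almost everywhere. Covering $[a,b]$ by finitely many such subintervals finishes the proof.
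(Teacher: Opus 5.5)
Your strategy is viable and genuinely different from the paper's: you derive the weak geodesic equation $\int\langle X,P_t\gamma''\rangle\,dt=0$ from energy minimality and conclude by the fundamental lemma. However, the justification at exactly the step you flag as the main obstacle is wrong as written.

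First, the nearest-point projection onto a $C^{1,1}$ hypersurface is in general only Lipschitz off $M$, not $C^{1,1}$. Take a planar curve made of a line segment followed by a unit circular arc (or the cylinder over it). Across the normal line through the junction, $D\pi$ jumps from $P$ to $(1-h)^{-1}P$, where $h$ is the distance from the curve on the concave side. So $V(s,t)=\pi(\gamma(t)+sX(t))$ is not $C^{1,1}$, $\gamma_s$ need not be $C^1$ for $s\neq 0$, $\partial_tV$ is not jointly Lipschitz, and Lemma~\ref{first variation formula} does not apply.

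Second, even for a genuinely $C^{1,1}$ variation, your claim that the second derivatives converge almost everywhere as $s\to 0$ is unsupported. They involve $\nabla^2 f$, which is merely $L^\infty$, evaluated along the moving curves. Uniformly bounded difference quotients give compactness, not convergence, so they do not by themselves license differentiating under the integral sign.

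The repair is short. Use the graph chart $\Phi(x)=(x,f(x))$ and set $\gamma_s=\Phi(\psi+sY)$, where $\psi$ gives the chart coordinates of $\gamma$ and $Y$ is smooth with compact support. Then integrate by parts \emph{before} letting $s\to 0$. With $Q_s=(\gamma_s-\gamma)/s$, which is Lipschitz and vanishes at the endpoints, one has for every $s\neq 0$
\[
\frac{E(\gamma_s)-E(\gamma)}{s}=-\int\langle Q_s,\gamma''\rangle\,dt+\frac{s}{2}\int|\partial_tQ_s|^2\,dt .
\]
Because $\nabla f$ is Lipschitz, $|\partial_tQ_s|$ is bounded uniformly in $s$. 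Also $Q_s\to D\Phi(\psi)Y$ pointwise and boundedly. Hence the derivative at $s=0$ exists and equals $-\int\langle D\Phi(\psi)Y,\gamma''\rangle\,dt$. Your Step 4 then applies verbatim, since $D\Phi(\psi(t))$ maps onto $T_{\gamma(t)}M$. The projection version can be rescued the same way, but only after proving $\|D\pi(y)-P_{\pi(y)}\|\le C\,\mathrm{dist}(y,M)$, which your uniform bound on difference quotients tacitly uses.

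By comparison, the paper argues by contradiction at a Lebesgue point $t_0$ where the tangential part $w$ of $\gamma''$ is nonzero. It uses a tent-shaped variation of height and half-width $\varepsilon$ in graph coordinates, and applies Lemma~\ref{first variation formula} on each half to bound $\frac{d}{ds}E\le\varepsilon^2(C\varepsilon-|w|^2/2)$ for almost every small $s$. Integrating in $s$ then gives a strict energy decrease without ever identifying the derivative at $s=0$.

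Your repaired argument trades that local, quantitative construction for a global one. It yields the full weak geodesic equation for all test fields at once and needs no Lebesgue-point selection or explicit constants. In the difference-quotient form above, it does not even require Lemma~\ref{first variation formula}.
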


\begin{proof} By Theorem \ref{lytchak_theorem}, $\gamma$ is $C^{1,1}$. Suppose first that $\gamma$ is minimal. By Lemma \ref{velocity normal to acceleration}, $\langle \gamma'',\gamma' \rangle = 0$ almost everywhere. If $\gamma''$ is not orthogonal to $M$ on a set of positive measure, then one can pick a Lebesgue point $t_0$ of the intersection of that set and the set on which $\langle \gamma'',\gamma' \rangle = 0$. Let $w \neq 0$ be the projection of $\gamma''(t_0)$ onto $T_{\gamma(t_0)} M$. In $C^{1,1}$ graph coordinates $(x,f(x))$ around $p = \gamma(t_0)$, where $f : T_p M \to \real$, one has that $f(0) = 0$, $\nabla f(0) = 0$, and $\nabla f$ is Lipschitz continuous. Consequently, $\nabla^2 f$ exists almost everywhere and is essentially bounded. Following a standard ``rounding the corner'' argument, one may construct a piecewise $C^{1,1}$ variation $V_\varepsilon$ of $\gamma$ that pushes it in the direction of $w$ at time $t_0$ using a piecewise linear spike function with height $\varepsilon$ and support inside an interval of width $2\varepsilon$ centered at $t_0$. Bounding the two summands on the right-hand side of \eqref{fvf} separately on each half of $V_\varepsilon$, one may show that, for all sufficiently small $\varepsilon$ and almost all sufficiently small $s$, $\frac{d E(\gamma_{\varepsilon,s})}{ds} \leq \varepsilon^2 ( C \varepsilon - \frac{|w|^2}{2} )$, where $\gamma_{\varepsilon,s}(\cdot) = V_\varepsilon(s,\cdot)$ and $C$ is a constant that depends only on $|w|$, an upper bound for $|\gamma'|$, and a Lipschitz constant for $\nabla f$. Thus, $E(\gamma_{\varepsilon,s}) < E(\gamma)$ for all sufficiently small $\varepsilon$ and $s$. By Cauchy--Schwarz, $L(\gamma_{\varepsilon,s}) < L(\gamma)$, a contradiction. In the general case, $[a,b]$ can be partitioned into finitely many subintervals on which $\gamma$ is minimal; since the result holds on each subinterval, it holds overall. \end{proof}


\section{Convex hypersurfaces with long diametrical slices}
\label{slices_section}


Any $2$-plane containing a pair of antipodal points in a sphere of radius $r$ slices the sphere in a circle of length $2\pi r$. It follows from Crofton's formula (Theorem \ref{Crofton's formula} below) that this is the longest possible plane section of a convex hypersurface of extrinsic diameter $2r$. The primary aim of this section is to prove Theorem \ref{slicing property implies round}, which says that this property characterizes the sphere. 


\subsection{Width and Diametrical Slices} 

In this subsection, we introduce several definitions, survey elementary results from convex geometry, and prove Lemma \ref{slices are closed geodesics}, which describes the geodesics in $C^{1,1}$ convex hypersurfaces satisfying the hypotheses of Theorem \ref{slicing property implies round}. The results developed here are used in the proof of Theorem \ref{slicing property implies round} and, in turn, Theorems \ref{mean_curvature_theorem} and \ref{mean_width_theorem}.

\begin{definition}\label{width_defn}
Let $M^n$ be a closed, convex hypersurface in $\real^{n+1}$. For each $u \in S^n$, the \textbf{width of $M$ in the direction of $u$}, denoted $w(M,u)$, is the length of the orthogonal projection of $M$ onto the line spanned by $u$. Equivalently, it is the distance between the two supporting hyperplanes to $M$ orthogonal to $u$. The \textbf{maximum width of $M$} is $w^{+}(M) = \max_{u \in S^n} w(M,u)$, the \textbf{minimum width of $M$} is $w^{-}(M) = \min_{u \in S^n} w(M,u)$, and the \textbf{mean width of $M$} is
\begin{align}
\displaystyle \Xi(M) = \frac{1}{\sigma_n} \int_{S^n} w(M,u)\,du. 
\end{align}
\end{definition}

\noindent It follows immediately from the definition that the width function $w(M,\cdot) : S^n \to [0,\infty)$ is even, i.e., $w(M,-u) = w(M,u)$. If $\Pi$ is an affine subspace of $\real^{n+1}$ that contains a vector parallel to $u \in S^n$, and if $\Pi \cap M \neq \emptyset$, then 
\begin{equation}\label{slice_width_inequality}
\displaystyle w(\Pi \cap M, u) \leq w(M, u). 
\end{equation}

\noindent For planar convex curves, Crofton's formula states that mean width is equivalent to length. 

\begin{theorem}[Crofton's formula \cite{Schneider1993}]\label{Crofton's formula}
Let $\kappa$ be a closed, convex, planar curve. Then, 
\[
L(\kappa) = \pi \Xi(\kappa). 
\]
\end{theorem}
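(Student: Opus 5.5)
The statement is Crofton's formula: for a closed convex planar curve $\kappa$, $L(\kappa) = \pi \Xi(\kappa)$. Here $\Xi(\kappa)$ is the average width over $S^1$, so $\Xi(\kappa) = \frac{1}{2\pi}\int_{S^1} w(\kappa,u)\,du$. The plan is to prove it first for convex polygons, where both sides can be computed directly, and then pass to general convex curves by approximation.

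First take $\kappa$ to be a convex polygon with edges $e_1,\dots,e_m$. For a unit vector $u$, the orthogonal projection of $\kappa$ onto the line spanned by $u$ is covered exactly twice by the projections of the edges. This holds because a generic line orthogonal to $u$ that meets the interior of the projection meets $\kappa$ in exactly two points, by convexity. Hence
\[
w(\kappa,u) = \frac{1}{2}\sum_{i=1}^m |\langle e_i, u\rangle|,
\]
where $e_i$ also denotes the edge vector. Since $\int_{S^1} |\langle v,u\rangle|\,du = 4|v|$ for every $v\in\real^2$, integrating over $S^1$ gives
\[
\int_{S^1} w(\kappa,u)\,du = 2\sum_i |e_i| = 2L(\kappa).
\]
Dividing by $2\pi$ yields $\Xi(\kappa) = L(\kappa)/\pi$.

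For a general closed convex curve $\kappa$, take inscribed convex polygons $P_k$ whose vertices lie on $\kappa$ and have mesh tending to zero. There are two convergence facts to check.

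\textbf{Lengths.} $L(P_k)\to L(\kappa)$. By the definition of length as a supremum over partitions, $L(P_k)$ increases to $L(\kappa)$ along refining partitions. Convex curves are rectifiable, since their length is bounded by the perimeter of any enclosing square.

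\textbf{Widths.} $w(P_k,u)\to w(\kappa,u)$ uniformly in $u$. The width is the support function $h(u)+h(-u)$, and support functions are 1-Lipschitz with respect to Hausdorff distance. Since $P_k\to\kappa$ in Hausdorff distance, the widths converge uniformly.

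Integrating then gives $\Xi(P_k)\to\Xi(\kappa)$, and the identity passes to the limit.

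I expect the main obstacle to be the double-covering claim in the polygon step. A line orthogonal to $u$ can contain an entire edge, namely one parallel to that line. Such edges project to single points, so they contribute measure zero to the projection and contribute $0$ to the sum, since $\langle e_i,u\rangle = 0$. The counting therefore holds for all but finitely many lines orthogonal to $u$, which suffices. A degenerate (segment) curve can be handled the same way, as a two-edge polygon.
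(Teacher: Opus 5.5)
The paper gives no proof of this statement; it is quoted from \cite{Schneider1993}, so there is no internal argument to match. Your proof is correct and self-contained.

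Your argument is the planar case of the classical polytope-plus-continuity proof of Cauchy's projection formula. It has three ingredients: polygon double covering, with edges parallel to $u^\perp$ contributing $0$ and a segment treated as a two-edge polygon (which reproduces the paper's convention of counting it twice); the identity $\int_{S^1}|\langle v,u\rangle|\,du=4|v|$; and a limit through inscribed polygons. The usual alternatives are $L=\int_0^{2\pi}(h+h'')\,d\theta=\int_0^{2\pi}h\,d\theta$ for $C^2$ curves, or $L=2V(K,B)=\int_{S^1}h_K\,du$ via mixed areas. These need smoothness plus an approximation, or the mixed-volume machinery, though they are the ones that extend to identities like \eqref{width_curvature_identity}. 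Your route needs only the paper's own definitions. Its one continuity input, Lemma~\ref{width_continuity}(a)--(b), is proved without Crofton's formula, so nothing is circular.

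Two small refinements:
\begin{enumerate}
\item Do not justify rectifiability by comparison with an enclosing square. Monotonicity of perimeter under inclusion is Corollary~\ref{crofton_corollary}, which the paper derives \emph{from} this theorem. You do not need it: $L(P_k)\to L(\kappa)$ holds in $[0,\infty]$ while $L(P_k)=\pi\Xi(P_k)\le\pi w^{+}(\kappa)$, so finiteness is automatic.
\item That convergence comes from mesh $\to0$, not from refinement alone. Inserting the points of a fixed partition $Q$ raises $L(P_k)$ by at most $2\,\#Q$ times the largest diameter of an arc of $\kappa$ between consecutive vertices of $P_k$. This bound tends to $0$ by uniform continuity.
\end{enumerate}
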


\noindent This implies that convex curves of constant width $w$ have length $\pi w$, a result also known as Barbier's theorem. The following inequality for the lengths of nested convex curves is also an immediate corollary of Crofton's formula.

\begin{corollary}\label{crofton_corollary}
Let $\kappa_1$ and $\kappa_2$ be closed, convex, planar curves, and suppose $\kappa_1$ lies in the region bounded by $\kappa_2$. Then $L(\kappa_1) \leq L(\kappa_2)$, with equality if and only if $\kappa_1 = \kappa_2$.
\end{corollary}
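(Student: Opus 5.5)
The inequality follows directly from Crofton's formula (Theorem \ref{Crofton's formula}) applied to both curves, once we show $w(\kappa_1,u)\le w(\kappa_2,u)$ for every direction $u$ in the plane; the equality case then reduces to a comparison of width functions.

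\textbf{Step 1: widths are monotone.} Let $K_2$ be the closed convex region bounded by $\kappa_2$. Since $\kappa_1 \subset K_2$, its orthogonal projection onto the line spanned by $u$ lies inside the projection of $K_2$. The projection of $K_2$ equals the projection of $\kappa_2$, because the extreme points of a compact convex set lie on its boundary. Hence $w(\kappa_1,u)\le w(\kappa_2,u)$ for all $u\in S^1$.

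\textbf{Step 2: integrate.} Integrating over $S^1$ gives $\Xi(\kappa_1)\le\Xi(\kappa_2)$. Crofton's formula then yields $L(\kappa_1)=\pi\Xi(\kappa_1)\le \pi\Xi(\kappa_2)=L(\kappa_2)$.

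\textbf{Step 3: equality.} This is the only step requiring care. If $L(\kappa_1)=L(\kappa_2)$, then $\int_{S^1}\bigl(w(\kappa_2,u)-w(\kappa_1,u)\bigr)\,du=0$. The integrand is nonnegative and continuous, since width functions of compact convex sets are continuous. Therefore $w(\kappa_1,u)=w(\kappa_2,u)$ for every $u$.

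It does not suffice to compare widths alone; we need the support functions. Let $K_1$ be the region bounded by $\kappa_1$, and let $h_i(u)=\max_{x\in K_i}\langle x,u\rangle$. Containment $K_1\subset K_2$ gives $h_1\le h_2$, and so also $h_1(-u)\le h_2(-u)$. Writing $w_i(u)=h_i(u)+h_i(-u)$, the equality $w_1(u)=w_2(u)$ forces $h_1(u)=h_2(u)$ for all $u$. A compact convex set is the intersection of its supporting half-planes, so $K_1=K_2$, and hence their boundaries agree: $\kappa_1=\kappa_2$.

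The converse, that $\kappa_1=\kappa_2$ implies equal lengths, is trivial.
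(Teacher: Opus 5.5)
Your proof is correct and is the argument the paper has in mind when it calls this an immediate corollary of Crofton's formula: widths are monotone under containment, and integrating then gives the inequality. For the equality case you use the same device the paper applies just before Lemma \ref{width stability}, namely that $h_1\le h_2$ together with $h_1(u)+h_1(-u)=h_2(u)+h_2(-u)$ forces $h_1=h_2$.
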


\noindent Crofton's formula also gives the following formula for mean width.

\begin{lemma}\label{integral_geometric_formula}
Let $M^{n}$ be a closed, convex hypersurface in $\real^{n+1}$, $\mathrm{Gr}(2,n+1)$ the Grassmannian of unoriented $2$-planes in $\real^{n+1}$, and $d\Pi$ the $SO(n+1)$-invariant measure on $\mathrm{Gr}(2,n+1)$. For each $\Pi \in \mathrm{Gr}(2,n+1)$, let $\phi_{\Pi}$ denote orthogonal projection onto $\Pi$. Then, 
\begin{align}\label{int_geom_form_eqn}
\displaystyle \Xi(M) = \int_{\mathrm{Gr}(2,n+1)} \Xi(\partial \phi_{\Pi}(M)) \ d\Pi = \frac{1}{\pi} \int_{\mathrm{Gr}(2,n+1)} L\left(\partial \phi_{\Pi}(M)\right) \ d\Pi. 
\end{align}
\end{lemma}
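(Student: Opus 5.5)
We prove the lemma by averaging the width function in two stages: first over directions inside a fixed $2$-plane, then over all $2$-planes. Throughout, $\mathrm{Gr}(2,n+1)$ carries the invariant probability measure $d\Pi$; the formula as stated forces this normalization, as one sees by taking $M$ to be a round sphere.

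\textbf{Step 1: widths of projections.} The first step is the elementary observation that widths are unchanged by projection onto a plane containing the direction. For $\Pi \in \mathrm{Gr}(2,n+1)$ and a unit vector $u \in \Pi$, orthogonal projection onto the line spanned by $u$ factors through $\phi_\Pi$. Hence
\[
w(M,u) = w(\phi_\Pi(M),u) = w(\partial\phi_\Pi(M),u).
\]
The second equality holds because $\phi_\Pi(M)$ is a compact convex planar set, and it has the same projections as its boundary curve. We will assume $\phi_\Pi(M)$ has nonempty interior, so that its boundary is a closed convex curve. This holds for every $\Pi$, since a closed convex hypersurface bounds a convex body with interior.

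\textbf{Step 2: the mean width of the slice curve.} Applying Definition \ref{width_defn} inside $\Pi$, with $S^1_\Pi$ the unit circle of $\Pi$, gives
\[
\Xi(\partial\phi_\Pi(M)) = \frac{1}{2\pi}\int_{S^1_\Pi} w(M,u)\,du .
\]

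\textbf{Step 3: averaging over planes.} Next we integrate over $\mathrm{Gr}(2,n+1)$ and use the fibration of pairs. Consider the flag space
\[
F = \{(\Pi,u) : u \in S^1_\Pi\}.
\]
It carries an $SO(n+1)$-invariant measure, namely $d\Pi$ on the base times arclength on each fiber $S^1_\Pi$. The projection $F \to S^n$, $(\Pi,u)\mapsto u$, is $SO(n+1)$-equivariant, and $SO(n+1)$ acts transitively on $S^n$. So the pushforward of the invariant measure on $F$ is invariant on $S^n$, hence equal to $c\,du$ for some constant $c$. By the equivariance, this is just the uniqueness of invariant measures on homogeneous spaces (a double-fibration, Fubini-type argument). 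Therefore
\[
\int_{\mathrm{Gr}(2,n+1)} \frac{1}{2\pi}\int_{S^1_\Pi} w(M,u)\,du\,d\Pi = \frac{c}{2\pi}\int_{S^n} w(M,u)\,du .
\]
To find $c$, take $w\equiv 1$. The left side is $1$ and the right side is $c\,\sigma_n/(2\pi)$, so $c = 2\pi/\sigma_n$. The right side therefore equals $\Xi(M)$, which proves the first equality in \eqref{int_geom_form_eqn}.

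\textbf{Step 4: the second equality.} Apply Theorem \ref{Crofton's formula} to the closed convex planar curve $\partial\phi_\Pi(M)$. This gives $\Xi(\partial\phi_\Pi(M)) = \frac{1}{\pi}L(\partial\phi_\Pi(M))$, and integrating over $\Pi$ yields the second equality.

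The only real obstacle is the measure-theoretic identification in Step 3. This is resolved by the invariance and uniqueness argument together with the normalization check using constant functions. Measurability is not an issue, since $w(M,u)$ is continuous in $u$.
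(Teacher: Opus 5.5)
Your proposal is correct and takes the same route as the paper. The paper's proof says only that the first equality follows from the definition of $\Xi$ by an elementary integral-geometric calculation and the second from Crofton's formula. You carry out that calculation: widths of $M$ in directions lying in $\Pi$ equal those of $\partial\phi_\Pi(M)$, and you average over the flag space using uniqueness of invariant measures on $S^n$. You also correctly note that $d\Pi$ must be the invariant probability measure, which is the normalization the paper implicitly uses when it integrates over $\mathrm{Gr}(2,n+1)$ in the proof of Theorem \ref{mean_width_theorem}.
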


\begin{proof} The first equality in \eqref{int_geom_form_eqn} follows from the definition of $\Xi$ via an elementary integral-geometric calculation and the second from Theorem \ref{Crofton's formula}. \end{proof}

\begin{definition}
Let $M^n$ be a closed, convex hypersurface in $\real^{n+1}$ and $S^{n} \subset \real^{n+1}$ the unit sphere. The \textbf{support function of $M$} is the function $h_{M}: S^{n} \to \real$ defined by
\begin{equation}
\displaystyle h_{M}(u) = \max\limits_{x \in M} \langle x, u \rangle.
\end{equation}
Equivalently, $h_{M}(u)$ is the larger of the signed distances from the origin to the two supporting hyperplanes of $M$ orthogonal to $u$. If $\Pi$ is an affine subspace of $\real^{n+1}$, the restriction of $h_{M}$ to the unit sphere in the linear subspace parallel to $\Pi$ is denoted $h_{M}|_\Pi$.
\end{definition}

\noindent The width in each direction may be computed using the support function: for each $u \in S^n$, 
\begin{equation}\label{width_support}
\displaystyle w(M,u) = h_{M}(u) + h_{M}(-u). 
\end{equation}
Note also that, for any convex hypersurface $M^{n}$ in $\real^{n+1}$ and any affine subspace $\Pi$ with $\Pi \cap M \neq \emptyset$,
\begin{equation}\label{slice_support}
\displaystyle h_{M}|_\Pi \geq h_{\Pi \cap M}. 
\end{equation}

\noindent It is well known \cite[Lemma 1.8.10]{Schneider1993} that $h_M$ is Lipschitz continuous, with a Lipschitz constant given by $\max_{x \in M} |x|$. Since $w(M,u) = h_M(u) + h_M(-u)$ for each $u \in S^n$, a Lipschitz constant for $w(M,\cdot)$ is given by $2\max_{x \in M} |x|$.

\begin{definition}\label{chebyshev_radius}
Let $M^n$ be a closed, convex hypersurface in $\real^{n+1}$. The \textbf{Chebyshev radius of $M$}, denoted $R_c(M)$, is the smallest radius of a closed ball in $\real^{n+1}$ that contains $M$.
\end{definition}

\noindent There is always a unique minimal enclosing ball of $M$ of radius $R_c(M)$, and its center must lie in the region bounded by $M$. By translating $M$ to place the center of its minimal enclosing ball at the origin, one finds that $w(M,\cdot)$ has Lipschitz constant $2R_c (M)$. If $M$ has maximum width $w^{+}(M)$ and a $2$-plane $\Pi$ meets $M$ in a curve of length $\pi w^{+}(M)$, then \eqref{slice_width_inequality} and Theorem \ref{Crofton's formula} imply that $\Pi \cap M$ has constant width $w^{+}(M)$ and, for all $u$ in the unit circle in $\Pi$,
\[
w(\Pi \cap M,u) = w^{+}(M) = h_{\Pi \cap M}(u) + h_{\Pi \cap M}(-u) \leq h_{M}(u) + h_{M}(-u) \leq w^{+}(M) \textrm{.}
\]
By \eqref{slice_support}, this implies that the even parts of $h_{\Pi \cap M}$ and $h_{M}|_\Pi$ agree, and, consequently, the odd parts do as well. Thus, $h_{\Pi \cap M} = h_{M}|_\Pi$. The next result is a quantitative refinement of this fact.

\begin{lemma}\label{width stability}
For each pair of positive numbers $R$ and $\varepsilon$, there is $\delta > 0$ such that, if $M^n$ is a closed, convex hypersurface in $\real^{n+1}$ of maximum width $w^{+}(M) \leq R$, and if $\Pi$ is a $2$-plane in $\real^{n+1}$ that intersects $M$ in a curve of length $L(\Pi \cap M) > \pi w^{+}(M) - \delta$, then $w^{-}(\Pi \cap M) > w^{+}(M) - \varepsilon$. In particular, if $M$ is a closed, convex hypersurface of maximum width $w^{+}(M)$ and $\Pi$ is a $2$-plane that intersects $M$ in a curve of length $\pi w^{+}(M)$, then $\Pi \cap M$ has constant width $w^{+}(M)$ and $h_{\Pi \cap M} = h_{M}|_\Pi$.  
\end{lemma}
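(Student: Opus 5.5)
We will prove the quantitative statement by contradiction and then read off the ``in particular'' clause from the discussion preceding the lemma. Let $\kappa = \Pi \cap M$, which is a closed convex planar curve (or a degenerate segment) lying in $\Pi$. For every unit vector $u$ parallel to $\Pi$, \eqref{slice_width_inequality} gives $w(\kappa,u) \leq w(M,u) \leq w^{+}(M)$. By Theorem \ref{Crofton's formula},
\[
L(\kappa) = \pi \Xi(\kappa) = \frac{1}{2}\int_{S^1} w(\kappa,u)\,du .
\]
So the hypothesis $L(\kappa) > \pi w^{+}(M) - \delta$ says that the nonnegative function $g(u) = w^{+}(M) - w(\kappa,u)$ on the unit circle of $\Pi$ satisfies $\int_{S^1} g\,du < 2\delta$.

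The key step is to turn this $L^1$ smallness into pointwise smallness using a uniform Lipschitz bound on $w(\kappa,\cdot)$. The text before the lemma shows that the width function of a convex body has Lipschitz constant $2R_c$, where $R_c$ is its Chebyshev radius. For $\kappa$ we have $R_c(\kappa) \leq \operatorname{diam}(\kappa) \leq w^{+}(M) \leq R$; indeed, any point of $\kappa$ is the center of a ball of radius equal to the diameter containing $\kappa$. Hence $w(\kappa,\cdot)$, and therefore $g$, is $2R$-Lipschitz on $S^1$.

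Now suppose $w(\kappa,u_0) \leq w^{+}(M) - \varepsilon$ for some $u_0$, so that $g(u_0) \geq \varepsilon$. Then $g(u) \geq \varepsilon - 2R|u-u_0|$ on an arc around $u_0$ of angular half-length comparable to $\varepsilon/(2R)$ (assuming $\varepsilon/(2R)$ is small; otherwise we replace it by a fixed fraction of $\pi$). Integrating this triangular lower bound gives $\int_{S^1} g \geq c\,\varepsilon^2/R$ for an absolute constant $c$, say $c = 1/2$ after the usual arc-versus-chord adjustment. Choosing $\delta = c\,\varepsilon^2/(2R)$ yields a contradiction, so $w^{-}(\kappa) > w^{+}(M) - \varepsilon$. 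The only care needed is in handling the arc-length versus chord-length constant and the degenerate case where $\kappa$ is a segment or a point. Both cases are covered by the same estimate, since a segment's width function is still $2R$-Lipschitz.

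For the final clause, let $L(\kappa) = \pi w^{+}(M)$. The hypothesis then holds for every $\delta > 0$, so applying the first part with $R = w^{+}(M)$ and every $\varepsilon > 0$ gives $w(\kappa,u) \geq w^{+}(M)$ for all $u$. Combined with $w(\kappa,u) \leq w^{+}(M)$, this shows $\kappa$ has constant width $w^{+}(M)$; one can also see this directly, since $g \geq 0$ is continuous with integral zero. The chain of inequalities displayed before the lemma then forces $h_\kappa(u)+h_\kappa(-u) = h_M(u)+h_M(-u)$. Together with $h_M|_\Pi \geq h_\kappa$ from \eqref{slice_support}, this gives equality termwise, so $h_{\Pi\cap M} = h_M|_\Pi$.
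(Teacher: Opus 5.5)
Your proposal is correct and follows essentially the same argument as the paper. Both argue by contradiction, using the $2R$-Lipschitz bound on $w(\Pi \cap M,\cdot)$ on an arc of half-length $\varepsilon/(2R)$ around a direction of minimal width together with Crofton's formula, and your choice $\delta = \varepsilon^2/(4R)$ (your $c = 1/2$) is exactly the paper's. The differences are cosmetic: you phrase the estimate as a lower bound on $\int_{S^1} g$ for the deficit $g = w^{+}(M) - w(\kappa,\cdot)$ and bound $R_c(\kappa)$ by $\mathrm{diam}(\kappa)$ rather than by $R_c(M)$, and your treatment of the ``in particular'' clause matches the discussion preceding the lemma.
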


\begin{proof} Without loss of generality, suppose $0 < \varepsilon < 2\pi R$, and set $\delta = \frac{\varepsilon^2}{4R}$. Let $M^n$ be a closed, convex hypersurface in $\real^{n+1}$ with $w^{+}(M) \leq R$, and let $\Pi$ be a $2$-plane such that $L(\Pi \cap M) > \pi w^{+}(M) - \delta$. Assume, toward a contradiction, that $w^{-}(\Pi \cap M) \leq w^{+}(M) - \varepsilon$. Let $\theta_0$ be a direction in which $w(\Pi \cap M,\theta_0) = w^{-}(\Pi \cap M)$. Since $R_c(\Pi \cap M) \leq R_c(M) \leq R$, $2R$ is a Lipschitz constant for $w(\Pi \cap M,\cdot)$. Therefore, on the interval $I = [\theta_0 - \frac{\varepsilon}{2R},\theta_0 + \frac{\varepsilon}{2R}]$ (where $S^1$ is identified with $\real/2\pi \integer$), $w(\Pi \cap M,\theta) \leq w^{-}(\Pi \cap M) + 2R|\theta - \theta_0|$. Integrating over $I$, one obtains
\[
\int_I w(\Pi \cap M,\theta)\,d\theta \leq \frac{\varepsilon w^{-}(\Pi \cap M)}{R} + \frac{\varepsilon^2}{2R}.
\]
At the same time,
\[
\int_{S^1 \setminus I} w(\Pi \cap M,\theta)\,d\theta \leq 2 \Big( \pi - \frac{\varepsilon}{2R} \Big)w^{+}(M).
\]
By Theorem \ref{Crofton's formula},
\begin{align*}
&\pi w^+(M) - \delta < L(\Pi \cap M) = \frac{1}{2} \int_{S^1} w(\Pi \cap M,\theta)\,d\theta\\
&\leq \frac{\varepsilon w^{-}(\Pi \cap M)}{2R} + \frac{\varepsilon^2}{4R} + \Big( \pi - \frac{\varepsilon}{2R} \Big) w^{+}(M).
\end{align*}
It follows that $\delta > \frac{\varepsilon^2}{4R}$, a contradiction. \end{proof}

\begin{definition}\label{long_diametrical_slices}
A \textbf{diametrical segment} of a closed, convex hypersurface $M^n$ in $\real^{n+1}$ is a chord of $M$ of length $w^{+}(M)$. Two points of $M$ are \textbf{diametrically opposed} if they are connected by a diametrical segment. If a $2$-plane $\Pi$ contains a diametrical segment of $M$, its intersection with $M$ is a \textbf{diametrical slice}. If all diametrical slices of $M$ have length $\pi w^{+}(M)$, then $M$ has \textbf{long diametrical slices}. 
\end{definition}

\noindent Note that every planar convex curve has long diametrical slices, according to the definition above, and that the conclusion of Theorem \ref{slicing property implies round} is not valid in this case.

\begin{lemma}\label{width_equals_diameter}
Let $M^n$ be a closed, convex hypersurface in $\real^{n+1}$. Then, the following hold:
\begin{itemize}
\item[\textbf{(a)}] If $x,y \in M$ realize $\max_{x,y \in M} |y - x|$, then the hyperplanes orthogonal to $y - x$ at $x$ and $y$ are supporting hyperplanes for $M$. 
\item[\textbf{(b)}] $w^{+}(M) = \max\limits_{x,y \in M} |y - x|$. In particular, $x$ and $y$ are diametrically opposed if and only if $|y - x| = w^{+}(M)$. 
\item[\textbf{(c)}] $M$ has at most one diametrical segment parallel to each $u \in S^n$. 
\item[\textbf{(d)}] The following are equivalent: 
\begin{itemize}
\item[(i)]  $M$ has constant width; 
\item[(ii)] For each $u \in S^n$, $M$ has a diametrical segment parallel to $u$;  
\item[(iii)] Each point of $M$ is diametrically opposed to a (not necessarily unique) point of $M$. 
\end{itemize}
\end{itemize}
\end{lemma}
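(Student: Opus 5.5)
We prove the four parts in order, working throughout with the convex body $K$ bounded by $M$. We use the fact that $w(M,u)$ is the width of $K$ in direction $u$, and that $\max_{x,y \in M}|y-x| = \max_{x,y\in K}|y-x|$. The second fact holds because the function $|y-x|$ is convex in each variable, so its maximum over $K$ is attained at extreme points, and these lie in $M$.

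\textbf{(a)} Let $x,y \in M$ realize the maximal distance $D$, and set $u = (y-x)/D$. Suppose some $z \in M$ has $\langle z - x, u\rangle > D$. Then $|z-x| \geq \langle z-x,u\rangle > D$, which contradicts maximality. Hence $\langle z, u\rangle \leq \langle y,u\rangle$ for all $z \in M$, so the hyperplane through $y$ orthogonal to $u$ supports $M$. The same argument with the roles of $x$ and $y$ exchanged handles $x$.

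\textbf{(b)} For any $u$, choose points $p,q \in M$ on the two supporting hyperplanes orthogonal to $u$. Then $w(M,u) = \langle q-p,u\rangle \leq |q-p| \leq D$. Conversely, by (a), the direction $u=(y-x)/D$ has both supporting hyperplanes passing through $x$ and $y$. Therefore $w(M,u) = D$, and so $w^+(M) = D$. The statement about diametrically opposed points then follows directly from Definition \ref{long_diametrical_slices}.

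\textbf{(c)} Suppose $[x,y]$ and $[x',y']$ are two distinct diametrical segments, both parallel to $u$, oriented so that $y-x = y'-x' = D u$. By (a), all four points lie on the two supporting hyperplanes orthogonal to $u$. Then $x,y,y',x'$ are the vertices of a parallelogram contained in $K$ with sides $Du$ and $v = x'-x \neq 0$, where $v \perp u$. Its diagonal satisfies $|y'-x|^2 = D^2 + |v|^2 > D^2$, which contradicts (b).

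\textbf{(d)} For (i)$\Rightarrow$(ii), fix $u$ and take $p,q$ on the two supporting hyperplanes orthogonal to $u$. Then $D = w(M,u) = \langle q-p,u\rangle \leq |q-p| \leq D$, so equality holds throughout. Hence $q-p = Du$, and $[p,q]$ is a diametrical segment parallel to $u$. For (ii)$\Rightarrow$(iii), take $x \in M$ and an outward unit normal $\nu$ of a supporting hyperplane at $x$. By (ii) there is a diametrical segment $[p,q]$ with $q - p = D\nu$, and by (a) the point $q$ lies on the supporting hyperplane orthogonal to $\nu$ on the side $\langle\cdot,\nu\rangle = h_M(\nu) = \langle x,\nu\rangle$. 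Then $\langle x - p,\nu\rangle = D$ and $|x-p| \leq D$, which forces $x-p = D\nu$. So $x = q$, and $x$ is diametrically opposed to $p$.

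For (iii)$\Rightarrow$(i), fix $u$ and let $x \in M$ be a point where $\langle\cdot,u\rangle$ is maximized, so $u$ is an outward normal at $x$. By (iii) there is $y$ with $|x-y| = D$. By (a), the hyperplane orthogonal to $x-y$ at $x$ supports $M$, so $(x-y)/D$ is also an outward normal at $x$. The difficulty is that $M$ need not be smooth at $x$, so the normal at $x$ need not be unique; this is the main obstacle. To handle it, restrict $x$ to the dense set of regular (smooth) points of $M$, whose normals are dense in $S^n$. At such a point the normal is unique, so $u = (x-y)/D$, and then by (a) $w(M,u) = D$. The set of such $u$ is dense in $S^n$, and $w(M,\cdot)$ is continuous (it is Lipschitz), so $w(M,\cdot) \equiv D$ and $M$ has constant width.
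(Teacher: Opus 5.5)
Your arguments for (a)--(c) and for (i)$\Rightarrow$(ii)$\Rightarrow$(iii) are correct and agree with the paper. Your (c) is its parallelogram-diagonal argument, and for (d) the paper only says it follows from (b) and (c). The gap is in (iii)$\Rightarrow$(i): it is not true that the outer normals at regular points of $M$ are dense in $S^n$. For a polytope they are just the finitely many facet normals. More to the point, density fails even for bodies that satisfy (iii). Rotating a Reuleaux triangle about an axis of symmetry gives a body of constant width in $\mathbb{R}^3$ whose apex has a normal cone with nonempty interior, a cap of angular radius $\pi/6$. Every $u$ in that open cap has the apex as its only support point, so $u$ is never the normal at a regular point. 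Your argument therefore only gives $w(M,\cdot)=D$ on the closure of the set of regular normals, which can be a proper subset of $S^n$. (If $M$ is $C^1$ the argument works, since the Gauss map is then onto, but the lemma assumes no smoothness.) What is missing is a reason why \emph{every} outer normal $u$ at a corner $x$ has $w(M,u)=D$, not just the particular normal $(x-y)/D$ supplied by (iii) and (a).

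One way to close the gap has three steps.

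\emph{Completeness.} Hypothesis (iii) forces diametrical completeness. If $z\notin K$, let $p$ be the point of $K$ nearest $z$ and $y\in M$ with $|p-y|=D$. Since $z-p$ is an outer normal at $p$, $\langle z-p,p-y\rangle\ge 0$, so $|z-y|^2\ge |z-p|^2+D^2>D^2$. Hence every point within distance $D$ of all of $K$ lies in $K$.

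\emph{Normal cone.} Fix $x\in M$ and let $T(x)$ be the compact set of unit vectors $(x-y)/D$ with $y\in M$, $|x-y|=D$; by (a) these are outer normals at $x$. Let $u$ be any outer unit normal at $x$. Suppose $u$ were not in the closed, pointed cone generated by $T(x)$. Then there is a separating vector $v$ with $\langle v,a\rangle<0$ for all $a\in T(x)$ and $\langle v,u\rangle>0$. By compactness, $|x+tv-k|\le D$ for all $k\in K$ and all small $t>0$. Completeness then gives $x+tv\in K$, contradicting $\langle u,tv\rangle>0$.

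\emph{Conclusion.} So $u=\sum\mu_i a_i$ with $a_i\in T(x)$, $\mu_i\ge 0$, and $\sum\mu_i\ge |u|=1$. Each inequality $\langle k-x,a_i\rangle\le -|k-x|^2/(2D)$ just says $|k-(x-Da_i)|\le D$. Summing them gives $|k-(x-Du)|\le D$ for every $k\in K$. By completeness $x-Du\in K$, so $w(M,u)=D$. Since every $u\in S^n$ is an outer normal at some point of $M$, $M$ has constant width.
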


\begin{proof} The result in (a) and the fact that $w^{+}(M) \leq \max_{x,y \in M} |y - x|$ are elementary. If $x$ and $y$ are points whose distance realizes $w^{+}(M)$, then the hyperplanes through them and orthogonal to the line connecting them are support hyperplanes for $M$, so $w^{+}(M) \geq |y - x|$, giving the equality in (b). If $M$ had a pair of parallel diametrical segments, one of the diagonals in the parallelogram they determined would be longer than them, contradicting (b) and proving (c). Part (d) follows from (b) and (c). \end{proof}

\noindent Part (b) of Lemma \ref{width_equals_diameter} implies that every convex hypersurface has a diametrical segment. The well-known fact that hypersurfaces of constant width are \textbf{strictly convex}, in the sense that they contain no line segments, can also be derived from Lemma \ref{width_equals_diameter}. 

\begin{definition}\label{normal_cycle_defn}
Let $M^n$ be a closed, convex hypersurface in $\real^{n+1}$ bounding a domain $\mathcal{K}$. We denote by $N(M)$ the \textbf{normal cycle of $\mathcal{K}$}, that is, the set of unit tangent vectors to $\real^{n+1}$ that point out of $\mathcal{K}$ and whose orthogonal complements are supporting hyperplanes to $M$.
\end{definition}

\noindent When $M$ is $C^{1}$, $N(M)$ is simply the outer component of the unit normal bundle to $M$. In general, $N(M)$ is a Lipschitz submanifold of the unit tangent bundle of $\real^{n+1}$. We refer to Z\"ahle \cite{Zahle1987} for background about the normal cycle and its fundamental properties. Below, the notation $v_x$ will sometimes be used to emphasize the basepoint $x$ of a vector $v \in T_x \real^{n+1}$. We will also sometimes identify a vector $v$ based at $x$ with the line segment connecting $x$ to $x + v$.

\begin{lemma}\label{normal vectors agree}
Let $M^n$ be a closed, convex hypersurface in $\real^{n+1}$ and $\Pi$ an affine subspace of $\real^{n+1}$ with $\Pi \cap M \neq \emptyset$. Denote by $\phi_{\Pi}$ the orthogonal projection onto $\Pi$. Then, the following are equivalent:
\begin{itemize}
\item[\textbf{(a)}] $h_{\Pi \cap M} = h_{M}|_\Pi$,
\item[\textbf{(b)}] $N(\Pi \cap M) = N(M) \cap T_\Pi$, 
\item[\textbf{(c)}] $\Pi \cap M = \partial \phi_{\Pi} (M)$.
\end{itemize}
\end{lemma}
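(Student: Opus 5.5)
The plan is to prove the cycle of implications (a)$\Rightarrow$(c)$\Rightarrow$(b)$\Rightarrow$(a). Throughout, $\Pi\cap M$ is treated as a convex hypersurface in $\Pi$, and $\partial\phi_\Pi(M)$ as the boundary in $\Pi$ of the convex body $\phi_\Pi(\mathcal{K})$. Two standard facts will be used repeatedly.

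\emph{Support functions.} For $u$ parallel to $\Pi$ and any $x$, one has $\langle \phi_\Pi(x),u\rangle=\langle x,u\rangle$ once the origin is placed in $\Pi$. Hence $h_{\partial\phi_\Pi(M)}=h_M|_\Pi$.

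\emph{Normal cycles.} $T_\Pi$ denotes vectors based at points of $\Pi$ and parallel to $\Pi$. With this convention, $N(M)\cap T_\Pi$ is the set of unit vectors $u_x$ with $x\in\Pi\cap M$, $u$ parallel to $\Pi$, and $\langle x,u\rangle=h_M(u)$. Likewise, $N(\Pi\cap M)$ consists of those $u_x$ with $x\in \Pi\cap M$, $u$ parallel to $\Pi$, and $\langle x,u\rangle=h_{\Pi\cap M}(u)$.

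For (a)$\Rightarrow$(c): by the first fact, (a) says that the two planar convex bodies $\Pi\cap\mathcal{K}\subseteq\phi_\Pi(\mathcal{K})$ have the same support function on the unit circle of $\Pi$. A convex body is determined by its support function, since it is the intersection of its supporting half-planes. Therefore $\Pi\cap\mathcal{K}=\phi_\Pi(\mathcal{K})$, and taking boundaries in $\Pi$ gives (c).

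For (c)$\Rightarrow$(b): if the boundaries agree, then so do the convex bodies, and hence $h_{\Pi\cap M}=h_M|_\Pi$. One inclusion in (b) follows from \eqref{slice_support}, which holds in general. Indeed, if $u_x\in N(M)\cap T_\Pi$, then $h_M(u)=\langle x,u\rangle\le h_{\Pi\cap M}(u)\le h_M(u)$, so $u_x\in N(\Pi\cap M)$. For the reverse inclusion, take $u_x\in N(\Pi\cap M)$. Then $\langle x,u\rangle=h_{\Pi\cap M}(u)=h_M(u)$, so the hyperplane through $x$ orthogonal to $u$ supports $M$, and $u_x\in N(M)$.

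For (b)$\Rightarrow$(a): fix a unit $u$ parallel to $\Pi$. Since $\Pi\cap M$ is a nonempty compact convex set in $\Pi$, some $x\in\Pi\cap M$ attains $h_{\Pi\cap M}(u)$, and then $u_x\in N(\Pi\cap M)$. By (b), $u_x\in N(M)$, so $h_M(u)=\langle x,u\rangle=h_{\Pi\cap M}(u)$.

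The main obstacle I expect is bookkeeping rather than substance. One issue is to fix the interpretation of $T_\Pi$ so that (b) is meaningful. Another is the degenerate case, where $\Pi\cap M$ is lower dimensional or a single point, for instance when $\Pi$ is tangent to $M$. There I would interpret $N(\Pi\cap M)$ as the set of unit vectors in $\Pi$ normal to supporting hyperplanes of $\Pi\cap\mathcal{K}$ within $\Pi$. The support-function arguments above then go through verbatim, because they use only compactness and convexity.
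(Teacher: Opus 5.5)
Your proof is correct and follows the paper's argument. Both rest on the observation that $h_M|_\Pi$ is the support function of $\phi_\Pi(\mathcal K)$ and on characterizing $u_x\in N(\cdot)$ by $\langle x,u\rangle=h(u)$; you run the cycle (a)$\Rightarrow$(c)$\Rightarrow$(b)$\Rightarrow$(a) and write out the implications the paper calls clear. Your convention of reading $\Pi\cap M$ as the relative boundary of $\Pi\cap\mathcal K$ in $\Pi$ is genuinely needed: when $\Pi$ is the plane of a facet (e.g.\ a face of a cube), (a) holds but (c) fails if $\Pi\cap M$ is read literally as a set.
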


\begin{proof} Note that $h_{M}|_\Pi$ agrees with the support function of $\partial \phi_{\Pi}(M)$ by the following argument: any supporting hyperplane to $\partial \phi_{\Pi}(M)$ extends orthogonally to a supporting hyperplane to $M$, though not necessarily based at the same point; conversely, any supporting hyperplane to $M$ orthogonal to a vector in $\Pi$ projects to a supporting hyperplane to $\partial \phi_{\Pi}(M)$. Suppose $h_{\Pi \cap M} = h_{M}|_\Pi = h_{\partial \phi_{\Pi}(M)}$. Then, $N(\Pi \cap M) = N(\partial \phi_{\Pi}(M))$. Note that $N(M) \cap T_\Pi \real^{n+1} \subseteq N(\Pi \cap M)$ by definition. If $u_x \in N(\Pi \cap M)$, then $u_x^\perp \subset T_x \real^{n+1}$ is a supporting hyperplane to $M$, so $N(\Pi \cap M) \subseteq N(M) \cap T_\Pi \real^{n+1}$. Thus, (a) $\implies$ (b). It is clear that (b) $\implies$ (c) $\implies$ (a). \end{proof}

\begin{lemma}\label{constant width}
If $M^n$ is a closed, convex hypersurface in $\real^{n+1}$ with long diametrical slices, then $M$ has constant width.
\end{lemma}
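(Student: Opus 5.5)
The plan is to verify criterion (ii) of Lemma \ref{width_equals_diameter}(d): for every $v \in S^n$, $M$ has a diametrical segment parallel to $v$. Constant width of $M$ then follows from the equivalence (ii) $\Leftrightarrow$ (i).

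First, by Lemma \ref{width_equals_diameter}(b), $M$ has at least one diametrical segment $[x,y]$ with $|y-x| = w^{+}(M)$. Let $u = (y-x)/|y-x|$. Fix an arbitrary $v \in S^n$. If $v = \pm u$ there is nothing to prove. Otherwise, let $\Pi$ be the affine $2$-plane through $x$ spanned by $u$ and $v$. Because $\Pi$ contains the diametrical segment $[x,y]$, the curve $\Pi \cap M$ is a diametrical slice. By the long-diametrical-slices hypothesis, it therefore has length $\pi w^{+}(M)$. The "in particular" clause of Lemma \ref{width stability} then says that $\Pi \cap M$ is a closed convex planar curve of constant width $w^{+}(M)$.

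Next, apply Lemma \ref{width_equals_diameter}(d), direction (i) $\Rightarrow$ (ii), to the planar convex curve $\Pi \cap M$. This is the case $n=1$ of that lemma, and its proof uses only parts (b) and (c), which hold verbatim in the plane. It gives a chord of $\Pi \cap M$ parallel to $v$ whose length equals $w^{+}(\Pi \cap M) = w^{+}(M)$. That chord is also a chord of $M$ of length $w^{+}(M)$, so by Definition \ref{long_diametrical_slices} it is a diametrical segment of $M$ parallel to $v$. Since $v$ was arbitrary, condition (ii) holds, and $M$ has constant width.

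I do not expect a serious obstacle. The step that needs the most care is justifying the use of Lemma \ref{width_equals_diameter}(d) for a planar curve rather than a hypersurface of dimension $n \geq 2$. If one prefers not to invoke the lemma in dimension one, there is a direct planar argument. The two support lines of $\Pi \cap M$ orthogonal to the direction $v^{\perp}$ in $\Pi$ are at distance $w^{+}(M)$. Choose touching points $p,q$ on them; then $w^{+}(M) \le |p-q| \le w^{+}(M)$, where the lower bound is because the support lines are that far apart and the upper bound is Lemma \ref{width_equals_diameter}(b). Hence $|p-q| = w^{+}(M)$, and equality in the lower bound forces $p-q$ to be orthogonal to the support lines, i.e. parallel to $v$. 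Either way, the only substantive input is Lemma \ref{width stability}.
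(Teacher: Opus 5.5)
Your proof is correct and is essentially the paper's argument: take a $2$-plane containing a diametrical segment and a vector parallel to the given direction, use Lemma \ref{width stability} to see that the slice has constant width $w^{+}(M)$, and extract a chord of length $w^{+}(M)$ parallel to that direction, which is then a diametrical segment of $M$. The paper reads off $w(M,v) = w^{+}(M)$ directly instead of citing Lemma \ref{width_equals_diameter}(d), which is only a cosmetic difference. There is one small slip in your optional planar argument: the support lines should be orthogonal to $v$ (not to $v^{\perp}$) for the touching points to give a segment parallel to $v$, though since the slice has constant width this only amounts to relabeling the direction.
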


\begin{proof} Let $u \in \real^{n+1}$ be any unit vector, and consider any $2$-plane $\Pi$ containing both a diametrical segment of $M$ and a vector parallel to $u$. By assumption, $\Pi \cap M$ has length $\pi w^{+}(M)$, so Lemma \ref{width stability} implies it has constant width $w^{+}(M)$. It therefore contains a diametrical segment parallel to $u$, which must also be a diametrical segment of $M$. Thus, $M$ has width $w^{+}(M)$ in the direction of $u$. \end{proof}

\begin{lemma}\label{normals are diameters in constant-width curves}
If $\kappa$ is a convex curve in $\real^2$ of constant width $w$, then for each $u_x \in N(\kappa)$, $-w u_x$ is a diametrical segment of $\kappa$.
\end{lemma}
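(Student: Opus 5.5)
Fix $u_x \in N(\kappa)$, so $x \in \kappa$ and the line $\ell_x = \{z : \langle z - x, u\rangle = 0\}$ supports $\kappa$ with $u$ pointing out of the region bounded by $\kappa$. Since $\kappa$ is its own diametrical slice, Lemma \ref{width_equals_diameter} applies with $n=1$. Part (b) gives $w^{+}(\kappa) = w$, and part (d) says $\kappa$ has a diametrical segment parallel to every direction. Let $[p,q]$ be the diametrical segment parallel to $u$, labeled so that $q - p = w u$. It suffices to show $q = x$, because then $p = x - wu$ and the segment from $x$ to $x - wu$, which is $-wu_x$ in the paper's convention, is a diametrical segment.

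By Lemma \ref{width_equals_diameter}(a), the lines through $p$ and through $q$ orthogonal to $u$ both support $\kappa$. Their distance is $|q-p| = w$, and this equals $w(\kappa,u)$, so they are exactly the two supporting lines of $\kappa$ orthogonal to $u$. Of these, the one through $q$ lies on the $+u$ side, since $\langle q,u\rangle > \langle p,u\rangle$. It is therefore the supporting line whose outward normal is $u$, namely $\ell_x$. So $x$ and $q$ both lie on $\ell_x \cap \kappa$.

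It remains to show $\ell_x \cap \kappa$ is a single point. Here I use strict convexity of constant-width curves, noted after Lemma \ref{width_equals_diameter}; I would rederive it directly. Suppose $\ell_x \cap \kappa$ contained a nondegenerate segment. By Lemma \ref{width_equals_diameter}(c) there is only one diametrical segment parallel to $u$. Take any point $z$ of $\ell_x \cap \kappa$ with $z \neq q$. By (d)(iii), $z$ is diametrically opposed to some $z' \in \kappa$. By (a), the line through $z$ orthogonal to $z' - z$ supports $\kappa$. But a supporting line through a relative interior point of the edge $\ell_x \cap \kappa$ must be $\ell_x$ itself, which forces $z' - z$ to be parallel to $u$. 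That produces a second diametrical segment parallel to $u$, contradicting (c).

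The one remaining subcase is when $z$ is an endpoint of the edge. There I would replace $z$ by a relative interior point of the edge; one exists that differs from $q$, since the edge has more than one interior point. Hence $\ell_x \cap \kappa = \{q\}$, so $x = q$, which proves the claim.

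The main subtlety is this last step: excluding a flat edge in $\ell_x \cap \kappa$, and making sure the endpoint case is handled correctly. The rest is bookkeeping with Lemma \ref{width_equals_diameter}.
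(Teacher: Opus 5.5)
Your proposal is correct and follows the paper's argument: the endpoint of the diametrical segment parallel to $u$ lies on the supporting line with outward normal $u$, and strict convexity forces that endpoint to be $x$. The only difference is that you re-derive strict convexity from Lemma \ref{width_equals_diameter}(a), (c), (d) rather than citing it. That derivation is sound, and the endpoint subcase disappears if you choose $z$ in the relative interior of the edge from the start.
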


\begin{proof} Let $u_x \in N(\kappa)$. By strict convexity, $x$ is the unique point in $\kappa$ at which $u_x^\perp$ is a supporting line to $\kappa$ and $u_x$ points away from the domain bounded by $\kappa$. At the same time, by strict convexity, there is a unique point $y$ at which $-w u_x$ is a diametrical segment of $\kappa$. Since $u_x$ points away from $\kappa$ at $y$ and $u_x^\perp$ is a supporting line to $\kappa$ at $y$, $y$ must equal $x$. \end{proof}

\begin{lemma}\label{normals are diameters in constant-width hypersurfaces}
Let $M^n$ be a closed, convex hypersurface in $\real^{n+1}$ with long diametrical slices and constant width $w$. Then, for each $u_x \in N(M)$, $-w u_x$ is a diametrical segment of $M$. In particular, if $M$ is $C^{1}$, then each $x$ in $M$ is diametrically opposed to a unique point, which we denote by $\alpha(x)$. 
\end{lemma}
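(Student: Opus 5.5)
The plan is to reduce to the planar statement, Lemma \ref{normals are diameters in constant-width curves}, by slicing $M$ with a well-chosen $2$-plane through $x$ containing $u$.

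Fix $u_x \in N(M)$. By Lemma \ref{width_equals_diameter}(d), since $M$ has constant width $w$, the point $x$ is diametrically opposed to some $y \in M$, with $|y-x| = w$. If $y - x$ is parallel to $u$, choose any affine $2$-plane $\Pi$ through $x$ containing the segment $[x,y]$. Otherwise, let $\Pi$ be the affine $2$-plane through $x$ spanned by $u$ and $y - x$. In either case $\Pi$ contains the diametrical segment $[x,y]$, so $\Pi \cap M$ is a diametrical slice. By the long diametrical slices hypothesis it has length $\pi w$, and Lemma \ref{width stability} then gives two facts: $\Pi \cap M$ has constant width $w$, and $h_{\Pi \cap M} = h_M|_\Pi$.

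Next, observe that $u_x \in N(M) \cap T_\Pi \real^{n+1}$, since $u$ is parallel to $\Pi$ and $x \in \Pi$. Hence $u_x \in N(\Pi\cap M)$; the inclusion $N(M)\cap T_\Pi \subseteq N(\Pi\cap M)$ is immediate from the definitions, and Lemma \ref{normal vectors agree} gives this as well. Applying Lemma \ref{normals are diameters in constant-width curves} to the planar curve $\Pi \cap M$ of constant width $w$, the segment $-w u_x$ is a diametrical segment of $\Pi \cap M$. It therefore has length $w = w^{+}(M)$ with both endpoints in $M$, so by Lemma \ref{width_equals_diameter}(b) it is a diametrical segment of $M$.

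For the $C^1$ case, $N(M)$ at each $x$ consists of the single outer unit normal $\nu(x)$, so $x - w\nu(x) \in M$ is diametrically opposed to $x$. For uniqueness, suppose $x$ and $z$ are diametrically opposed. By Lemma \ref{width_equals_diameter}(a), the hyperplane orthogonal to $z - x$ at $x$ is a supporting hyperplane. Since $M$ is $C^1$, this hyperplane must be the tangent hyperplane, so $(x-z)/w = \nu(x)$, which forces $z = x - w\nu(x)$. Define $\alpha(x) = x - w\nu(x)$.

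The main point needing care is that $\Pi$ must simultaneously contain a diametrical segment, so that the hypothesis applies, and the normal direction $u$, so that the planar lemma applies to $u_x$. Choosing $\Pi$ to be spanned by $y - x$ and $u$ through $x$ accomplishes both. The remaining steps are direct applications of earlier lemmas.
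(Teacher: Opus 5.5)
Your proof is correct and follows essentially the same route as the paper. You slice $M$ by a $2$-plane through $x$ containing both the direction $u$ and a diametrical segment, use Lemma \ref{width stability} together with the inclusion $N(M)\cap T_\Pi\real^{n+1}\subseteq N(\Pi\cap M)$, and apply the planar Lemma \ref{normals are diameters in constant-width curves}. The differences are cosmetic: the paper uses the diametrical segment parallel to $u$ (via Lemma \ref{width_equals_diameter}(d)(ii)) rather than one through $x$, and it leaves implicit the $C^1$ uniqueness argument that you correctly supply via Lemma \ref{width_equals_diameter}(a).
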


\begin{proof} Let $u_x \in N(M)$. Since $M$ has constant width, there exists some diametrical segment $\ell$ of $M$ parallel to $-u$. Let $\Pi$ be any $2$-plane containing $\ell$ and $x$. By Lemmas \ref{width stability} and \ref{normal vectors agree}, $\Pi \cap M$ has constant width $w$ and $N(\Pi \cap M) = N(M) \cap T_\Pi \real^{n+1}$. By Lemma \ref{normals are diameters in constant-width curves}, for each $v_y \in N(\Pi \cap M)$, $-w v_y$ is a diametrical segment of $\Pi \cap M$ and, consequently, of $M$ itself. Since $u_x \in N(M) \cap T_\Pi \real^{n+1}$, $-w u_x$ is a diametrical segment of $M$. \end{proof}

\noindent The proof of Lemma \ref{slices are closed geodesics} draws on the following differentiability result for Lipschitz functions. 

\begin{lemma}\label{bypass chain rule}
Let $F : U \subseteq \real^n \to \real^m$ be Lipschitz continuous. Suppose $f,g : (a,b) \to U$ are differentiable at $t \in (a,b)$, $f(t) = g(t)$, and $f'(t) = g'(t)$. If $F \circ f$ is differentiable at $t$, then $F \circ g$ is differentiable at $t$, and $(F \circ g)'(t) = (F \circ f)'(t)$.
\end{lemma}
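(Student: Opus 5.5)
The plan is to compare the difference quotients of $F \circ g$ and $F \circ f$ at $t$ directly, using only the Lipschitz bound for $F$ and the fact that $f$ and $g$ agree to first order at $t$. No chain rule for $F$ is needed, which matters because $F$ need not be differentiable at the point $f(t) = g(t)$.

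Let $K$ be a Lipschitz constant for $F$. For $h \neq 0$ small enough that $t+h \in (a,b)$, write
\[
\frac{F(g(t+h)) - F(g(t))}{h} = \frac{F(f(t+h)) - F(f(t))}{h} + \frac{F(g(t+h)) - F(f(t+h))}{h},
\]
using $F(g(t)) = F(f(t))$, which holds because $f(t) = g(t)$. As $h \to 0$, the first term converges to $(F \circ f)'(t)$ by hypothesis.

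It remains to show that the second term tends to $0$. The Lipschitz bound gives
\[
\left| \frac{F(g(t+h)) - F(f(t+h))}{h} \right| \leq K \, \frac{|g(t+h) - f(t+h)|}{|h|}.
\]
Because $f$ and $g$ are differentiable at $t$ with $f(t) = g(t)$ and $f'(t) = g'(t)$, we have $g(t+h) - f(t+h) = o(h)$: expanding each function to first order at $t$, the constant and linear terms cancel. Hence the right-hand side tends to $0$.

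Combining the two limits shows that $F \circ g$ is differentiable at $t$ with $(F \circ g)'(t) = (F \circ f)'(t)$. The argument holds for one-sided $h$ as well, so it also covers endpoint versions if those are needed later. There is no real obstacle here. The only point requiring care is to avoid invoking a chain rule for $F$; the Lipschitz estimate is exactly what replaces it, and it is also what allows $F$ to be defined only on $U$, since $f(t+h)$ and $g(t+h)$ lie in $U$ by assumption.
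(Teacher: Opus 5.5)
Your proposal is correct and follows essentially the same argument as the paper. Both proofs use the Lipschitz bound to show that $\bigl(F(g(t+h)) - F(f(t+h))\bigr)/h \to 0$, since $g(t+h) - f(t+h) = o(h)$. Consequently the difference quotients of $F \circ g$ and $F \circ f$ at $t$ have the same limit.
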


\begin{proof} Let $C$ be a Lipschitz constant for $F$. Then,
\begin{align*}
&\lim_{h \to 0} \Big| \frac{(F \circ g)(t + h) - (F \circ f)(t + h)}{h} \Big| \leq C\lim_{h \to 0} \Big| \frac{g(t + h) - f(t + h)}{h} \Big|\\
= \ &C\lim_{h \to 0} \Big| \frac{g(t + h) - g(t)}{h} - \frac{f(t + h) - f(t)}{h} \Big| = C|g'(t) - f'(t)| = 0.
\end{align*}
It follows that $\lim\limits_{h \to 0} \frac{(F \circ g)(t + h) - (F \circ g)(t)}{h} = \lim\limits_{h \to 0} \frac{(F \circ f)(t + h) - (F \circ f)(t)}{h} = (F \circ f)'(t)$, which implies that $(F \circ g)'(t) = (F \circ f)'(t)$. \end{proof}

\begin{lemma}\label{slices are closed geodesics}
If $M^n$ is a closed, convex, $C^{1,1}$ hypersurface in $\real^{n+1}$ with long diametrical slices and constant width $w$, then the geodesics of $M$ coincide with its diametrical slices, and the intrinsic diameter and injectivity radius of $M$ are both $\frac{\pi w}{2}$. 
\end{lemma}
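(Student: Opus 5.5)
We will show that each diametrical slice, parametrized at constant speed, satisfies the geodesic equation of Lemma \ref{acceleration normal to hypersurface}. Uniqueness of solutions then identifies every geodesic with a slice. The diameter and injectivity radius follow from the fact that all geodesics from a point $x$ reach $\alpha(x)$ at length $\frac{\pi w}{2}$.

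\textbf{Step 1: slices are geodesics.} Since $M$ is $C^1$, Lemma \ref{normals are diameters in constant-width hypersurfaces} gives a unique antipode $\alpha(x)=x-w\nu(x)$, where $\nu$ is the outer unit normal. Because $M$ is $C^{1,1}$, $\nu$ is Lipschitz. Let $\Pi$ be a $2$-plane containing a diametrical segment, and let $\kappa=\Pi\cap M$ be parametrized by arclength. By Lemmas \ref{width stability} and \ref{normal vectors agree}, $N(\kappa)=N(M)\cap T_\Pi\real^{n+1}$. Hence at every point of $\kappa$ the normal $\nu$ lies in $\Pi$ and equals the outer normal of $\kappa$ within $\Pi$. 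The unit normal of $\kappa$ in $\Pi$ is $\nu\circ\kappa$, which is Lipschitz, so $\kappa$ is $C^{1,1}$. For almost every $t$ we therefore have $\kappa''(t)\parallel(\nu\circ\kappa)(t)$, which is normal to $M$.

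I then want to conclude that $\kappa$ is locally minimizing. The intended route is to take a short minimal geodesic $\gamma$ in $M$ with the same initial point and velocity, and compare the two curves using Lemma \ref{bypass chain rule} applied to $F=\nu$. The aim is to show that $\gamma$ stays in $\Pi$, which would give $\gamma=\kappa$ locally. A cleaner alternative is a direct calibration-style argument: for $C^{1,1}$ curves whose acceleration is normal to $M$ almost everywhere, the first variation formula (Lemma \ref{first variation formula}) shows that $\kappa$ is a critical point of energy. I expect to combine this with local uniqueness of the geodesic ODE $\gamma''=-\langle\gamma',d\nu(\gamma')\rangle\nu$.

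This is the main obstacle. The map $\nu$ is only Lipschitz, so the ODE need not have unique solutions. My plan is to avoid ODE uniqueness entirely, as follows.
\begin{itemize}
\item Given a geodesic $\gamma$ with $\gamma(0)=x$ and $\gamma'(0)=v$, let $\Pi$ be the plane through $x$ spanned by $v$ and $\nu(x)$.
\item This plane contains the diametrical segment from $x$ to $\alpha(x)$, so $\Pi\cap M$ is a diametrical slice.
\item Set $\beta(t)=\gamma(t)+w\,\cdots$ and consider $g(t)=\langle\gamma(t)-x,e\rangle$, where $e$ is the unit normal to $\Pi$. Using $\gamma''\parallel\nu\circ\gamma$ almost everywhere, derive a Lipschitz linear differential inequality for the pair $(g,\langle\gamma',e\rangle)$.
\item The required estimate is that $\langle\nu(\gamma(t)),e\rangle$ is bounded by a constant times $\operatorname{dist}(\gamma(t),\Pi)$. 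This should follow because $\nu$ is Lipschitz and $\nu$ is tangent to $\Pi$ along $\Pi\cap M$; I expect it to need a Lipschitz estimate for the nearest-point map onto the slice.
\item Grönwall then gives $g\equiv0$, so $\gamma$ stays in $\Pi$ and hence traces $\kappa$.
\end{itemize}
Conversely, $\kappa$ is locally the geodesic with its own initial data, since such a geodesic exists and, by this argument, lies in the slice. So every geodesic runs along a diametrical slice and every slice is a closed geodesic of length $\pi w$.

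\textbf{Step 2: diameter and injectivity radius.} Every geodesic from $x$ lies in a slice of constant width $w$ through $x$ and $\alpha(x)$. By Lemma \ref{normals are diameters in constant-width curves} and symmetry, each such slice consists of two arcs from $x$ to $\alpha(x)$. The total length is $\pi w$. For a curve of constant width, the arcs between antipodal points each have length $\frac{\pi w}{2}$: this follows from Crofton's formula applied to the half-curves, or from $\kappa(t+\frac{\pi w}{2})=\kappa(t)-w\nu$, which holds because the normal turns monotonically and the map to the antipode is unique.

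Every unit-speed geodesic from $x$ therefore reaches $\alpha(x)$ at time $\frac{\pi w}{2}$. Distinct geodesics from $x$ meet only at $x$ and $\alpha(x)$, because two distinct planes through the segment $[x,\alpha(x)]$ intersect only along that line. The exponential map is thus injective on the open ball of radius $\frac{\pi w}{2}$, and every point of $M$ is reached within that distance. This gives intrinsic diameter $\le\frac{\pi w}{2}$ and injectivity radius $\ge\frac{\pi w}{2}$.

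For the reverse inequalities, $d(x,\alpha(x))=\frac{\pi w}{2}$, because a minimizing geodesic from $x$ to $\alpha(x)$ is an arc of a slice, and all such arcs have length $\frac{\pi w}{2}$. The geodesics from $x$ conjugate or meet at $\alpha(x)$, so the injectivity radius is exactly $\frac{\pi w}{2}$, and since the injectivity radius never exceeds the diameter, the diameter also equals $\frac{\pi w}{2}$.
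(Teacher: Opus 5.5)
\textbf{Gap in Step 2.} Step 2 rests on a false claim. It is not true that in every closed curve of constant width $w$ the two arcs joining a point to its diametrically opposed point each have length $\frac{\pi w}{2}$. Take a Reuleaux triangle with vertices $A,B,C$ (or any of its smooth outer parallel curves). Let $x$ lie on the arc $BC$ with $\angle BAx=\theta\in(0,\frac{\pi}{3})$. The point opposite $x$ is $A$, and the two arcs from $x$ to $A$ have lengths $w(\theta+\frac{\pi}{3})$ and $w(\frac{2\pi}{3}-\theta)$, which differ unless $\theta=\frac{\pi}{6}$.

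Neither of your justifications works. Parametrizing by the normal angle $s$, the identity $\kappa(s+\pi)=\kappa(s)-w\nu(s)$ is true. But the arclength over a half-turn of the normal is $\int_s^{s+\pi}\rho$, where $\rho$ is the radius of curvature, and constant width only forces $\rho(s)+\rho(s+\pi)=w$. Crofton's formula applied to the region cut off by a diametral chord does not determine that region's perimeter either.

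Your claim amounts to saying every curve of constant width is Zindler, which by Corollary \ref{zindler_corollary} would make every such curve a circle. In the paper, the bisection property of diametrical slices is a \emph{consequence} of this lemma. It is exactly what is fed into Besicovitch--Smith in the proof of Theorem \ref{slicing property implies round}. It cannot come from the geometry of a single slice; this is where $n\ge 2$ has to enter.

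\textbf{The missing idea.} The equality of the two arcs must come from the hypersurface. After Step 1, every arc from $x$ to $\alpha(x)$ of a slice through $x$ and $\alpha(x)$ is a geodesic. Any two such arcs are joined by a continuous family of such arcs with the same endpoints. The paper builds this family as follows: project $M$ radially onto a sphere centered at $\frac{x+\alpha(x)}{2}$, rotate in a $2$-plane orthogonal to the segment from $x$ to $\alpha(x)$ (inside a $3$-space containing both slices), and project back. Each curve in the family is a geodesic, and the variation field is tangent to $M$ and vanishes at the endpoints, so Lemma \ref{first variation formula} shows the length is constant along the family. Rotating by $\pi$ carries one arc of a slice onto the other, so both arcs have length $\frac{\pi w}{2}$. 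With this in place, the rest of your Step 2 (injectivity, diameter, injectivity radius) goes through.

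\textbf{Step 1.} Your Gr\"{o}nwall argument is a genuinely different route from the paper's. The paper applies Lemma \ref{bypass chain rule} to the slice tangent to $\gamma'(t)$ at each $t$, gets $(N\circ\gamma)'=b\,\gamma'$, and concludes that $\gamma'\wedge(N\circ\gamma)$ is constant. Your route is viable, but the estimate you leave open must be proved.
\begin{itemize}
\item Let $\vec{\Pi}$ be the direction space of $\Pi$. For $q\in\Pi\cap M$ we have $\nu(q)\in\vec{\Pi}$, hence $\vec{\Pi}^{\perp}\subset T_qM$.
\item So orthogonal projection onto $\vec{\Pi}^{\perp}$ restricts to a $C^1$ submersion of $M$ near the compact slice. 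This gives $\mathrm{dist}(p,\Pi\cap M)\le C\,\mathrm{dist}(p,\Pi)$ there.
\item Since $\nu$ is Lipschitz, the $\vec{\Pi}^{\perp}$-component of $\nu(p)$ is then at most $C'\,\mathrm{dist}(p,\Pi)$.
\end{itemize}
For $n\ge 3$ you must run Gr\"{o}nwall on the full $\vec{\Pi}^{\perp}$-component of $\gamma(t)-x$, not on a single unit normal $e$. For the converse, it is cleaner to take minimizing geodesics from $x$ to nearby points of $\kappa$ off the line through $x$ and $\alpha(x)$. That avoids relying on an unproved existence of geodesics with prescribed initial velocity.
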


\begin{proof}
Let $\gamma : (-\varepsilon, \varepsilon) \to M$ be a unit-speed parametrization of a geodesic segment. By Theorem \ref{lytchak_theorem}, $\gamma$ is $C^{1,1}$. For each $t \in (-\varepsilon, \varepsilon)$, let $\Pi_{\gamma(t)}$ be the $2$-plane through $\gamma(t)$ and parallel to the $2$-plane spanned by $\gamma'(t)$ and $N(\gamma(t))$. Lemma \ref{normals are diameters in constant-width hypersurfaces} implies that $\Pi_{\gamma(t)} \cap M$ is a diametrical slice. Lemma \ref{acceleration normal to hypersurface} implies that, for a bounded and measurable $c : (-\varepsilon,\varepsilon) \to \real$, $\gamma'' = c(t)N(\gamma(t))$ almost everywhere. Because $\gamma$ and $N$ are Lipschitz continuous, $N \circ \gamma : (-\varepsilon, \varepsilon) \to \real^{n+1}$ is Lipschitz continuous and, consequently, differentiable for almost all $t$. For each such $t$, let $\sigma : (-\varepsilon, \varepsilon) \to M$ be a unit-speed parametrization of $\Pi_{\gamma(t)} \cap M$ with $\sigma(t) = \gamma(t)$ and $\sigma'(t) = \gamma'(t)$. By Lemma \ref{bypass chain rule}, the differentiability of $N \circ \gamma$ at $t$ implies that $(N \circ \sigma)'(t) = (N \circ \gamma)'(t)$. Lemma \ref{width stability} implies that $N$ restricts to the unit normal vector field along $\Pi_{\gamma(t)} \cap M$ and, therefore, that $(N \circ \sigma)'(t)$ is a scalar multiple of $\sigma'(t)$. Together, these imply that $(N \circ \gamma)'(t) = b(t) \gamma'(t)$ almost everywhere for a bounded and measurable $b : (-\varepsilon, \varepsilon) \to \real$. Define $F : (-\varepsilon, \varepsilon) \to \Lambda^{2}\left(\real^{n+1}\right)$ by $F(t) = \gamma'(t) \wedge (N \circ \gamma)(t)$. Because $\gamma' : (-\varepsilon, \varepsilon) \to \real^{n+1}$ and $N \circ \gamma : (-\varepsilon, \varepsilon) \to \real^{n+1}$ are Lipschitz continuous, $F$ is as well. Its image is a rectifiable curve in the vector space $\Lambda^{2}\left(\real^{n+1}\right)$, and its length is $\int_{-\varepsilon}^{\varepsilon}|F'(t)|dt$. By the above,
\begin{align*}
F'(t) &= \gamma''(t) \wedge (N \circ \gamma)(t) + \gamma'(t) \wedge (N \circ \gamma)'(t)\\
&= c(t)(N \circ \gamma)(t) \wedge (N \circ \gamma)(t) + b(t) \gamma'(t) \wedge \gamma'(t)\\
&= 0
\end{align*}
almost everywhere. The map $F$ is therefore constant; thus, for all $t \in (-\varepsilon, \varepsilon)$, $\gamma'(t)$ is contained in the $2$-plane spanned by $\gamma'(0)$ and $(N \circ \gamma)(0)$. Because $\gamma(t) - \gamma(0) = \int_{0}^{t} \gamma'(\tau) d \tau$, this implies that $\gamma : (-\varepsilon, \varepsilon) \to M$ remains in the diametrical slice $\Pi_{\gamma(0)} \cap M$. The connectedness of geodesics and diametrical segments therefore implies that they define the same family of curves in $M$. 

Given $x \in M$, let $\alpha(x)$ be the unique point diametrically opposed to $x$, as in Lemma \ref{normals are diameters in constant-width hypersurfaces}. By the characterization of geodesics above, the injective geodesic segments from $x$ to $\alpha(x)$ are given by arcs of intersections of $M$ with $2$-planes containing $x$ and $\alpha(x)$. Given two such geodesics $\gamma_{1}$ and $\gamma_{2}$ that are arcs of $\Pi_{1} \cap M$ and $\Pi_{2} \cap M$, respectively, let $\hat{\Pi}$ be a $3$-dimensional affine space containing $\Pi_{1}$ and $\Pi_{2}$, and let $\Pi_{0}$ be the $2$-plane in $\hat{\Pi}$ through $\frac{x+\alpha(x)}{2}$ and orthogonal to the diametrical segment $-w N(x)$ between $x$ and $\alpha(x)$.  For each real number $\vartheta$, let $\Psi_{\vartheta}:M \to M$ be the map given by projecting $M$ radially inward onto a sphere centered at $\frac{x+\alpha(x)}{2}$, rotating this sphere by an angle $\vartheta$ in the plane $\Pi_{0}$ according to a fixed orientation, and then radially projecting back out onto $M$. This family of mappings, for $0 \leq \vartheta \leq \angle(\Pi_{1},\Pi_{2})$, transforms $\gamma_{1}$ into $\gamma_{2}$, and Lemma \ref{first variation formula} implies that all segments in this variation have the same length. As a result, all geodesic segments linking $x$ and $\alpha(x)$ have length $\frac{\pi w}{2}$, which must therefore be their intrinsic distance. Geodesics in $M$ must therefore minimize between any pair of diametrically opposed points and cannot minimize past them.
\end{proof}

\begin{remark}\label{blaschke_remark}
If $M$ is smooth, the fact that all geodesics beginning at a point $x \in M$ meet at $\alpha(x)$ implies the Riemannian metric on $M$ is a Blaschke metric, cf. \cite{Besse1978}. The identity $(N \circ \sigma)'(t) = b(t)\sigma(t)$ appearing in the proof of Lemma \ref{slices are closed geodesics} implies that all tangent vectors to $M$ are principal vectors and, therefore, that $M$ is a totally umbilic hypersurface. Both of these conditions are known to imply that $M$ is isometric to a round sphere in the smooth case, but, to the best of the authors' knowledge, it has not been shown that these characterizations hold under the conditions in Lemma \ref{slices are closed geodesics}. 
\end{remark}


\subsection{Zindler Curves of Constant Width}

Let $\kappa$ be a closed curve in the plane $\real^2$. A chord of $\kappa$ is \textbf{diametral} if its length is equal to the width of $\kappa$ in the direction it determines. Klamkin asked whether every closed planar curve of constant width whose diametral chords all bisect its perimeter must be a circle (see \cite{HammerSmith1964} for a discussion). This was answered in the affirmative by Besicovitch \cite{Besicovitch1961} and Smith \cite{Smith1961}.

\begin{theorem}[Besicovitch--Smith]\label{besicovitch_smith_thm}
Every closed planar curve of constant width whose diametral chords all bisect its perimeter is a circle.
\end{theorem}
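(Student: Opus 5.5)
The plan is to encode the curve by its support function, so that both the constant-width condition and the bisection condition become linear conditions on a single periodic function. Let $\kappa$ have constant width $w$ and identify $S^1$ with $\real/2\pi\integer$. Write $h(\theta) = h_\kappa(\cos\theta,\sin\theta)$. By \eqref{width_support}, constant width means
\[
h(\theta) + h(\theta+\pi) = w \quad \text{for all } \theta .
\]
So I will write $h = \frac{w}{2} + f$ with $f(\theta+\pi) = -f(\theta)$.

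\textbf{Regularity.} A curve of constant width is strictly convex (see the remark after Lemma \ref{width_equals_diameter}). It also has no corners: a corner would carry a whole arc of normals, and then, by Lemma \ref{normals are diameters in constant-width curves}, the diametrically opposed arc would have radius of curvature $w$. More precisely, the radius-of-curvature measure $\rho = h + h''$ (taken distributionally) satisfies $\rho(\theta) + \rho(\theta+\pi) = w$ and $\rho \geq 0$. Hence $0 \leq \rho \leq w$, so $\rho$ is an $L^\infty$ function. It follows that $h$, and therefore $f$, is $C^{1,1}$. Arclength is $ds = \rho\, d\theta$.

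\textbf{Reducing bisection to an equation on $f$.} By Lemma \ref{normals are diameters in constant-width curves}, the diametral chord in direction $\theta$ joins the unique points with outer normals $\theta$ and $\theta+\pi$. Strict convexity then gives the length of one of the two arcs it cuts off:
\[
\int_\theta^{\theta+\pi} \rho\, d\tau = \frac{\pi w}{2} + \int_\theta^{\theta+\pi} f\, d\tau + f'(\theta+\pi) - f'(\theta) = \frac{\pi w}{2} + \int_\theta^{\theta+\pi} f\, d\tau - 2 f'(\theta).
\]
The total length is $\pi w$ by Theorem \ref{Crofton's formula}. So the bisection hypothesis is
\[
G(\theta) := \int_\theta^{\theta+\pi} f\, d\tau - 2 f'(\theta) = 0 \quad \text{for all } \theta .
\]

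\textbf{Solving the equation.} Since $f'$ is Lipschitz, $G$ is Lipschitz, and differentiating almost everywhere gives
\[
f(\theta+\pi) - f(\theta) - 2 f''(\theta) = -2\bigl(f(\theta) + f''(\theta)\bigr) = 0 .
\]
Thus $f'' + f = 0$ in the distributional sense, and so $f = a\cos\theta + b\sin\theta$. This is exactly the support function of the point $(a,b)$. Therefore $h$ is the support function of the circle of radius $w/2$ centred at $(a,b)$, and $\kappa$ is that circle.

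\textbf{Main obstacle.} The computation itself is routine. The step that needs care is the regularity bookkeeping: justifying $\rho \in L^\infty$, so that $h'$ is absolutely continuous and the integration by parts and the almost-everywhere differentiation are legitimate, and checking that the arc between the endpoints of the diametral chord in direction $\theta$ is exactly the set of points with normal angles in $[\theta, \theta+\pi]$. Both follow from strict convexity together with the identity $\rho(\theta) + \rho(\theta+\pi) = w$.
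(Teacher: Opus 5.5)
\textbf{Comparison with the paper.} The paper does not prove this statement. It quotes it from Besicovitch and Smith, noting that Geppert proved it earlier under additional regularity assumptions. It uses the result only through Corollary \ref{zindler_corollary}, applied to diametrical slices, which are convex curves of constant width.

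Your argument is a correct, self-contained proof in that convex setting. So it takes a genuinely different route from deferring to the literature. The key observation is that constant width alone forces the radius-of-curvature measure $\rho = h + h''$ to satisfy $0 \leq \rho \leq w$. Hence $h \in C^{1,1}$, and $x(\theta) = h(\theta)(\cos\theta,\sin\theta) + h'(\theta)(-\sin\theta,\cos\theta)$ is a Lipschitz, monotone parametrization of $\kappa$ with $|x'| = \rho$ almost everywhere. No extra regularity hypothesis of the kind attributed to Geppert is needed, so your proof could replace the citation.

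Your implicit assumption that $\kappa$ is convex is harmless. The convex hull of a simple closed curve of constant width is a body of constant width, hence strictly convex. Its whole boundary therefore consists of extreme points, which must lie on the curve, so the curve equals that boundary.

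\textbf{Two corrections.} First, the sentence ``It also has no corners'' is false: the Reuleaux triangle has constant width and three corners. A corner with normal cone $[\alpha,\beta]$ corresponds to $\rho = 0$ on $[\alpha,\beta]$ and $\rho = w$ on $[\alpha+\pi,\beta+\pi]$. This is perfectly compatible with $0 \leq \rho \leq w$. What the bound actually excludes is atoms of $\rho$, i.e.\ segments in $\kappa$. Nothing downstream uses the false claim, because the arc $x([\theta,\theta+\pi])$ and its length $\int_\theta^{\theta+\pi}\rho\,d\tau$ account for corners automatically (they contribute zero length). Simply delete it.

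Second, the passage through $f$ is a detour. Since $\rho = \frac{w}{2} + f + f''$, your $G'$ equals $-2(\rho - \frac{w}{2})$. Said directly, bisection means $\theta \mapsto \int_\theta^{\theta+\pi}\rho\,d\tau$ is constant, so $\rho(\theta+\pi) = \rho(\theta)$ almost everywhere. Combined with $\rho(\theta) + \rho(\theta+\pi) = w$, this gives $\rho \equiv \frac{w}{2}$, i.e.\ $h + h'' = \frac{w}{2}$. The solutions of this equation are exactly the support functions of circles of radius $\frac{w}{2}$.
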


\noindent A closed planar curve with the property that all chords that bisect its perimeter have the same length is called \textbf{Zindler}, as the first nontrivial examples were constructed in \cite{Zindler1921}. This is equivalent to all chords that bisect its area having the same length. 

\begin{lemma}\label{zindler_lemma}
For a closed planar curve $\kappa$ of constant width $w$, the following are equivalent:	
\begin{itemize}
\item[\textbf{(i)}] $\kappa$ is Zindler;
\item[\textbf{(ii)}] Every diametral chord of $\kappa$ bisects its perimeter;
\item[\textbf{(iii)}] Every diametral chord of $\kappa$ bisects its area.
\end{itemize}
\end{lemma}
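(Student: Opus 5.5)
Throughout, $\kappa$ has constant width $w$, so it is strictly convex and, by Lemma \ref{normals are diameters in constant-width curves}, through every boundary point there is exactly one diametral chord, namely the one along the inner normal. Parametrize $\kappa$ by its outer normal angle $\theta\in\real/2\pi\integer$, writing $x(\theta)$ for the point with outer normal $u(\theta)=(\cos\theta,\sin\theta)$. The diametral chords are then exactly the segments $[x(\theta),x(\theta+\pi)]$, with $x(\theta+\pi)=x(\theta)-w\,u(\theta)$. Let $h$ be the support function, so $h(\theta)+h(\theta+\pi)=w$. I will work with $h$ first, assuming enough smoothness (at least $C^{1,1}$ after approximation), and then remove the smoothness assumption by a limiting argument if needed.

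\textbf{(ii)$\Leftrightarrow$(iii).} The arc of $\kappa$ from $x(\theta)$ to $x(\theta+\pi)$ has length
\[
\ell(\theta)=\int_\theta^{\theta+\pi}\bigl(h+h''\bigr)\,d\phi=\int_\theta^{\theta+\pi}h\,d\phi + h'(\theta+\pi)-h'(\theta).
\]
The area $A(\theta)$ on one side of the chord can be computed from the polar-type formula $\tfrac12\int h\,(h+h'')\,d\phi$ over the arc, corrected by the triangle coming from the chord. Then I differentiate in $\theta$, using $h(\theta+\pi)=w-h(\theta)$ and $h'(\theta+\pi)=-h'(\theta)$. The expectation is that $\ell'(\theta)$ and $A'(\theta)$ are both proportional to the same quantity $d(\theta)=h(\theta+\pi)-h(\theta)$, up to a sign and a factor depending on the position of the chord's midpoint. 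For example, $\ell'(\theta)=h(\theta+\pi)-h(\theta)+h''(\theta+\pi)-h''(\theta)$, and the curvature-radius terms should combine through the constant-width relation. A cleaner route is to compare the two sides of the chord directly: the half-perimeter difference and the half-area difference are each controlled by the same function. Since $\ell(\theta)+\ell(\theta+\pi)=L$, bisection of the perimeter means $\ell\equiv L/2$, which should be equivalent to $d\equiv$ const in the appropriate sense, and the same holds for area. Pinning down this common function is the key computation. I expect it to reduce to the statement that $h-w/2$, the odd part relative to a center, has a particular Fourier structure.

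\textbf{(i)$\Leftrightarrow$(ii).} Suppose every diametral chord bisects the perimeter. The diametral chords then form a continuous one-parameter family of perimeter-bisecting chords, and by (ii) this family coincides with the full family of perimeter bisectors: for each starting point there is exactly one bisecting chord and exactly one diametral chord, and they agree. All of them have length $w$, so $\kappa$ is Zindler. Conversely, if $\kappa$ is Zindler, every perimeter-bisecting chord has the same length $c$. The bisecting chords sweep through every direction, so some bisecting chord has length equal to a width in its direction, and we get $c\le w$. I would then show that a bisector which is not diametral forces $c<w$ in its direction, and that the family of bisectors, with their lengths constant, must realize the width. Concretely, I would use the envelope (midpoint) property of Zindler curves: the bisecting chords envelope a curve and their midpoints move tangentially. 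Combined with constant width, this should force the bisector at $x(\theta)$ to be normal to $\kappa$, and hence diametral.

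\textbf{Main obstacle.} The delicate step is (i)$\Rightarrow$(ii), showing that constant bisector length together with constant width forces the bisectors to be diametral. My first attempt is the variational argument just described. For bisecting chords $[p(s),q(s)]$ with $|q-p|$ constant, differentiating gives $\langle q'-p',\,q-p\rangle=0$, and bisection gives $|p'|=|q'|$. Together these say the chord makes equal angles with $\kappa$ at its two ends. At a chord realizing the maximal length $w$ the chord is normal at both ends. The remaining task is to propagate normality to all bisecting chords, using the equal-angle ODE and strict convexity.
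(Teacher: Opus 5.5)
Your argument for (ii)$\Rightarrow$(i) is fine, but (i)$\Rightarrow$(ii) is not proved, and your plan for it misses the idea that makes it short. The (ii)$\Rightarrow$(i) argument only uses two facts: every point lies on \emph{some} diametral chord, and the perimeter-bisecting chord through a point is unique.

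For (i)$\Rightarrow$(ii), the step ``the bisecting chords sweep through every direction, so some bisecting chord has length equal to a width in its direction'' does not follow. The bound $c\le w$ you draw from it holds for every chord and says nothing. Your equal-angle relation holds on every Zindler curve, so the propagation argument can only start once you already have one bisecting chord of length $w$. Producing that chord is exactly the missing step.

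The fix is to run the intermediate value theorem on the \emph{diametral} chords instead. Let $\ell(\theta)$ be the length of the arc from $x(\theta)$ to $x(\theta+\pi)$. Then $\ell(\theta)+\ell(\theta+\pi)=\pi w$, so the continuous function $\ell-\tfrac{\pi w}{2}$ changes sign on $[\theta,\theta+\pi]$, and some diametral chord bisects the perimeter. Hence $c=w$, and no propagation is needed:
\begin{itemize}
\item every bisecting chord has length $w=w^{+}(\kappa)$, so it is a diametral segment (Lemma~\ref{width_equals_diameter});
\item bisecting chords occur in every direction, since the chord from $p(s)$ to $p(s+L/2)$ reverses orientation as $s$ advances by $L/2$;
\item the diametral chord in each direction is unique (Lemma~\ref{width_equals_diameter}(c)).
\end{itemize}
So every diametral chord bisects the perimeter; this is the paper's proof. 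Use uniqueness per \emph{direction} here, not per point. Your opening claim that each boundary point lies on exactly one diametral chord fails at corners: a vertex of a Reuleaux triangle lies on infinitely many. Lemma~\ref{normals are diameters in constant-width curves} gives one diametral chord per outer normal, not per point.

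Your treatment of (iii) is only a heuristic (``the expectation is'', ``should combine''), and the common quantity is misidentified. It is not $d=h(\theta+\pi)-h(\theta)$ but $\rho-\tfrac w2=-\tfrac12(d+d'')$, where $\rho=h+h''$. The computation can be completed as follows.
\begin{itemize}
\item Since $h(\theta)+h(\theta+\pi)=w$ gives $\rho(\theta)+\rho(\theta+\pi)=w$ with $\rho\ge0$, one has $0\le\rho\le w$. So $h$ is automatically $C^{1,1}$ and no approximation is needed.
\item Then $\ell'(\theta)=w-2\rho(\theta)$ a.e.
\item Let $A(\theta)$ be the area cut off on the side of the arc with normals in $[\theta,\theta+\pi]$. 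The chord point $x(\theta)-s\,u(\theta)$ moves with normal speed $\rho(\theta)-s$, so $A'(\theta)=-\int_0^w(\rho(\theta)-s)\,ds=\tfrac w2\,\ell'(\theta)$.
\item Combine this with $A(\theta)+A(\theta+\pi)=|\mathcal K|$ (the enclosed area) and $\ell(\theta)+\ell(\theta+\pi)=L$. This yields $A(\theta)-\tfrac12|\mathcal K|=\tfrac w2\bigl(\ell(\theta)-\tfrac12 L\bigr)$, which is (ii)$\Leftrightarrow$(iii).
\end{itemize}
In fact, $\ell'=w-2\rho$ shows that (ii) alone forces $\rho\equiv\tfrac w2$, i.e.\ a circle. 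Alternatively, follow the paper: repeat the intermediate-value argument with areas, using the equivalence of the perimeter and area versions of the Zindler condition.
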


\begin{proof} Define a map $\gamma : S^1 \to \kappa$ that takes $\theta$ to the unique point on $\kappa$ at which $\theta$ is an outward-pointing normal vector. Identify $S^1$ with $\real / 2\pi\integer$.  Then, $\gamma(\theta)$ and $\gamma(\theta + \pi)$ are always connected by a diametral chord. Suppose (i) holds. Since $L(\gamma|_{[\theta,\theta + \pi]})$ is a continuous function of $\theta$ and $L(\gamma|_{[\theta,\theta + \pi]}) + L(\gamma|_{[\theta + \pi,\theta + 2\pi]}) = \pi w$, the intermediate value theorem implies that some diametral chord bisects the perimeter. By the Zindler condition, all such bisecting chords have length $w$, so they are all diametral. Since diametral chords are unique in each direction, this proves (ii). A similar argument shows that (i) implies (iii). Since $\kappa$ has constant width, it is clear that (ii) implies (i) and that (iii) implies (i). \end{proof}

\noindent By Lemma \ref{zindler_lemma}, Theorem \ref{besicovitch_smith_thm} may be restated as follows. Under additional regularity assumptions, this was proved earlier by Geppert \cite{Geppert1940}.

\begin{corollary}\label{zindler_corollary}
Every Zindler curve of constant width is a circle. 
\end{corollary}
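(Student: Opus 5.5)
The corollary should follow almost immediately by combining Lemma \ref{zindler_lemma} with Theorem \ref{besicovitch_smith_thm}. Let $\kappa$ be a Zindler curve of constant width $w$.

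The first step applies the implication (i) $\implies$ (ii) of Lemma \ref{zindler_lemma}. This shows that every diametral chord of $\kappa$ bisects its perimeter. The mechanism behind that implication is worth recording, since it carries the argument. Let $\gamma(\theta)$ be the point of $\kappa$ with outward normal $\theta$. Because $\kappa$ is strictly convex, being of constant width, $\gamma$ is well defined. The quantity $L(\gamma|_{[\theta,\theta+\pi]})-\tfrac{\pi w}{2}$ is continuous in $\theta$, and it changes sign when $\theta$ is replaced by $\theta+\pi$. So some diametral chord bisects the perimeter. Under the Zindler condition all perimeter-bisecting chords have the same length, so every bisecting chord has length $w$. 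A chord of length $w$ in a curve of width $w$ is diametral, and there is exactly one diametral chord in each direction. For each direction $\theta$, the perimeter-bisecting chord starting at $\gamma(\theta)$ therefore coincides with the diametral chord from $\gamma(\theta)$. Hence every diametral chord bisects the perimeter.

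The second step invokes Theorem \ref{besicovitch_smith_thm} directly. A closed planar curve of constant width all of whose diametral chords bisect its perimeter is a circle, so $\kappa$ is a circle.

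The only point needing care is in the first step: a bisecting chord of length $w$ must actually be diametral. This holds because the chord's length cannot exceed the width in its own direction, which is at most $w$, and equality forces the chord to be diametral. The genuinely deep content, the rigidity statement, is contained entirely in the cited Besicovitch--Smith theorem, so no further obstacle is expected.
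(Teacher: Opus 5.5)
Your proof is correct and matches the paper's, which simply restates Theorem \ref{besicovitch_smith_thm} using the implication (i) $\Rightarrow$ (ii) of Lemma \ref{zindler_lemma}. One caution about your recap of that implication: uniqueness of diametral chords in each \emph{direction} does not by itself show that the bisecting chord from $\gamma(\theta)$ is $[\gamma(\theta),\gamma(\theta+\pi)]$, since a priori $\gamma(\theta)$ could be a corner with many diametral chords; the clean fix is to sweep lines parallel to an arbitrary direction $u$ and use the intermediate value theorem to get a perimeter-bisecting chord parallel to $u$, which must then be the unique diametral chord in direction $u$ (because you cite the lemma, this does not affect your proof).
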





\begin{proof}[{\em 3.3.} \textbf{\em Proof of Theorem \ref{slicing property implies round}}]
Let $M^n$ be a closed, convex hypersurface in $\real^{n+1}$ with long diametrical slices. By Lemma \ref{constant width}, $M$ has constant width $w = w^{+}(M)$. Suppose first that $M$ is $C^{1,1}$, and define $\alpha:M \to M$ to be the map taking each $x \in M$ to its unique diametrically opposed point, as in Lemma \ref{normals are diameters in constant-width hypersurfaces}. That is, $\alpha(x) = x - w N(x)$. Lemma \ref{slices are closed geodesics} implies that each $2$-plane $\Pi$ containing $x$ and $\alpha(x)$ slices $M$ in a closed geodesic, both of whose segments minimize between $y$ and $\alpha(y)$ for all $y \in \Pi \cap M$. Each diametrical slice is therefore a Zindler curve of constant width $w$ and, by Corollary \ref{zindler_corollary}, must be a circle of radius $\frac{w}{2}$. Any such circle containing $x \in M$ is centered at $\hat{x} = x - \frac{w}{2} N(x)$. As each $y \in M$ belongs to some $2$-plane containing a diametrical slice through $x$, the point $\hat{x}$ coincides with the point $\hat{y}$ corresponding to $y$. All such circles are therefore concentric, $\alpha$ is a reflection through $\hat{x}$, and $M$ is centrally symmetric. Since its support function must be of the form $f(x) = \frac{w}{2} + \langle x, \hat{x} \rangle$, $M$ is a round sphere.
	
In the general case, for each $\varepsilon > 0$, let $M_\varepsilon$ be the outer parallel hypersurface to $M$ at distance $\varepsilon$, i.e., $M_\varepsilon = \{ x + \varepsilon u_x \mid u_x \in N(M) \}$. Then, $M_\varepsilon$ is $C^{1,1}$ \cite[Theorem 4.8, Corollary 4.9]{Federer1959} and, because $M$ has constant width and the support function $h_{M_{\varepsilon}}$ of $M_{\varepsilon}$ is given by $h_{M} + \varepsilon$, has constant width equal to $w + 2\varepsilon$. Lemmas \ref{width stability}, \ref{normal vectors agree}, and \ref{normals are diameters in constant-width hypersurfaces} imply that, for each $2$-plane $\Pi$ that meets $M$ in a diametrical slice, $N(\Pi \cap M) = N(M) \cap T_\Pi \real^{n+1}$; therefore, $\Pi \cap M_\varepsilon$ agrees with the outward normal expansion of $\Pi \cap M$ by $\varepsilon$ in $\Pi$. So, $\Pi \cap M_{\varepsilon}$ is a diametrical slice of $M_{\varepsilon}$ with constant width $w + 2\varepsilon$. By Barbier's theorem, $L(\Pi \cap M_\varepsilon) = \pi(w + 2\varepsilon)$. Therefore, $M_{\varepsilon}$ has long diametrical slices and, by the argument above, is a round sphere. Because all $M_\varepsilon$ are round spheres, $M$ is as well. \end{proof}


\section{Mean width and the Birkhoff invariant}
\label{mean_width_section}


In this section, we prove Theorem \ref{mean_width_theorem}. If $M$ is a topological manifold, denote by $\Omega(M)$ and $\Omega^{0}(M)$ its free loop space and the space of constant loops in $M$, respectively, and let $B^k$ denote the closed unit ball in $\real^k$ and $S^{k-1}$ its boundary. The following definition of a sweepout is drawn from \cite{ChengAlshawa2025}.

\begin{definition}\label{birkhoff invariant}
Let $M$ be a topological manifold endowed with a length metric. A \textbf{sweepout} of $M$ is a map $\Phi : B^k \to \Omega(M)$ that takes $S^{k-1}$ into $\Omega^{0}(M)$ and represents a nontrivial element of the relative homotopy group $\pi_k(\Omega(M), \Omega^{0}(M))$, where $k \geq 1$ is the smallest integer such that $\pi_k(\Omega(M), \Omega^{0}(M))$ is nontrivial. Its \textbf{length} is $L(\Phi) = \max \,\{ L(\Phi(x)) \mid x \in B^k \}$. The \textbf{Birkhoff invariant} of $M$ is $\beta(M) = \inf \,\{ L(\Phi) \mid \Phi \textrm{ is a sweepout of } M \}$.
\end{definition}


\subsection{Hausdorff Distance and the Birkhoff Invariant}

Recall that the \textbf{Hausdorff distance} between nonempty compact subsets $X$ and $Y$ of $\real^{n+1}$ is
\[
d_H(X,Y) = \max \big\{ \displaystyle \sup_{x \in X} d(x,Y), \displaystyle \sup_{y \in Y} d(y,X) \big\}.
\]
This satisfies the axioms of a metric on the set of nonempty, compact subsets of $\real^{n+1}$ and defines a complete metric, the \textbf{Hausdorff metric}.

\begin{lemma}\label{width_continuity}
For closed, convex hypersurfaces $M,N$ in $\real^{n+1}$ and $u \in S^{n}$, the following hold:
\begin{itemize}
\item[\textbf{(a)}] $|h_{M}(u) - h_{N}(u)| \leq d_H(M,N)$
\item[\textbf{(b)}] $|w(M,u) - w(N,u)| \leq 2d_H(M,N)$
\end{itemize}
In particular, mean width is $2$-Lipschitz continuous with respect to the Hausdorff metric, and, for planar convex curves, length is $2\pi$-Lipschitz continuous, with the convention that the length of a line segment is counted twice if it occurs as a limit of curves bounding open sets. 
\end{lemma}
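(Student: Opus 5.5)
Part (a) comes first and directly from the definition of the support function. Set $d = d_H(M,N)$ and fix $u \in S^n$. Choose $x \in M$ with $h_M(u) = \langle x, u\rangle$. By definition of Hausdorff distance there is $y \in N$ with $|x - y| \leq d$. Then
\[
h_M(u) = \langle x,u\rangle = \langle y,u\rangle + \langle x-y,u\rangle \leq h_N(u) + d.
\]
Exchanging the roles of $M$ and $N$ gives $h_N(u) \leq h_M(u) + d$, and together these give (a). One small point: the support function uses the maximum over $M$. For a closed convex hypersurface this agrees with the maximum over the convex body it bounds, so it does not matter whether $d_H$ is computed between the hypersurfaces or between the bodies.

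For (b), apply \eqref{width_support}, $w(M,u) = h_M(u) + h_M(-u)$, and the triangle inequality:
\[
|w(M,u) - w(N,u)| \leq |h_M(u) - h_N(u)| + |h_M(-u) - h_N(-u)| \leq 2d.
\]

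The two consequences follow by integration. Averaging (b) over $S^n$ gives $|\Xi(M) - \Xi(N)| \leq \frac{1}{\sigma_n}\int_{S^n} 2d\,du = 2d$, so mean width is $2$-Lipschitz. For planar convex curves, Theorem~\ref{Crofton's formula} gives $L = \pi \Xi$, and hence length is $2\pi$-Lipschitz.

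The only delicate step is the degenerate case, where the planar convex set is a segment. Its width function is still given by $h(u) + h(-u)$, and (a) and (b) still hold for it verbatim, since they used only support functions. Its mean width equals $\frac{1}{2\pi}\int_{S^1} \ell\,|\cos\theta|\,d\theta = \frac{2\ell}{\pi}$, where $\ell$ is the length of the segment. So $\pi\,\Xi = 2\ell$, which is the segment counted twice. This agrees with the stated convention, and it agrees with limits of curves bounding open sets, because by the Lipschitz bound just proved the lengths $\pi\,\Xi$ converge. I expect no real obstacle here beyond stating this convention carefully.
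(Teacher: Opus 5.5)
Your proposal is correct and follows essentially the same route as the paper's proof. The paper phrases (a) as a contradiction, using the estimate $|x-x'|\ge\langle x-x',u\rangle$ for a hypothetical point $x\in M$ with $\langle x,u\rangle>h_N(u)+d_H(M,N)$, whereas you pick a nearby point $y\in N$ and bound directly, which is the same estimate; (b) and the two consequences via \eqref{width_support}, averaging, and Crofton's formula are identical, and your computation $\pi\Xi=2\ell$ for a segment verifies the convention the paper only states.
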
 

\begin{proof} If $x \in M$ were a point with $\langle x,u \rangle > h_{N}(u) + d_H(M,N)$, then, for all $x' \in N$, $|x - x'| \geq |\langle x - x', u \rangle | = \langle x,u \rangle - \langle x',u \rangle > d_H(M,N)$, contradicting the definition of $d_H(M,N)$. Reversing the roles of $M$ and $N$ implies (a), and (b) follows from (a) and the identity $w(M,u) = h_{M}(u) + h_{M}(-u)$. The Lipschitz continuity of mean width follows immediately from its definition, and the Lipschitz continuity of the length of plane curves follows from Theorem \ref{Crofton's formula}. \end{proof}

\begin{lemma}\label{radius_continuity} 
For a closed, convex hypersurface $M$ in $\real^{n+1}$, let $\rho(M)$ be its \textbf{inradius}, that is, the largest $r \geq 0$ such that the domain $\mathcal{K}$ bounded by $M$ contains a ball of radius $r$. Then, $\rho$ and the Chebyshev radius $R_{c}$ are both $1$-Lipschitz continuous with respect to the Hausdorff metric. 
\end{lemma}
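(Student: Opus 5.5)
We must show that $|\rho(M)-\rho(N)| \le d_H(M,N)$ and $|R_c(M)-R_c(N)| \le d_H(M,N)$. Let $\mathcal{K}_M,\mathcal{K}_N$ be the convex bodies bounded by $M,N$, and set $d = d_H(M,N)$. The plan is to reduce both statements to the support function estimate in Lemma \ref{width_continuity}(a), which gives $|h_M(u)-h_N(u)|\le d$ for every $u\in S^n$. Since a compact convex body is determined by its support function, this is equivalent to the inclusions $\mathcal{K}_M \subseteq \mathcal{K}_N + dB^{n+1}$ and $\mathcal{K}_N \subseteq \mathcal{K}_M + dB^{n+1}$. Both radii are symmetric in $M$ and $N$, so it suffices to prove one inequality in each case.

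For the Chebyshev radius, let $B(c,R)$ be the minimal enclosing ball of $N$, with $R=R_c(N)$. Every $x\in M$ lies within $d$ of some point of $N$, and hence within $R+d$ of $c$. Therefore $M\subseteq B(c,R+d)$, which gives $R_c(M)\le R_c(N)+d$. Exchanging $M$ and $N$ yields the reverse inequality.

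For the inradius, let $B(c,r)\subseteq \mathcal{K}_N$ with $r=\rho(N)$. If $r\le d$, the bound $\rho(M)\ge r-d$ holds trivially, since $\rho(M)\ge 0$. Otherwise we claim that $B(c,r-d)\subseteq\mathcal{K}_M$. The support function of $B(c,r-d)$ is $\langle c,u\rangle + r-d$. Since $B(c,r)\subseteq\mathcal{K}_N$, we have $\langle c,u\rangle + r \le h_N(u)$, and Lemma \ref{width_continuity}(a) then gives $\langle c,u\rangle + r-d \le h_M(u)$ for all $u$. A ball whose support function is dominated by $h_M$ lies in $\mathcal{K}_M$, because $\mathcal{K}_M$ is the intersection of its supporting half-spaces. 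This proves $\rho(M)\ge\rho(N)-d$, and symmetry completes the argument.

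The step needing the most care is the inradius claim. The difficulty is that Hausdorff closeness of the boundaries $M$ and $N$ does not by itself confine a ball inside $\mathcal{K}_M$; a naive pointwise argument on the surfaces fails here. The obstacle is resolved by working with support functions, using Lemma \ref{width_continuity}(a) together with the half-space description of convex bodies. The Chebyshev case is routine.
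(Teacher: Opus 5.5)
Your proof is correct and takes essentially the same route as the paper. The Chebyshev-radius argument is identical. For the inradius, the paper places the maximal inscribed ball of one body inside the outer parallel body $\mathcal{K}_N + d_H(M,N)B^{n+1}$ of the other and then shrinks it by $d_H(M,N)$, which is your inequality $\langle c,u\rangle + r \le h_N(u) \le h_M(u) + d$ read as an inclusion of convex bodies (with the roles of $M$ and $N$ swapped). Your half-space formulation makes explicit the cancellation step that the paper compresses into ``it follows that.''
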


\noindent Note that, unlike the closed ball of radius $R_{c}(M)$ bounding a convex hypersurface $M$, a ball of radius $\rho(M)$ in the domain bounded by $M$ might not be unique. 

\begin{proof}[Proof of Lemma \ref{radius_continuity}] To establish the result for the inradius, suppose that a ball $B(x,\rho(M))$ is contained within the region bounded by $M$. Since the outward normal expansion of $N$ by $d_H(M,N)$ (i.e., the Minkowski sum of the region bounded by $N$ and a ball of radius $d_H(M,N)$) contains $M$, it also contains $B(x,\rho(M))$. If $\rho(M) > d_H(M,N)$, then $x$ must be contained within the region bounded by $N$. It follows that $B(x, \rho(M) - d_H(M,N))$ is contained within the region bounded by $N$, so $\rho(N) \geq \rho(M) - d_H(M,N)$. This inequality also clearly holds if $\rho(M) \leq d_H(M,N)$. In the same way, $\rho(M) \geq \rho(N) - d_H(M,N)$. So, $|\rho(N) - \rho(M)| \leq d_H(M,N)$. To establish the result for the Chebyshev radius, let $q$ be the center of the unique closed ball of radius $R_{c}(M)$ containing $M$. Because $N$ is contained in the $d_H(M,N)$-neighborhood of $M$, it is contained in the ball of radius $R_{c}(M) + d_H(M,N)$ about $q$, which implies that $R_{c}(N) \leq R_{c}(M) + d_H(M,N)$. Similarly, $R_{c}(M) \leq R_{c}(N) + d_H(M,N)$. So, $|R_{c}(N) - R_{c}(M)| \leq d_H(M,N)$.
\end{proof}

\begin{lemma}\label{birkhoff inradius bound}
Suppose $M^n$ and $N^n$ are closed, convex hypersurfaces in $\real^{n+1}$. If $d_H(M,N) < \varepsilon < \min \{ \rho(M), \rho(N) \}$, then $|\beta(M) - \beta(N)| < \varepsilon \cdot \max \big\{ \frac{\beta(M)}{\rho(M)}, \frac{\beta(N)}{\rho(N)} \big\}$.
\end{lemma}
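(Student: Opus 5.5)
Set $d = d_H(M,N)$. The plan is to compare the Birkhoff invariants through a Lipschitz map between $M$ and $N$ with controlled stretch. Suppose $d < \varepsilon < \min\{\rho(M),\rho(N)\}$. By symmetry, it is enough to show
\[
\beta(N) \leq \frac{\rho(M)+\varepsilon}{\rho(M)}\,\beta(M)\textrm{,} \qquad\textrm{which gives}\qquad \beta(N) - \beta(M) \leq \varepsilon\,\frac{\beta(M)}{\rho(M)}\textrm{.}
\]
Strictness will come from $d < \varepsilon$: run the argument with a scale $\varepsilon' \in (d,\varepsilon)$ in place of $\varepsilon$, so the bound holds with $\varepsilon'$ and hence strictly with $\varepsilon$, once we know $\beta(M) > 0$.

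Fix a ball $B(x_0,\rho(M))$ inside the domain $\mathcal{K}$ bounded by $M$. Consider the domain $\mathcal{K}'$ bounded by $N$. Since $N$ lies in the $d$-neighborhood of $M$, and $M$ is the boundary of the convex set $\mathcal{K}$, every point of $\mathcal{K}'$ lies within $d$ of $\mathcal{K}$. A convexity argument like the one in Lemma \ref{radius_continuity} then gives the homothety containment
\[
\mathcal{K}' \subseteq x_0 + \Big(1 + \frac{d}{\rho(M)}\Big)(\mathcal{K} - x_0)\textrm{.}
\]
To see this, note that $\mathcal{K} + B(0,d) \subseteq x_0 + \lambda(\mathcal{K}-x_0)$ with $\lambda = 1 + d/\rho(M)$. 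Indeed, $\lambda \mathcal{K}$ (centered at $x_0$) is the convex combination $\mathcal{K} + (\lambda - 1)(\mathcal{K} - x_0)$, and $(\lambda-1)(\mathcal{K}-x_0) \supseteq (\lambda-1)B(0,\rho(M)) = B(0,d)$. Also $x_0$ lies in the interior of $\mathcal{K}'$: since $d < \rho(M)$, the ball $B(x_0,\rho(M)-d)$ lies inside $\mathcal{K}'$, by the argument of Lemma \ref{radius_continuity}.

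Define $P : M \to N$ as radial projection from $x_0$, sending $x$ to the point where the ray from $x_0$ through $x$ meets $N$. This is a homeomorphism between the two convex hypersurfaces, since both are star-shaped with respect to the interior point $x_0$. A sweepout $\Phi$ of $M$ then yields $P\circ\Phi$. This is a sweepout of $N$: $P$ maps constant loops to constant loops, and being a homeomorphism it induces an isomorphism on $\pi_k(\Omega,\Omega^0)$, so the minimal $k$ agrees and nontriviality is preserved. The key claim is
\[
L(P\circ c) \leq \lambda\, L(c) \qquad\textrm{for every loop } c \textrm{ in } M\textrm{.}
\]
I would prove it by writing $P$ as the composite of two maps. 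The first is the homothety $H$ of ratio $\lambda$ about $x_0$, taking $M$ to $\lambda M$; it scales lengths by exactly $\lambda$. The second is the radial map from $\lambda M$ to $N$, where $N$ lies inside the region bounded by $\lambda M$. This second map is the restriction to $\lambda M$ of the nearest-point-like radial retraction, and we want it to be $1$-Lipschitz for curve lengths.

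This is the main obstacle: radial projection onto an inner convex body is not $1$-Lipschitz in general. The nearest-point projection onto $\mathcal{K}'$ is $1$-Lipschitz, but it does not map constant loops compatibly with a homeomorphism.

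What I would try first is the following. Curves on the outer surface $\lambda M$ project by metric (nearest-point) projection onto $\mathcal{K}'$. This projection lands in $N$ because it is applied to points outside $\mathcal{K}'$, and it is $1$-Lipschitz, so lengths do not increase. Although it need not be a homeomorphism, it is homotopic to $P\circ H^{-1}$ through maps $\lambda M \to N$, via the straight-line homotopy projected radially; this homotopy is well defined because $x_0$ is interior and segments between the two image points stay off $x_0$. It therefore induces the same map on relative homotopy, so $\pi_N \circ H \circ \Phi$ is still a sweepout of $N$. Its length is at most $\lambda L(\Phi)$, which gives $\beta(N) \leq \lambda\beta(M)$.

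I would then check that $\beta(M) > 0$, so that the strict inequality follows by replacing $d$ with any value in $(d,\varepsilon)$. I would also check that the constant-loop boundary condition survives the homotopy, which it does since all maps involved are pointwise maps of $M$.
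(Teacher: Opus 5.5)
Your argument is correct and is essentially the paper's. Both compose a homothety about the center of an inscribed ball of $M$ with the $1$-Lipschitz nearest-point projection onto a convex body; the paper projects $M$ onto the shrunken copy $\frac{\rho(M)}{\rho(M)+\varepsilon}N$ and then rescales, while you rescale $M$ by $1+d_H(M,N)/\rho(M)$ and then project onto $N$. Your explicit homotopy to the radial homeomorphism and your use of $d_H(M,N)<\varepsilon$ handle the sweepout property and the strict inequality more carefully than the paper, whose written argument yields only $\leq$; the one item you leave open, $\beta(M)>0$, follows because nearest-point projection onto the inscribed sphere $\partial B(x_0,\rho(M))$ restricts to a $1$-Lipschitz homeomorphism on $M$, giving $\beta(M)\geq\beta\big(\partial B(x_0,\rho(M))\big)>0$.
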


\begin{proof} Let $p$ be any point in the interior of the region $\mathcal{K}$ bounded by $M$ such that the closed ball of radius $\rho(M)$ around $p$ is contained within $\mathcal{K}$. Without loss of generality, translate $M$ and $N$ so that $p$ is the origin. For $\eta = \frac{\rho(M)}{\rho(M) + \varepsilon}$, the closed, convex hypersurface
\[
N_\eta = \eta N = \{ \eta x \,|\, x \in N \}
\]
lies entirely in the interior of $\mathcal{K}$. The projection onto $N_\eta$ from its exterior is $1$-Lipschitz, so, if $\Phi$ is any sweepout of $M$, projecting it onto $N_\eta$ produces a sweepout $\Phi_\eta$ of $N_\eta$ such that $L(\Phi_\eta) \leq L(\Phi)$. The map from $N_\eta$ to $N$ that scales by $\frac{1}{\eta}$ has Lipschitz constant $1 + \frac{\varepsilon}{\rho(M)}$, so it takes $\Phi_\eta$ to a sweepout $\tilde{\Phi}_\eta$ of $N$ such that $L(\tilde{\Phi}_\eta) \leq \big( 1 + \frac{\varepsilon}{\rho(M)} \big) L(\Phi)$. It follows that $\beta(N) \leq \big(1 + \frac{\varepsilon}{\rho(M)} \big) \beta(M)$. Thus,
\[
\beta(N) - \beta(M) \leq \varepsilon \frac{\beta(M)}{\rho(M)}.
\]
In the same way, one may bound $\beta(M) - \beta(N)$ from above by $\varepsilon \frac{\beta(N)}{\rho(N)}$. \end{proof}

\begin{lemma}\label{birkhoff continuity}
The Birkhoff invariant is continuous on the set of closed, convex hypersurfaces in $\real^{n+1}$ with respect to the Hausdorff metric.
\end{lemma}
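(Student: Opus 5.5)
The plan is to deduce continuity directly from Lemma \ref{birkhoff inradius bound}, combined with the continuity of the inradius from Lemma \ref{radius_continuity}. Fix a closed, convex hypersurface $M$ in $\real^{n+1}$ and a sequence (or net) of closed, convex hypersurfaces $N_j$ with $d_H(M,N_j) \to 0$. We need $\beta(N_j) \to \beta(M)$.

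First I will record that $\rho(M) > 0$. A closed convex hypersurface bounds a convex body with nonempty interior; otherwise it would lie in a hyperplane and fail to be a hypersurface. By Lemma \ref{radius_continuity}, $\rho(N_j) \to \rho(M)$. Hence for all large $j$ we have $\rho(N_j) > \rho(M)/2$.

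Next I need a uniform upper bound on $\beta(N_j)$. Lemma \ref{birkhoff inradius bound} itself supplies this once a single comparison is in place. Fix $\varepsilon_0 = \rho(M)/4$. For $j$ large, $d_H(M,N_j) < \varepsilon_0 < \min\{\rho(M),\rho(N_j)\}$. The one-sided estimate in the proof of that lemma gives
\[
\beta(N_j) \leq \Big(1 + \frac{\varepsilon_0}{\rho(M)}\Big)\beta(M) = \frac{5}{4}\beta(M).
\]
This uses finiteness of $\beta(M)$, which holds because some sweepout of finite length exists; for instance, one can radially project a sweepout of a round sphere by planar circles. Consequently,
\[
\frac{\beta(N_j)}{\rho(N_j)} \leq \frac{5\beta(M)/4}{\rho(M)/2}.
\]
So $K := \max\{\beta(M)/\rho(M), \sup_j \beta(N_j)/\rho(N_j)\}$ is finite.

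Finally, given any $\varepsilon \in (0, \varepsilon_0)$, choose $j$ large enough that $d_H(M,N_j) < \varepsilon$. Lemma \ref{birkhoff inradius bound} then gives $|\beta(M) - \beta(N_j)| < K\varepsilon$. Letting $\varepsilon \to 0$ yields continuity.

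The only delicate point is the uniform bound on $\beta(N_j)/\rho(N_j)$, which the statement of Lemma \ref{birkhoff inradius bound} implicitly needs. This is why I will first use its one-sided inequality. A further minor check is that $\beta(M)$ is finite and that the lemma applies to hypersurfaces that are not smooth; both follow from the Lipschitz projection argument already used in that proof.
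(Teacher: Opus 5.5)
Your proposal is correct and is essentially the paper's argument: the paper also uses Lemma \ref{radius_continuity} to keep $\rho(M_k)\ge(1-\varepsilon)\rho(M)$, and applies the one-sided estimate from Lemma \ref{birkhoff inradius bound} to get $\beta(M_k)\le(1+\varepsilon)\beta(M)$. It then applies that lemma again to obtain $|\beta(M_k)-\beta(M)|\le 2\varepsilon(1+\varepsilon)\beta(M)$; your additional checks that $\rho(M)>0$ and $\beta(M)<\infty$ are points the paper leaves implicit.
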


\begin{proof} Let $M_k^n$ be a sequence of closed, convex hypersurfaces in $\real^{n+1}$ such that $d_H(M_k,M) \to 0$, where $M$ is a closed, convex hypersurface in $\real^{n+1}$. For $0 < \varepsilon < \frac{1}{2}$, let $K$ be large enough that $d_H(M_k,M) \leq \varepsilon \rho(M)$ for all $k \geq K$. For all such $k$, $|\rho(M_k) - \rho(M)| \leq d_H(M_k,M) \leq \varepsilon \rho(M)$, where the first inequality follows from Lemma \ref{radius_continuity}. So, $\rho(M_k) \geq (1 - \varepsilon) \rho(M)$. Applying Lemma \ref{birkhoff inradius bound} twice shows that
\[
|\beta(M_k) - \beta(M)| \leq \varepsilon \cdot \max \{ \beta(M), 2 \beta(M_k) \} \leq 2\varepsilon(1+\varepsilon) \beta(M).
\]
It follows that $\beta(M_k) \to \beta(M)$. \end{proof}


\subsection{The Spherical Radon Transform}

\noindent Theorem \ref{Crofton's formula} implies that, whenever a convex hypersurface $M$ in $\real^{n+1}$ has constant width $w$, the boundary $\partial \phi_{\Pi}(M)$ of the projection of $M$ onto any $2$-plane $\Pi$ has length $\pi w$. A classical theorem of Minkowski \cite{Minkowski1904} gives a converse to this statement for convex surfaces in $\real^{3}$: if a convex surface $M$ has the property that, for any $2$-plane $\Pi$ in $\real^{3}$, the length of  $\partial \phi_{\Pi}(M)$ is equal to a constant $\xi$, then $M$ has constant width $\frac{\xi}{\pi}$. The aim of this subsection is to prove the following stable version of this result for convex hypersurfaces in all dimensions.

\begin{lemma}\label{nearly_constant_width_lemma}
Let $R$ and $\varepsilon$ be positive numbers. Then, there exists $\delta > 0$ such that the following holds: if $M^{n}$ is a closed, convex hypersurface in $\real^{n+1}$, $n \geq 2$, such that $R_c(M) \leq R$ and, for every $\Pi \in \mathrm{Gr}(2,n+1)$, $|L\left( \partial \phi_{\Pi}(M) \right) - \xi| < \delta$, then $w^{+}(M) - \varepsilon < \frac{\xi}{\pi} < w^{-}(M) + \varepsilon$. In particular, if a convex hypersurface $M$ in $\real^{n+1}$ has $L\left( \partial \phi_{\Pi}(M) \right) = \xi$ for all $\Pi \in \mathrm{Gr}(2,n+1)$, then $M$ has constant width $\frac{\xi}{\pi}$. 
\end{lemma}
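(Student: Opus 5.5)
Throughout I use that the length of a projection is determined by the width function. By Crofton's formula (Theorem \ref{Crofton's formula}) applied to the planar convex curve $\partial \phi_{\Pi}(M)$, and since $w(\partial\phi_\Pi(M),u) = w(M,u)$ for $u$ parallel to $\Pi$,
\[
L(\partial \phi_{\Pi}(M)) = \frac{1}{2}\int_{S^1_\Pi} w(M,u)\,du .
\]
This is, up to a constant, the spherical Radon transform of the even function $w(M,\cdot)$, restricted to great circles of $S^n$. The exact statement then follows from injectivity of the Radon transform on even functions. If the great-circle averages of $w$ are all $\xi/\pi$, then so are the averages over every great $k$-sphere, since these are obtained by averaging the circle averages. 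In particular the averages over great $(n-1)$-spheres are constant. The classical Funk--Minkowski injectivity on even continuous functions then gives $w \equiv \xi/\pi$.

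For the stable version I would argue by contradiction and compactness rather than seek explicit constants. Suppose the lemma fails for some $R,\varepsilon$. Then there are closed convex hypersurfaces $M_k$ with $R_c(M_k)\le R$, numbers $\xi_k$, and $\delta_k\to 0$ such that:
\begin{itemize}
\item[(i)] $|L(\partial\phi_\Pi(M_k))-\xi_k|<\delta_k$ for every $\Pi$;
\item[(ii)] either $w^+(M_k)\ge \xi_k/\pi+\varepsilon$ or $w^-(M_k)\le \xi_k/\pi-\varepsilon$.
\end{itemize}

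First, translate each $M_k$ so that its Chebyshev center is at the origin. The widths $w(M_k,\cdot)$ are then bounded by $2R$ and are $2R$-Lipschitz, as noted after Definition \ref{chebyshev_radius}. The $\xi_k$ are bounded, since $\xi_k\le L(\partial\phi_\Pi(M_k))+\delta_k\le 2\pi R+\delta_k$. By Arzel\`a--Ascoli, after passing to a subsequence, $w(M_k,\cdot)\to w_\infty$ uniformly and $\xi_k\to\xi$.

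Alternatively, Blaschke selection gives a Hausdorff limit convex body $K$, possibly degenerate, with $w(M_k,\cdot)\to w(K,\cdot)$ by Lemma \ref{width_continuity}. I would work directly with $w_\infty$, because only the Radon transform is needed and this avoids the degenerate case.

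Uniform convergence passes through the integral formula above, so every great-circle integral of $w_\infty$ equals $2\xi$. Also $w_\infty$ is even, continuous and Lipschitz. By the exact argument, $w_\infty\equiv \xi/\pi$. Uniform convergence then forces $w^\pm(M_k)=\max/\min\, w(M_k,\cdot)\to\xi/\pi$, contradicting (ii). This yields $\delta$.

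The main obstacle is the injectivity step, which must hold in all dimensions $n\ge 2$. For $n=2$ it is Minkowski's theorem: great-circle transforms on $S^2$ are injective on even functions. For general $n$ I would reduce to this case by restricting to each great $2$-sphere $S^2\subset S^n$. Each great circle of that $S^2$ is a great circle of $S^n$, so the restriction of $w_\infty$ to $S^2$ has constant great-circle integrals. The $n=2$ case then makes the restriction constant, and every point of $S^n$ lies in such an $S^2$. Hence I only need the injectivity on $S^2$. I would cite it from Schneider or Helgason, or prove it with spherical harmonics: the Funk transform multiplies a degree-$2m$ harmonic by $P_{2m}(0)\neq 0$.
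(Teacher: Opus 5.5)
Your proof is correct and follows essentially the same route as the paper. Crofton's formula turns projection lengths into great-circle integrals of the even, $2R$-Lipschitz, bounded width function, and Arzel\`a--Ascoli compactness combined with injectivity of the spherical Radon transform on even functions yields the uniform $\delta$. The differences are only packaging: the paper phrases the compactness step as ``a continuous bijection from the compact set $\mathscr{L}^n_{R,C}$ is a homeomorphism onto its image'' (Lemmas \ref{radon transform} and \ref{inverse radon transform}) and cites Helgason for injectivity on $S^n$ directly, whereas you run a sequential contradiction argument and reduce injectivity to the $S^2$ case.
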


\noindent The proof of Lemma \ref{nearly_constant_width_lemma} uses the $1$-dimensional spherical Radon transform. This is the linear map $f \mapsto \hat{f}$ taking continuous $f : S^n \to \real$ to $\hat{f} : \mathrm{Gr}(2,n+1) \to \real$, where $\hat{f} : \mathrm{Gr}(2,n+1) \to \real$ is defined via the identification of $\Pi \in \mathrm{Gr}(2,n+1)$ with the great circle $\zeta = \Pi \cap S^n$ by
\[
\hat{f}(\zeta) = \int_\zeta f(\theta) \,d\theta. 
\]
The kernel of this transformation is the set of odd maps \cite[Theorem 3.1.7]{Helgason1999}, so it is injective on even maps.

\begin{lemma}\label{radon transform}
For any positive numbers $R$ and $C$, let
\[
\mathscr{L}^n_{R,C} = \{ f : S^n \to \real \,|\, f \textrm{ is even and } R \textrm{-Lipschitz and } |f(x)| \leq C \textrm{ for all } x \in S^n \}.
\]
Then, $\mathscr{L}^n_{R,C}$ is compact in the uniform norm $\| \cdot \|_{\infty}$, and the restriction of the Radon transform to $\mathscr{L}^n_{R,C}$ is a homeomorphism onto its image (with respect to the topology given by the uniform norm in the domain and codomain).
\end{lemma}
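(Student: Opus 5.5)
Compactness follows from the Arzel\`a--Ascoli theorem, and the homeomorphism claim from the general principle that a continuous injection from a compact space into a Hausdorff space is a homeomorphism onto its image. The work is in checking the hypotheses of each.

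\emph{Compactness.} The family $\mathscr{L}^n_{R,C}$ is uniformly bounded by $C$ and equicontinuous, since all its members share the Lipschitz constant $R$. As $S^n$ is compact, Arzel\`a--Ascoli shows that $\mathscr{L}^n_{R,C}$ is relatively compact in $C(S^n)$ with the uniform norm. It is also closed, because each defining condition passes to uniform limits:
\begin{itemize}
\item evenness, $f(-x)=f(x)$, is preserved under pointwise limits;
\item the inequality $|f(x)-f(y)|\le R\,d(x,y)$ is preserved under pointwise limits;
\item the bound $|f(x)|\le C$ is preserved under pointwise limits.
\end{itemize}
Hence $\mathscr{L}^n_{R,C}$ is compact.

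\emph{Continuity.} The Radon transform is continuous, indeed Lipschitz, from $(C(S^n),\|\cdot\|_\infty)$ to $(C(\mathrm{Gr}(2,n+1)),\|\cdot\|_\infty)$. Every great circle has length $2\pi$, so
\[
|\hat f(\zeta)-\hat g(\zeta)|\le 2\pi\|f-g\|_\infty \quad \text{for every } \zeta .
\]
I would also check that $\hat f$ really is continuous on $\mathrm{Gr}(2,n+1)$. Parametrize $\zeta$ by an orthonormal frame $(e_1,e_2)$ of $\Pi$, so that $\hat f(\zeta)=\int_0^{2\pi} f(\cos\theta\, e_1+\sin\theta\, e_2)\,d\theta$. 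Frames can be chosen to vary continuously, at least locally, and the integrand is continuous in $(\theta,e_1,e_2)$, so continuity of $\hat f$ follows.

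\emph{Injectivity.} On even functions this is exactly the cited fact from \cite[Theorem 3.1.7]{Helgason1999}. If $f,g\in\mathscr{L}^n_{R,C}$ satisfy $\hat f=\hat g$, then $f-g$ is even and lies in the kernel, which consists of odd maps. A function that is both even and odd is zero, so $f=g$.

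\emph{Conclusion.} The restricted transform is a continuous bijection from the compact space $\mathscr{L}^n_{R,C}$ onto its image, and that image is a Hausdorff subspace of a normed space. A closed subset of $\mathscr{L}^n_{R,C}$ is compact, so its image is compact and therefore closed in the image. The inverse is thus continuous, and the restriction is a homeomorphism onto its image.

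The only step needing care is the appeal to Helgason's theorem. It is stated for continuous functions, which covers our Lipschitz functions, and the kernel description must be applied to the difference $f-g$, which is even. Everything else is routine.
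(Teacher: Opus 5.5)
Your proposal is correct and follows the paper's proof essentially step for step. You use Arzel\`a--Ascoli plus closedness of the defining conditions for compactness, the $2\pi\|f-g\|_\infty$ bound for continuity, and injectivity on even functions via Helgason's kernel description. You then conclude with the compact-to-Hausdorff principle. Your extra checks, namely that $\hat f$ is continuous on $\mathrm{Gr}(2,n+1)$ and that the kernel result is applied to the even difference $f-g$, fill in details the paper leaves implicit.
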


\begin{proof} In the uniform norm, being even, $R$-Lipschitz, and bounded in norm by $C$ are preserved under limits. The compactness of $\mathscr{L}^n_{R,C}$ follows from a standard Arzel\`{a}--Ascoli argument. Since
\[
\| \hat{f} - \hat{g} \|_\infty = \max_{\zeta} \int_\zeta |f(\theta) - g(\theta)| \, d\theta \leq 2\pi \max_{x \in S^n} |f(x) - g(x)| = 2\pi \| f - g \|_\infty,
\]
continuity of the transform is clear. Since the transform is injective on the set of even functions, it is a bijection between $\mathscr{L}^n_{R,C}$ and its image. As a continuous bijection between a compact space and a Hausdorff space, it is a homeomorphism. \end{proof}

\noindent The following is an immediate corollary of Lemma \ref{radon transform}.

\begin{lemma}\label{inverse radon transform}
Let $R$, $C$, and $\varepsilon$ be positive numbers. Then, there exists $\delta > 0$ such that the following holds: if $f \in \mathscr{L}^n_{R,C}$ satisfies $|\int_\zeta f(\theta) d\theta| < \delta$ for every great circle $\zeta$, then $|f(x)| < \varepsilon$ for all $x \in S^n$.
\end{lemma}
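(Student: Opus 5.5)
This follows from Lemma \ref{radon transform} by a compactness argument in $\mathscr{L}^n_{R,C}$. I will argue by contradiction, with the inverse of the Radon transform on $\mathscr{L}^n_{R,C}$ as the key input.

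Suppose that for some $\varepsilon > 0$ no such $\delta$ exists. Then for each $k$ there is $f_k \in \mathscr{L}^n_{R,C}$ with
\[
\Big|\int_\zeta f_k(\theta)\,d\theta\Big| < \frac{1}{k} \quad \text{for every great circle } \zeta,
\]
and a point $x_k \in S^n$ with $|f_k(x_k)| \geq \varepsilon$. In other words, $\|\hat f_k\|_\infty < 1/k$ while $\|f_k\|_\infty \geq \varepsilon$.

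By the compactness part of Lemma \ref{radon transform}, after passing to a subsequence, $f_k \to f$ uniformly for some $f \in \mathscr{L}^n_{R,C}$. Continuity of the transform, via the estimate $\|\hat f_k - \hat f\|_\infty \leq 2\pi\|f_k - f\|_\infty$, gives $\hat f_k \to \hat f$. Since $\hat f_k \to 0$, this forces $\hat f = 0$.

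The zero function lies in $\mathscr{L}^n_{R,C}$ and has transform $0$. Injectivity of the transform on even functions, cited from \cite{Helgason1999}, then gives $f = 0$. On the other hand, uniform convergence gives $\|f\|_\infty = \lim_k \|f_k\|_\infty \geq \varepsilon$, which is a contradiction.

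Equivalently, the inverse of the transform restricted to $\mathscr{L}^n_{R,C}$ is continuous at $\hat 0 = 0$, and is in fact uniformly continuous because its domain, the image of a compact set, is compact. Continuity at $0$ is exactly the statement. There is no real obstacle here: the only point needing care is that the limit $f$ stays in the class $\mathscr{L}^n_{R,C}$, which the compactness in Lemma \ref{radon transform} supplies, so that injectivity can be applied to it.
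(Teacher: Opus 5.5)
Your argument is correct and matches the paper's route. The paper states this lemma as an immediate corollary of Lemma~\ref{radon transform}, via continuity at $\hat{0} = 0$ of the inverse transform on $\mathscr{L}^n_{R,C}$, which is your final paragraph; your compactness-and-contradiction argument just unpacks that continuity.
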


\begin{proof}[Proof of Lemma \ref{nearly_constant_width_lemma}] Let $R$ and $\varepsilon$ be positive. According to Lemma \ref{inverse radon transform}, there exists $0 < \delta \leq \pi$ such that, for all $f \in \mathscr{L}^n_{2R,4R+1}$, $\| f \|_\infty < \varepsilon$ whenever $|\int_\zeta f(\theta)\,d\theta| < 2\delta$ for every great circle $\zeta$ in $S^n$. Let $M^n$ be any closed, convex hypersurface in $\real^{n+1}$ such that $R_c(M) \leq R$ and, for every $\Pi \in \mathrm{Gr}(2,n+1)$, $|L(\partial \phi_\Pi(M)) - \xi| < \delta$. For any $\Pi$, $\phi_\Pi(M)$ is contained in a circle of radius $R_c(M)$, so $L(\partial \phi_\Pi(M)) \leq 2\pi R_c(M)$ and, consequently, $\frac{\xi}{\pi} < 2R_c(M) + 1$. Thus, the function $w(M,\cdot) - \frac{\xi}{\pi}$ is even, $2R_c(M)$-Lipschitz continuous, and bounded above by $4R_c(M) + 1$. It follows that $\| w(M,\cdot) - \frac{\xi}{\pi} \|_\infty < \varepsilon$ whenever $|\int_\zeta [w(M,\theta) - \frac{\xi}{\pi}] \,d\theta| < 2\delta$ for every $\zeta = \Pi \cap S^n$. By Theorem \ref{Crofton's formula},
\[
\frac{1}{2} \Big| \int_\zeta \Big[ w(M,\theta) - \frac{\xi}{\pi} \Big] d\theta \Big| = |\pi \Xi(\partial \phi_\Pi(M)) - \xi| = |L(\partial \phi_\Pi(M)) - \xi|,
\]
proving the result. \end{proof}



\begin{proof}[{\em 4.3.} \textbf{\em Proof of Theorem \ref{mean_width_theorem}}] Let $M^{n}$ be a closed, convex hypersurface in $\real^{n+1}$, $n \geq 2$. Suppose first that $M$ is strictly convex and, therefore, intersects all affine $2$-planes in a nontrivial convex curve, a point, or the empty set. For each $\Pi \in \mathrm{Gr}(2,n+1)$, one may define a sweepout of $M$ in the following way: the image $\phi_{\Pi^\perp}(M)$ of the orthogonal projection of $M$ into $\Pi^\perp$ is topologically an $(n-1)$-ball; map each $x \in \phi_{\Pi^\perp}(M)$ to $\Pi_x \cap M$, where $\Pi_x$ is the $2$-plane through $x$ parallel to $\Pi$. Therefore, 
\begin{align}\label{mean_width_pf_eqn_1}
\displaystyle \beta(M) \leq \max\limits_{\Pi'} L\left( \Pi' \cap M \right) \leq L\left( \partial \phi_{\Pi}(M) \right),
\end{align}
where $\Pi'$ varies over all $2$-planes parallel to $\Pi$. Note that the second inequality in \eqref{mean_width_pf_eqn_1} follows from Corollary \ref{crofton_corollary}. Integrating \eqref{mean_width_pf_eqn_1} over $\mathrm{Gr}(2,n+1)$, drawing on Lemma \ref{integral_geometric_formula}, gives the inequality in \eqref{mean_width_thm_eqn}. Because mean width and the Birkhoff invariant are both continuous with respect to the Hausdorff metric by Lemmas \ref{width_continuity} and \ref{birkhoff continuity}, the inequality in \eqref{mean_width_thm_eqn} follows for all convex hypersurfaces from the strictly convex case and the fact that every closed, convex hypersurface can be approximated arbitrarily closely in the Hausdorff metric by strictly convex hypersurfaces, as shown, for example, in \cite[pp. 158--160]{Schneider1993}. 

To establish the equality condition, let $M^n$ be a closed, convex hypersurface in $\real^{n+1}$ for which $\beta(M) = \pi \Xi(M)$, and let $M_k^n$ be a sequence of strictly convex hypersurfaces converging to $M$ in the Hausdorff metric. Lemma \ref{width_continuity} implies that $w(M_k,\cdot)$ converges to $w(M,\cdot)$ in the uniform norm $\| \cdot \|_{\infty}$, Lemma \ref{radius_continuity} implies that $R_c(M_k) \to R_c(M)$, and Lemma \ref{birkhoff continuity} implies that $\beta(M_k) \to \beta(M)$. For all $\Pi \in \mathrm{Gr}(2,n+1)$, the fact that orthogonal projection does not increase distance implies, as in the proof of Lemma \ref{width_continuity}, that $d_H(\partial \phi_{\Pi}(M_k),\partial \phi_{\Pi}(M)) \leq d_H(M_k,M)$. So, by Lemma \ref{width_continuity}, the function $\Pi \mapsto L(\partial \phi_{\Pi}(M_k))$ converges uniformly to $\Pi \mapsto L(\partial \phi_{\Pi}(M))$ on $\mathrm{Gr}(2,n+1)$. Because $\Xi(M_{k}) \to \Xi(M) = \frac{\beta(M)}{\pi}$, and because $\Xi(M_{k})$ is given by the integral formula in Lemma \ref{integral_geometric_formula}, the uniform convergence of $L(\partial \phi_{\Pi}(M_k))$ to $L(\partial \phi_{\Pi}(M))$, together with \eqref{mean_width_pf_eqn_1} and the convergence of $\beta(M_k)$ to $\beta(M)$, implies that $L(\partial \phi_{\Pi}(M)) = \beta(M)$ for all $\Pi \in \mathrm{Gr}(2,n+1)$. Therefore, by Lemma \ref{nearly_constant_width_lemma}, $M$ has constant width $\frac{\beta(M)}{\pi}$. 

We next show that $M$ has long diametrical slices. This implies by Theorem \ref{slicing property implies round} that $M$ is a round sphere and completes the proof. Let $x$ and $y$ be any pair of diametrically opposed points of $M$ and $\Pi$ any $2$-plane containing $x$ and $y$, let $\varepsilon > 0$, and choose $\delta > 0$ so that the conclusion of Lemma \ref{width stability} holds for $R = 2R_c(M)$ and $\varepsilon$. For all $k$, we have $\beta(M_k) \leq \max_{\Pi'} L(\Pi' \cap M_k)$, where the maximum is taken over all $2$-planes $\Pi'$ parallel to $\Pi$. Because $\beta(M_k) \to \beta(M) = \pi w^{+}(M) = \lim\limits_{k \to \infty} \pi w^{+}(M_{k})$, letting $\Pi_k'$ be a sequence of $2$-planes realizing this maximum, we have that $L(\Pi_k' \cap M_k) > \pi w^{+}(M_k) - \delta$ for all sufficiently large $k$. Therefore, by Lemma \ref{width stability}, $\| w(\Pi'_k \cap M_k, \cdot) - w^{+}(M_k) \|_\infty < \varepsilon$. Because $w^{+}(M_k)$ converges to the constant width $\frac{\beta(M)}{\pi}$ of $M$, and because $\varepsilon$ is arbitrary, this implies the width functions $w(\Pi'_k \cap M_k, \cdot)$ converge uniformly to $\frac{\beta(M)}{\pi}$ and, by Theorem \ref{Crofton's formula}, that $L(\Pi'_k \cap M_k) \to \beta(M)$.

We claim that $d_H(\Pi_k', \Pi) \to 0$ as $k \to \infty$. To show this, let $x_k,y_k \in \Pi_k' \cap M_k$ be points such that $\langle y_k - x_k, \frac{y - x}{|y - x|} \rangle = w \big( \Pi_k' \cap M_k, \frac{y - x}{|y - x|} \big)$. We then have $w \big( \Pi_k' \cap M_k, \frac{y - x}{|y - x|} \big) \leq |y_k - x_k| \leq w^{+}(\Pi_k' \cap M_k)$. Because $w(\Pi'_k \cap M_k, \cdot)$ converges uniformly to $\frac{\beta(M)}{\pi}$, this implies that $|y_k - x_k| \to \frac{\beta(M)}{\pi}$, that $\langle y_k - x_k, \frac{y - x}{|y - x|} \rangle \to \frac{\beta(M)}{\pi}$, and, therefore, that $\frac{y_k - x_k}{|y_k - x_k|} \to \frac{y - x}{|y - x|}$. Suppose $x^*,y^* \in M$ are the limits of any convergent subsequences of $x_k$ and $y_k$, respectively. Since $|y^* - x^*| = \langle y^* - x^*, \frac{y - x}{|y - x|} \rangle = \frac{\beta(M)}{\pi} = w^{+}(M)$ and the diametrical segment in each direction in a hypersurface of constant width is unique, $y^* = y$ and $x^* = x$. Since each $\Pi_k'$ is parallel to $\Pi$, the claim follows. 

Let $M_{\varepsilon}$ be the outer parallel hypersurface to $M$ at distance $\varepsilon > 0$. After identifying $\Pi_k'$ and $\Pi$ via parallel translation, the sequence $\Pi_k' \cap M_k$ of strictly convex plane curves is contained within the closed, convex curve $\partial \phi_{\Pi}(M_{\varepsilon})$ for all sufficiently large $k$. The Arzel\`{a}--Ascoli theorem implies that any subsequence of $\Pi_k' \cap M_k$ contains a convergent subsequence. The limit of any convergent subsequence of $\Pi_k' \cap M_k$ must be a planar convex curve, must be contained within $\Pi$, because $\Pi_k' \to \Pi$, and must be contained within $M$, because $M_k \to M$; it follows that $d_H(\Pi_k' \cap M_k,\Pi \cap M) \to 0$ and, therefore, that $L(\Pi \cap M) = \lim\limits_{k \to \infty} L(\Pi_k' \cap M_k) = \beta(M)$. Because $\Pi \cap M$ was an arbitrary diametrical slice of $M$, this implies that all diametrical slices of $M$ have length $\beta(M) = \pi w^{+}(M)$. \end{proof}


\section{Mean curvature and closed geodesics}
\label{curvature_section}


In this section, we prove Theorem \ref{mean_curvature_theorem} as a special case of a lower bound for a broader class of curvature functionals in Proposition \ref{main_curvature_prop} and Remark \ref{main_thm_remark}. Given an $n$-tuple of real numbers $a_1,a_2,\dots,a_n$, denote by $S_{k}(a)$ the $k$-th elementary symmetric function of $a_1,a_2,\dots,a_n$. That is,
\begin{align}\label{sym_fcn}
\displaystyle S_{k}(a) = \frac{\sum\limits_{1 \leq i_{1} < i_{2} < \cdots < i_{k} \leq n}a_{i_{1}}a_{i_{2}}\cdots a_{i_{k}}}{\binom{n}{k}}. 
\end{align}
\textbf{Maclaurin's inequalities} \cite{HardyLittlewoodPolya1952} state that, for $n$-tuples of nonnegative real numbers, 
\begin{align}
\displaystyle S_{1}(a) \geq \sqrt{S_{2}(a)} \geq \sqrt[3]{S_{3}(a)} \geq \cdots \geq \sqrt[n]{S_{n}(a)},  
\end{align}
with equality in any of these inequalities if and only if $a_{1} = a_{2} = \dots = a_{n}$. A basic integral-geometric identity \cite{Schneider1993} gives the following expression for the mean width of a smooth, closed, convex hypersurface $M^{n}$ in $\real^{n+1}$ with principal curvatures $\lambda_{1}, \lambda_{2}, \dots, \lambda_{n}$: 
\begin{align}\label{width_curvature_identity}
\displaystyle \Xi(M) = \frac{2}{\sigma_{n}} \int_{M} S_{n-1}(\lambda) \, dA. 
\end{align}

\begin{proposition}\label{main_curvature_prop}
Let $M^{n}$ be a smooth, closed, convex hypersurface in $\real^{n+1}$, $n \geq 2$, with principal curvatures $\lambda_{1}, \lambda_{2}, \dots, \lambda_{n}$, and let $\Lambda(M)$ be the length of a shortest nontrivial closed geodesic in $M$. Then, for $1 \leq m \leq n-1$, 
\begin{align}\label{mcp_eqn}
\displaystyle \int_{M} S_{m}(\lambda)^{\frac{n-1}{m}} dA \geq \frac{\sigma_n}{2\pi} \Lambda(M).
\end{align}
Equality holds if and only if $M$ is a round sphere. 
\end{proposition}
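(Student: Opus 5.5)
The plan is to chain Theorem \ref{mean_width_theorem}, Birkhoff's theorem, the curvature formula \eqref{width_curvature_identity} for mean width, and Maclaurin's inequalities. First, by Birkhoff's argument, a smooth, closed, convex hypersurface carries a closed geodesic of length $\beta(M)$. This gives $\Lambda(M) \leq \beta(M)$. Combining this with Theorem \ref{mean_width_theorem} and \eqref{width_curvature_identity} gives
\begin{align}\label{mcp_pf_eqn_1}
\displaystyle \frac{2}{\sigma_n}\int_M S_{n-1}(\lambda)\,dA = \Xi(M) \geq \frac{1}{\pi}\beta(M) \geq \frac{1}{\pi}\Lambda(M),
\end{align}
that is, $\int_M S_{n-1}(\lambda)\,dA \geq \frac{\sigma_n}{2\pi}\Lambda(M)$.

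Next, since $M$ is convex, all principal curvatures are nonnegative, so Maclaurin's inequalities apply pointwise. For $1 \leq m \leq n-1$ they give $S_m(\lambda)^{1/m} \geq S_{n-1}(\lambda)^{1/(n-1)}$. Raising both sides to the power $n-1$ yields $S_m(\lambda)^{\frac{n-1}{m}} \geq S_{n-1}(\lambda)$ at every point. Integrating over $M$ and using \eqref{mcp_pf_eqn_1} proves \eqref{mcp_eqn}. Theorem \ref{mean_curvature_theorem} is the case $m=1$, since $S_1(\lambda) = \mathcal{H}$.

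For the equality case, suppose equality holds in \eqref{mcp_eqn}. Then every inequality in the chain is an equality; in particular $\Xi(M) = \frac{1}{\pi}\beta(M)$. By the equality clause of Theorem \ref{mean_width_theorem}, $M$ is a round sphere. Conversely, on a round sphere of radius $r$ all the terms can be computed directly. Here $\Lambda = 2\pi r$, $S_m(\lambda) = r^{-m}$, and the integral equals $r^{-(n-1)}\sigma_n r^n = \sigma_n r$, which equals $\frac{\sigma_n}{2\pi}\cdot 2\pi r$. One never needs the Maclaurin equality case: sharpness of the mean-width step alone forces roundness.

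The main point of care is the first step, $\Lambda(M) \leq \beta(M)$. One must make sure that the Birkhoff min-max geodesic, whose length is the Birkhoff invariant as in Definition \ref{birkhoff invariant}, is a nontrivial closed geodesic. This requires $\beta(M) > 0$, which holds because short loops on a compact smooth surface are contractible through short loops, so any sweepout must contain a long curve. I would cite \cite{Birkhoff1917,Croke1988} for this rather than reprove it. Everything else is formal.
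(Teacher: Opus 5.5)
Your proof is correct and follows the paper's route: $\Lambda(M)\le\beta(M)$ together with Theorem \ref{mean_width_theorem} and \eqref{width_curvature_identity} gives the case $m=n-1$, Maclaurin's inequalities give the remaining cases, and equality is forced through the equality clause of Theorem \ref{mean_width_theorem} without needing the Maclaurin equality case. Your explicit check that round spheres attain equality is a small addition that the paper leaves implicit; one minor slip is that your positivity remark for $\beta(M)$ says ``surface'' where $M$ is an $n$-dimensional hypersurface.
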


\begin{proof} Theorem \ref{mean_width_theorem} and the fact that $\Lambda(M) \leq \beta(M)$ imply that
\begin{align}\label{mcp_pf_eqn_1}
\displaystyle \Xi(M) \geq \frac{1}{\pi} \Lambda(M). 
\end{align}
For $m = n-1$, the inequality in \eqref{mcp_eqn} follows immediately from \eqref{mcp_pf_eqn_1} and \eqref{width_curvature_identity}. For $n \geq 3$ and $n-1 > m \geq 1$, the inequality follows from the result for $n-1$ and Maclaurin's inequalities. Equality requires equality in Theorem \ref{mean_width_theorem}, which implies that $M$ is round. \end{proof}

\begin{remark}\label{main_thm_remark}
Theorem \ref{mean_curvature_theorem} is the case $m = 1$ of Proposition \ref{main_curvature_prop}. The scalar curvature of $M$ is given by $S_{2}(\lambda)$, so, for $n \geq 3$, the case $m = 2$ of Proposition \ref{main_curvature_prop} gives a sharp lower bound for the integral of the scalar curvature of a convex hypersurface. For $n \geq 3$ and $n-1 > m \geq 1$, equality in Proposition \ref{main_curvature_prop} implies, by the equality conditions in Maclaurin's inequalities \eqref{sym_fcn}, that $M$ is totally umbilic, i.e., that the principal curvatures of $M$ at each point coincide. This implies that $M$ is round, as discussed in Remark \ref{blaschke_remark}. In this sense, the characterization of equality in Proposition \ref{main_curvature_prop} in these cases does not depend on the characterization of equality in Theorem \ref{mean_width_theorem}. 

Let $\mathrm{Gr}(n,n+1)$ denote the Grassmannian of hyperplanes $\Pi^n$ in $\real^{n+1}$. The mean curvature of a smooth convex hypersurface $M^n$ in $\real^{n+1}$ can be calculated by integrating the $(n-1)$-dimensional measure of $\partial \phi_{\Pi^n}(M)$ over $\mathrm{Gr}(n,n+1)$, generalizing the formula for the mean curvature of a convex surface given by combining Lemma \ref{integral_geometric_formula} with \eqref{width_curvature_identity}, cf. \cite{Schneider1993}. The proof of the inequality in Theorem \ref{mean_width_theorem} can therefore be adapted to give a lower bound for $\int_M \mathcal{H} \ dA$ in terms of the $(n-1)$-dimensional min-max area of a sweepout of $M$. This min-max invariant is not known to coincide with the area of a minimal hypersurface in $M$ for $n\geq 4$, but Smith \cite{Smith1983} showed that every Riemannian metric on the $3$-sphere has an embedded minimal $2$-sphere given by this min-max construction. Smith's result, together with the argument above, therefore gives a sharp lower bound for the total mean curvature of a smooth, convex $3$-sphere $M$ in $\real^{4}$ in terms of the least area of a minimal $2$-sphere in $M$. 

The Gauss-Kronecker curvature $S_{n}(\lambda)$ is the determinant of the differential of the Gauss map. The integral of $S_{n}(\lambda)$ is therefore equal to $\sigma_{n}$ and does not admit a lower bound in terms of $\Lambda(M)$ or any other geometric invariant. However, applying Maclaurin's inequalities to $\int_{M} S_{n}(\lambda) \ dA$, as in the proof of Proposition \ref{main_curvature_prop}, gives a family of sharp lower bounds for the integrals of scale-invariant powers of $S_{1}(\lambda),S_{2}(\lambda),\dots,S_{n-1}(\lambda)$, including the following special case of the results of Chen and Willmore cited above \cite{Willmore1965,Chen1971b}.
\end{remark}

\begin{theorem}[{Chen--Willmore, see also \cite[Corollary 1.2]{CederbaumMiehe2025}}]\label{chen_willmore}
If $M^n$ is a smooth, closed, convex hypersurface in $\real^{n+1}$, $n \geq 2$, then $\int_M \mathcal{H}^n dA \geq \sigma_n$, with equality if and only if $M$ is a round sphere.
\end{theorem}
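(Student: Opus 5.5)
We will prove Theorem \ref{chen_willmore} through the Maclaurin chain applied to the Gauss--Kronecker curvature, as the preceding remark indicates. The argument rests on a single identity. Since $M$ is smooth, closed and convex, the Gauss map $N : M \to S^n$ is a degree-one map. Its Jacobian is $S_n(\lambda) = \lambda_1 \cdots \lambda_n$, and this is nonnegative everywhere. It follows that
\[
\int_M S_n(\lambda)\, dA = \sigma_n .
\]
Convexity gives $\lambda_i \geq 0$ at every point, so Maclaurin's inequalities apply pointwise. The case $m=1$ yields $\mathcal{H} = S_1(\lambda) \geq S_n(\lambda)^{1/n}$, that is, $\mathcal{H}^n \geq S_n(\lambda)$. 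Integrating over $M$ gives $\int_M \mathcal{H}^n\, dA \geq \sigma_n$.

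For the equality case, suppose $\int_M \mathcal{H}^n\, dA = \sigma_n$. The integrand $\mathcal{H}^n - S_n(\lambda)$ is continuous and nonnegative, and its integral is zero, so it vanishes identically. The equality condition in Maclaurin's inequalities then forces $\lambda_1 = \cdots = \lambda_n$ at every point, so $M$ is totally umbilic. We then invoke the classical fact, mentioned in Remark \ref{blaschke_remark}, that a smooth, connected, closed, totally umbilic hypersurface in $\real^{n+1}$ is a round sphere.

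To keep the proof self-contained, we would sketch that fact as follows. Write $\lambda_i = \lambda$ for the common principal curvature, so that $dN = \lambda\, \mathrm{Id}$ on $TM$. Differentiating and using the symmetry of second derivatives gives $d\lambda \wedge dx = 0$ on $TM$. Since $n \geq 2$, this forces $\lambda$ to be locally constant, hence constant on the connected hypersurface $M$. The constant cannot be zero, because $\int_M S_n(\lambda)\, dA = \sigma_n > 0$. Therefore $x - \lambda^{-1} N$ is constant, so $M$ lies on a sphere of radius $1/\lambda$, and being closed it is that whole sphere. Conversely, a round sphere of radius $r$ has $\mathcal{H} = 1/r$ and area $\sigma_n r^n$, so it attains equality.

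No step is a genuine obstacle. The only place needing a little care is the sign and degree in $\int_M S_n(\lambda)\, dA = \sigma_n$. Here we use that for a closed convex hypersurface the Gauss map is a bijection onto $S^n$ away from a measure-zero set, or alternatively that its degree equals half the Euler characteristic of $S^n$, namely $1$. Together with $S_n(\lambda) \geq 0$, this justifies the area formula without absolute values.
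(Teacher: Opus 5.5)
Your proof is correct and follows the paper's argument: $\int_M S_n(\lambda)\,dA = \sigma_n$ via the degree-one Gauss map, then Maclaurin's (here AM--GM) inequality $\mathcal{H}^n \geq S_n(\lambda)$, with equality forcing total umbilicity and hence a round sphere; you additionally sketch the umbilic rigidity that the paper only cites. One small correction to your alternative justification: Hopf's formula $\deg N = \chi(M)/2$ holds only for even $n$, since half of $\chi(S^n)$ is $0$ when $n$ is odd. In general $\deg N = \chi(\mathcal{K}) = 1$ for the convex body $\mathcal{K}$ bounded by $M$, although your primary argument, bijectivity of $N$ off a null set, already suffices.
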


\noindent Theorem \ref{mean_curvature_theorem} also gives the following relationships between area, Willmore energy, and the length of the shortest closed geodesic in convex surfaces.

\begin{proposition}\label{willmore_prop}
Let $M^2$ be a smooth, closed, convex surface of area $|M|$ in $\real^{3}$ and $\mathcal{W}(M) = \int_{M} (\lambda_{1}^{2} + \lambda_{2}^{2}) \ dA$ its {\bf Willmore energy}. Then,
\begin{flalign*}
    & & 8 \Lambda(M)^{2} &\leq |M| \mathcal{W}(M) & \\[ -0.5ex ]
    \text{and} & & & & \\[-0.5ex]
    & & 16 \Lambda(M)^{2} &\leq |M| \left[ \mathcal{W}(M) + 8\pi \right]. &
\end{flalign*}
Equality in either inequality holds if and only if $M$ is a round sphere. 
\end{proposition}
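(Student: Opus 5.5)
The plan is to derive both inequalities from the case $n=2$ of Theorem \ref{mean_curvature_theorem}, using the Cauchy--Schwarz inequality and the Gauss--Bonnet theorem. For $n = 2$ we have $\sigma_2 = 4\pi$ and $\mathcal{H} = \frac{1}{2}(\lambda_1+\lambda_2)$. Theorem \ref{mean_curvature_theorem} then reads $\int_M \mathcal{H}\,dA \geq 2\Lambda(M)$. Since $M$ is convex, $\mathcal{H} \geq 0$, so squaring preserves the inequality. Cauchy--Schwarz then gives
\[
4\Lambda(M)^2 \leq \Big(\int_M \mathcal{H}\,dA\Big)^2 \leq |M| \int_M \mathcal{H}^2\,dA .
\]

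Next I express $\int_M \mathcal{H}^2\,dA$ in terms of the Willmore energy, in two ways. Expanding gives $\mathcal{H}^2 = \frac{1}{4}(\lambda_1^2+\lambda_2^2) + \frac{1}{2}\lambda_1\lambda_2$. The surface $M$ is a topological sphere, so Gauss--Bonnet gives $\int_M \lambda_1\lambda_2\,dA = 4\pi$; this is also the $n=2$ case of the remark that $\int_M S_n(\lambda)\,dA = \sigma_n$. Hence $\int_M \mathcal{H}^2\,dA = \frac{1}{4}\left[\mathcal{W}(M)+8\pi\right]$. Substituting this into the display yields $16\Lambda(M)^2 \leq |M|\left[\mathcal{W}(M)+8\pi\right]$. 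Alternatively, the pointwise bound $\mathcal{H}^2 \leq \frac{1}{2}(\lambda_1^2+\lambda_2^2)$, which is just $(\lambda_1-\lambda_2)^2 \geq 0$, gives $\int_M \mathcal{H}^2\,dA \leq \frac{1}{2}\mathcal{W}(M)$. Substituting that instead yields $8\Lambda(M)^2 \leq |M|\,\mathcal{W}(M)$.

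For the equality cases, note that each inequality is obtained from a chain whose first step is Theorem \ref{mean_curvature_theorem}. Equality in either final inequality therefore forces equality in $\int_M \mathcal{H}\,dA \geq 2\Lambda(M)$, and by the equality clause of Theorem \ref{mean_curvature_theorem}, $M$ is a round sphere.

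Conversely, I check directly that a round sphere of radius $r$ attains equality. Here $\Lambda = 2\pi r$, $|M| = 4\pi r^2$ and $\mathcal{W} = 8\pi$. Then $8\Lambda^2 = 32\pi^2 r^2 = |M|\mathcal{W}$, and $16\Lambda^2 = 64\pi^2r^2 = |M|(\mathcal{W}+8\pi)$.

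I do not expect a real obstacle, since all the depth is in Theorem \ref{mean_curvature_theorem}. The only points needing care are the sign $\mathcal{H}\ge 0$ before squaring, and the observation that equality in a chain forces equality in its first link.
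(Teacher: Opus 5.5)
Your proof is correct and is essentially the paper's argument. Both start from the $n=2$ case $\int_M \mathcal{H}\,dA \geq 2\Lambda(M)$ of Theorem \ref{mean_curvature_theorem}, get the second inequality from Cauchy--Schwarz and Gauss--Bonnet, get the first from the pointwise bound $(\lambda_1-\lambda_2)^2 \geq 0$ combined with Cauchy--Schwarz (the paper applies the two steps in the opposite order), and reduce equality to the equality case of that theorem. One small correction: squaring $2\Lambda(M) \leq \int_M \mathcal{H}\,dA$ is justified by $\Lambda(M) \geq 0$, not by $\mathcal{H} \geq 0$.
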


\begin{proof}
The case $n=2$ of Theorem \ref{mean_curvature_theorem} implies that
\begin{equation}\label{willmore_prop_pf_eqn_1}
\displaystyle 4\Lambda(M) \leq \int_{M} (\lambda_{1} + \lambda_{2}) \ dA.
\end{equation}	
Applying the Cauchy--Schwarz inequality to \eqref{willmore_prop_pf_eqn_1} pointwise implies that $2\sqrt{2}\Lambda(M) \leq \int_{M} \sqrt{\lambda_{1}^{2} + \lambda_{2}^{2}} \ dA$, and another application of Cauchy--Schwarz gives the first inequality. Applying Cauchy--Schwarz directly to the integral in \eqref{willmore_prop_pf_eqn_1} gives $16\Lambda(M)^{2} \leq |M| \int_{M} (\lambda_{1} + \lambda_{2})^{2} dA $. A standard Gauss--Bonnet calculation proves the second inequality. Equality in either requires equality in Theorem \ref{mean_curvature_theorem}.
\end{proof}

\noindent The first inequality in Proposition \ref{willmore_prop} would imply the second if the area of the round sphere were minimal in terms of the length of the shortest closed geodesic, but this is known not to be the case. Conjecturally, the optimal metric is singular, given by doubling a flat equilateral triangle, cf. \cite{Croke1988}. Without assuming convexity, M\"{u}ller, Rupp, and Scharrer \cite[Theorem 1.1]{MullerRuppScharrer2026} recently proved a universal inequality relating $|M|$, $\mathcal{W}(M)$, and $\Lambda(M)$ for immersed $2$-spheres in $\real^n$ satisfying $\mathcal{W}(M) < 6\pi$.

The following inequality of Croke can also be derived from Proposition \ref{main_curvature_prop}.

\begin{theorem}[{\cite[Theorem 1.7]{Croke1982}}]\label{croke_theorem}
Let $M$ be a smooth, closed, convex hypersurface in $\real^{n+1}$, $n \geq 2$, and $S_{n-1}(\lambda^{2})$ the $(n-1)$-st symmetric function of the squares of its principal curvatures. Then,
\begin{align}\label{croke_thm_eqn}
\displaystyle \int_{M} \sqrt{S_{n-1}(\lambda^{2})} \, dA \geq \frac{\sigma_{n}}{2\pi} \Lambda(M).
\end{align}
Equality holds if and only if $M$ is a round sphere.
\end{theorem}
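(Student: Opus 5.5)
We derive Theorem \ref{croke_theorem} from the case $m = n-1$ of Proposition \ref{main_curvature_prop}, which reads
\[
\int_M S_{n-1}(\lambda)\, dA \geq \frac{\sigma_n}{2\pi}\Lambda(M).
\]
It therefore suffices to prove the pointwise inequality
\[
S_{n-1}(\lambda) \leq \sqrt{S_{n-1}(\lambda^2)}
\]
for nonnegative principal curvatures, and to identify when equality holds.

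\emph{Reduction to the case with no zero curvature.} Write
\[
\binom{n}{n-1} S_{n-1}(\lambda) = \sum_{i} \prod_{j \neq i} \lambda_j = \sum_i \mu_i, \qquad \mu_i := \prod_{j\neq i}\lambda_j \geq 0.
\]
Then
\[
\binom{n}{n-1} S_{n-1}(\lambda^2) = \sum_i \mu_i^2.
\]
So the desired inequality is
\[
\frac{1}{n}\sum_i \mu_i \leq \sqrt{\frac{1}{n}\sum_i \mu_i^2}.
\]
This is exactly the arithmetic--quadratic mean inequality, that is, Cauchy--Schwarz applied to $(\mu_1,\dots,\mu_n)$ and $(1,\dots,1)$. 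It holds pointwise on $M$, and integrating it gives \eqref{croke_thm_eqn}.

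\emph{Equality.} If equality holds in \eqref{croke_thm_eqn}, then both inequalities in the chain
\[
\frac{\sigma_n}{2\pi}\Lambda(M) \leq \int_M S_{n-1}(\lambda)\, dA \leq \int_M \sqrt{S_{n-1}(\lambda^2)}\, dA
\]
must be equalities. Equality in the first step is equality in Proposition \ref{main_curvature_prop} with $m = n-1$, so $M$ is a round sphere. Conversely, on a round sphere of radius $r$ all $\lambda_i = 1/r$, so all $\mu_i$ are equal and the second inequality is an equality. Moreover $\Lambda(M) = 2\pi r$ and $\int_M S_{n-1}(\lambda)\, dA = \sigma_n r^{n-1}\cdot r^{-(n-1)}\cdot r = \sigma_n r$, so equality holds throughout.

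\emph{Where care is needed.} The one step to check is the normalization in \eqref{sym_fcn}. For $S_{n-1}$ the binomial coefficient is $n$ for both $\lambda$ and $\lambda^2$, so the inequality takes the averaged form above, with the same factor $1/n$ inside and outside the square root. There is no analytic obstacle: the substance is carried entirely by Proposition \ref{main_curvature_prop}, and hence by Theorem \ref{mean_width_theorem}.
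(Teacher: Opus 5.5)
Your proposal is correct and is essentially the paper's proof: the paper also combines the case $m = n-1$ of Proposition~\ref{main_curvature_prop} with the pointwise Cauchy--Schwarz inequality $S_{n-1}(\lambda) \leq \sqrt{S_{n-1}(\lambda^{2})}$, and it takes the equality case from that proposition. You additionally write out the Cauchy--Schwarz step using $\mu_i = \prod_{j \neq i} \lambda_j$ and check directly that round spheres attain equality. One small point: your heading ``reduction to the case with no zero curvature'' is a misnomer, since no such reduction is performed or needed.
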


\begin{proof} The result follows from the case $m = n-1$ of Proposition \ref{main_curvature_prop} and the pointwise inequality $\sqrt{S_{n-1}(\lambda^{2})} \geq S_{n-1}(\lambda)$, which follows from the Cauchy--Schwarz inequality. \end{proof}

\noindent Conversely, Croke notes in \cite[Corollary 1.8(b)]{Croke1982} that Theorem \ref{croke_theorem} implies a non-sharp version of Proposition \ref{main_curvature_prop}, with a constant off by a factor of $\sqrt{n}$ from the optimal constant. The characterization of equality in the proof of Theorem \ref{croke_theorem} in \cite{Croke1982} is based on the proof of the Blaschke conjecture, by an argument similar to the one described in Remark \ref{blaschke_remark}. 


\bibliographystyle{alpha}
\bibliography{bibliography}


\end{document}